\documentclass{article}
\usepackage[T1]{fontenc}
\usepackage{hyperref}
\usepackage{mathrsfs} 
\usepackage{numprint}
\usepackage{amsthm}
\usepackage{amsmath} 
\usepackage{mathrsfs} 
\usepackage{amssymb} 
\usepackage{amsfonts} 
\usepackage{multicol}
\usepackage{pdfpages}
\usepackage{a4wide}

\usepackage{pythontex}
\usepackage{graphicx} 
\usepackage{wrapfig} 
\usepackage{stmaryrd}
\usepackage{enumitem} 
\usepackage{comment}

\usepackage{multicol}
\usepackage{tikz}
\usepackage{graphicx}
\usepackage{colortbl}
\usepackage{array}
\usepackage[absolute]{textpos} 
\usepackage{color} 
\definecolor{Prune}{RGB}{99,0,60}
\usepackage{mdframed}
\usepackage{fancyhdr}
\usepackage{geometry}
\usepackage{ragged2e}

\newcommand{\R}[0]{\mathbb{R}}

\newcommand{\N}[0]{\mathbb{N}}
\newcommand{\Z}[0]{\mathbb{Z}}
\newcommand{\Q}[0]{\mathbb{Q}}
\renewcommand{\P}[0]{\mathbb{P}}
\newcommand{\Nx}[0]{\mathbb{N}\setminus \{0 \}}

\newcommand{\RR}[0]{\mathcal{R}}

\newcommand{\Ind}[0]{\text{Ind}}

\newcommand{\DD}[0]{\mathcal{D}}

\renewcommand{\mod}[0]{\text{ mod }}

\newcommand{\II}[0]{\mathcal{I}}

\newcommand{\floor}[1]{\left\lfloor #1 \right\rfloor}

\let\oldforall\forall
\renewcommand{\forall}{\oldforall \, }

\newcommand{\tir}[0]{\text{-}}

\newcommand{\Span}[0]{\text{Span}}
\newcommand{\rank}[0]{\text{rank}}

\setlength{\parindent}{0pt}

\theoremstyle{definition}
\newtheorem{theo}{Theorem}[section]
\newtheorem{req}[theo]{Remark}

\newtheorem{lem}[theo]{Lemma}
\newtheorem{defi}[theo]{Definition}

\newcounter{constante}
\newcommand{\cons}[0]{\refstepcounter{constante}c_{\theconstante}}

\makeatletter
\renewcommand\theequation%
{\thesection.\arabic{equation}}
\@addtoreset{equation}{section}
\makeatother

\usepackage{dsfont}

\title{\textbf{Construction of subspaces with known Diophantine exponents for the last angle}}
\author{Gaétan GUILLOT}
\date{}

\begin{document}
\newgeometry{top=2.25 cm, bottom=2.25cm, left=2.6cm, right=2.6cm}
\maketitle
\begin{abstract}
 Schmidt generalized in 1967 the theory of classical Diophantine approximation to subspaces of $\R^n$. We consider Diophantine exponents for linear subspaces of $\R^n$ which generalize the irrationality measure for real numbers. Using geometry of numbers, we construct subspaces of $\R^n$ for which we are able to compute the associated exponents for the last angle.
\end{abstract}

\maketitle

\section{Introduction}

Classical Diophantine approximation deals with how closely real numbers (or points in $\R^n$) can be approximated by rational numbers (or rational points). In 1967, Schmidt \cite{Schmidt} proposed a broader version of this problem, focusing on the approximation of subspaces of $\R^n$ by rational subspaces. In this context, we will briefly outline the key concepts needed for this study, according to the definitions and notation from \cite{Schmidt}, \cite{Joseph-these}, \cite{joseph_exposants}, and \cite{joseph_spectre}. The result presented in this article comes from the Ph.D thesis of the author, see \cite[Chapter 9]{Guillot_these}.

\bigskip 

A subspace of $\R^n$ is rational if it admits a basis of vectors in $\Q^n$. The set of all rational subspaces of dimension $e$ in $\R^n$ is denoted by $\RR_n(e)$. To such a rational subspace $B$, we can associate a point 
 $\eta = (\eta_1, \ldots, \eta_N) \in \P^N(\R)$ with $N = \binom{n}{e}$ known as the Grassmann (or Plücker) coordinates of $B$. We can select a representative vector $\eta$ with coprime integer coordinates and we define
$ H(B) = \| \eta \|$
where $\| \cdot \|$ denotes the Euclidean norm on $\R^N$. Note that if $X_1, \ldots, X_e$ is a $\Z$-basis of $B \cap \Z^n$, then $H(B) = \| X_1 \wedge \ldots \wedge X_e \|$. Further details on the height can be found in \cite{Schmidt} and \cite{Schmidt_book}.
Let us fix $n \in \Nx$. For $d,e \in \llbracket 1, n \rrbracket^2$ such that $d+ e \leq n$ and $j \in \llbracket 1, 
 \min(d,e) \rrbracket $ we say that a subspace $A$ of dimension $d$ of $\R^n$ is $(e,j)\tir$irrational if 
$\forall B \in \RR_n(e), \quad \dim( A \cap B) < j.$
We denote by $\II_n(d,e)_j$ the set of all $(e,j)\tir$irrational subspaces $A$ of dimension $d$ of $\R^n$.

We now introduce the concept of proximity between two subspaces. Let $ X, Y \in \R^n \setminus \{ 0 \} $, and define
\[ \omega ( X,Y) = \frac{\| X \wedge Y \|}{ \|X \| \cdot\|Y \| }, \]
where $ X \wedge Y $ represents the exterior product of $X$ and $Y$. Geometrically, $ \omega(X,Y) $ denotes the absolute value of the sine of the angle between $X$ and $Y$. Consider two subspaces $A$ and $B$ of $\R^n$ with dimensions $d$ and $e$ respectively. As in \cite{Schmidt}, we construct by induction $t = \min(d,e)$ angles between $A$ and $B$. 
Let us define 
\[ \psi_1(A,B) = \min\limits_{\substack{X \in A \setminus \{ 0 \} \\ Y \in B \setminus \{ 0 \}}} \omega(X,Y), \]
and choose $(X_1, Y_1) \in A \times B$ be such that $ \omega(X_1, Y_1) = \psi_1(A,B) $. Assume that $ \psi_1(A,B), \ldots, \psi_j(A,B) $ and $ (X_1, Y_1), \ldots, (X_j, Y_j) $ have been constructed for $j \in \llbracket 1, t-1 \rrbracket$. Let $A_j$ and $B_j$ be respectively the orthogonal complements of $ \text{Span} ( X_1, \ldots, X_j) $ in $A$ and $ \text{Span} (Y_1, \ldots, Y_j) $ in $B$. 

We define 
\[ \psi_{j+1}(A,B) = \min\limits_{\substack{X \in A_j \setminus \{ 0 \} \\ Y \in B_j \setminus \{ 0 \}}} \omega(X,Y), \]
and let $(X_{j+1}, Y_{j+1}) \in A \times B$ such that $ \omega(X_{j+1}, Y_{j+1}) = \psi_{j+1}(A,B) $.

We now have all the tools to define the Diophantine exponents studied in this paper.
\begin{defi}
 Let $(d,e) \in \llbracket 1,n-1 \rrbracket^2$ be such that $d+e \leq n$ and $j \in \llbracket 1, \min(d,e) \rrbracket $, and $A \in \II_n(d,e)_j$. We define $\mu_n(A|e)_j$ as the supremum of the set of all $\mu > 0$ such that there exist infinitely many $B \in \RR_n(e)$ such that
 $$ \psi_j(A,B) \leq H(B)^{-\mu}.$$
\end{defi}

The angle corresponding with $j = \min(d,e)$ is the most natural to study. Indeed, $\psi_{\min(d, e)}$ is "almost" a distance in the sense:
\begin{align*}
\psi_{\min(d, e)}(A, B) = 0 \Longleftrightarrow A \subset B \text{ or } B \subset A.
\end{align*}
In particular, if $d = e$, then $\psi_d$ is a distance on the Grassmannian of $d$-dimensional vector subspaces of $\R^n$. In \cite{Saxce}, de Saxcé describes the image of $\II_n(d,e)_{\min(d,e)}$ by $\mu_n(\cdot|e)_{\min(d,e)}$. In this article we study the joint spectrum associated to the last angle, that is to say the image of the tuple of functions $(\mu_n(\cdot|1)_{\min(d,1)}, \ldots, \mu_n(\cdot|n-d)_{\min(d,n-d)})$ on $\bigcap\limits_{e =1}^{n-d} \II_n(d,e)_{\min(d,e)}$. We prove here the following theorem. 

\begin{theo}\label{8theo_dernier_angle}
Let $(d,q) \in (\Nx)^2$. We set $n = (q+1)d$. There exists an explicit constant $C_d$ such that for any $\alpha \geq C_d$, there exists a subspace $A$ of $\R^n$ of dimension $d$ such that $A \in \bigcap\limits_{e =1}^{n-d} \II_n(d,e)_{\min(d,e)}$ and
\begin{align*}
\forall e \in \llbracket d, n-d \rrbracket, \quad &\mu_n(A| e)_d = \frac{\alpha^{q_e + 1}}{r_e + (d-r_e)\alpha}, \\
\forall e \in \llbracket 1, d-1 \rrbracket, \quad &\mu_n(A| e)_e = \frac{\alpha}{r_e} = \frac{\alpha}{e},
\end{align*}
where $q_e$ and $r_e$ are the quotient and the remainder of the Euclidean division of $e$ by $d$.
\end{theo}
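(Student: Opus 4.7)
The plan is to construct $A$ explicitly using a parameter built by a
Liouville-type iterative construction, and then to derive the two
formulas of the theorem by matching upper bounds from explicit rational
approximating sequences with lower bounds from the geometry of numbers.

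\textbf{Construction.} Identify $\R^n$ with $(\R^d)^{q+1}$ and split
coordinates into $q+1$ blocks of size $d$. A natural candidate is
$A = \{(x,\theta x,\theta^2 x,\ldots,\theta^q x) : x\in \R^d\}$ where
$\theta\in \R$ is a transcendental real number chosen inductively so as
to admit a tower of rational approximations $(p_k/q_k)_k$ satisfying
$|\theta-p_k/q_k|\asymp q_k^{-\alpha}$ together with sharp control on
the polynomial quantities $|P(\theta)|$ for $P\in\Z[X]$ of degree
$\leq q$. Transcendence of $\theta$ yields
$A\in\bigcap_e \II_n(d,e)_{\min(d,e)}$ automatically.

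\textbf{Upper bounds.} For each $e$ I would exhibit an explicit sequence
$(B_k^{(e)})_k$ of rational subspaces of dimension $e$. For $e<d$,
$B_k^{(e)}$ is spanned by rational approximations of the $e$ vectors
$(e_j,\theta e_j,\ldots,\theta^q e_j)$ ($j=1,\ldots,e$), which yields
$H(B_k^{(e)})\asymp q_k^e$ and $\psi_e(A,B_k^{(e)})\asymp q_k^{-\alpha}$,
hence $\mu_n(A|e)_e\geq \alpha/e$. For $e\geq d$ with $e=q_e d+r_e$,
$B_k^{(e)}$ is built by a tower-like construction combining rational
approximations of the powers $\theta,\theta^2,\ldots,\theta^{q_e+1}$
across the first $q_e+2$ blocks with $r_e$ auxiliary rational directions,
arranged so that
\[
\log H(B_k^{(e)})\asymp (r_e+(d-r_e)\alpha)\log q_k, \qquad
\log \psi_d(A,B_k^{(e)})\asymp -\alpha^{q_e+1}\log q_k.
\]
Taking the ratio gives the announced lower bound on $\mu_n(A|e)_d$.
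The exponent $\alpha^{q_e+1}$ reflects the polynomial approximation
$|P(\theta)|$ of degree $q_e+1$ produced by the tower, while the
weighted denominator $r_e+(d-r_e)\alpha$ comes from mixing blocks of
different heights in the wedge-product computation.

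\textbf{Lower bounds.} This is the main obstacle. Given any rational
$B$ of dimension $e$ with $\psi_{\min(d,e)}(A,B)\leq H(B)^{-\mu}$,
I would apply Minkowski's second theorem to a suitably skewed convex
body encoding the block-by-block proximity of $B$ to $A$. The successive
minima furnish integer vectors whose block-wise coordinates realise
rational approximations to polynomials in $\theta$, and the Liouville-type
bounds built into the construction force $\mu$ to be at most the
announced value. The delicate step is combinatorial: $B$ may straddle
the $q+1$ blocks in many inequivalent ways, and one must verify that
the splitting used in the upper bound construction is optimal. The
constant $C_d$ of the statement is chosen exactly so that, for
$\alpha\geq C_d$, every alternative splitting yields a strictly smaller
exponent; combined with the upper bounds this yields the two announced
equalities.
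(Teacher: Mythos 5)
Your overall strategy (explicit Liouville-type construction, explicit approximating sequences for the lower bounds on the exponents, geometry of numbers for the upper bounds) is the same as the paper's, but your construction is different and the quantitative heart of the argument is missing, and part of it would in fact fail. Taking $A=\{(x,\theta x,\dots,\theta^q x)\}$ with a single Liouville-type $\theta$ satisfying $|\theta-p_k/q_k|\asymp q_k^{-\alpha}$ cannot give the claimed estimates for $e<d$: an integer vector of norm $\asymp q_k$ within angle $q_k^{-\alpha}$ of the direction $(e_j,\theta e_j,\dots,\theta^q e_j)$ would force $\|m\theta^i\|\lesssim q_k^{1-\alpha}$ (distance to the nearest integer) simultaneously for all $i\le q$ with a single $m\asymp q_k$, whereas for a lacunary tower the good denominators of $\theta^i$ are $\asymp q_k^{\,i}$; already for $\theta=\sum_k 2^{-\lfloor\alpha^k\rfloor}$ and $m=2^{\lfloor\alpha^N\rfloor}$ one has $\|m\theta^2\|$ bounded away from $0$. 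This is exactly why the paper does not use powers of one number: it takes $d(n-d)$ algebraically independent lacunary series $\sigma_{i,j}$ in a common base $\theta$ with staggered digit positions, so that one truncation level $N$ gives a common denominator $\theta^{\lfloor\alpha^N\rfloor}$ approximating every coordinate of $Y_j$ to quality $\theta^{-\alpha^{N+1}}$; the staggering (Lemma 2.2 and Remark 2.4 of the paper) is also what later makes the heights computable. Your height claims ($H\asymp q_k^{e}$, $\log H\asymp (r_e+(d-r_e)\alpha)\log q_k$) are asserted rather than proved: $H(B)$ is the norm of the wedge of a $\Z$-basis of $B\cap\Z^n$, not of an arbitrary integer spanning family, and controlling this (the paper's Lemma 3.1, and the ideal-norm bound in Lemma 4.5) is a substantial part of the proof that your sketch does not address, nor do you give the lower bound on $\psi_d(A,C_{N,e})$ needed to exploit the approximating sequence later.

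The upper bounds on the exponents are only a plan. Saying that Minkowski's second theorem applied to ``a suitably skewed convex body'' produces integer vectors whose block coordinates approximate polynomials in $\theta$, and that one must ``verify that the splitting used in the upper bound is optimal'', names precisely the difficulty without resolving it. In the paper this step consists of: showing that any $C$ of dimension $e\ge d$ with $\psi_d(A,C)\le H(C)^{-\mu-\varepsilon}$ must contain the specific rational subspace $B_{N+1,q_e}$ (by forcing integer wedge products below $1$, hence zero); then building, by induction with Minkowski's first theorem applied to the projected lattice $\pi_r(C\cap\Z^n)$, a subspace $W\subset\Span(X_N^1,\dots,X_N^d)\cap C$ of dimension $r_e$, so that $C=B_{N+1,q_e}\oplus W$; then the delicate lower bound $H(B_{N+1,v}\oplus W)\gtrsim\theta^{r\alpha^N+(d-r)\alpha^{N+1}}$; and an analogous forced intersection $Z\in C\cap D_{N,d}$ for $e<d$ followed by a lower bound on $\psi_1(\Span(Z),A)$. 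None of this is supplied by your outline, and it is where inequalities defining $C_d$ enter. Relatedly, your $C_d$ is described circularly (``chosen exactly so that every alternative splitting yields a strictly smaller exponent''), whereas the theorem requires an explicit constant; in the paper $C_d$ is defined by explicit polynomial inequalities in $\alpha$ ((2.1)--(2.5)), which are then invoked at precise points of the two lower-bound lemmas.
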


In the case $d = 1$, the image of $(\mu_n(\cdot|1)_{1}, \ldots, \mu_n(\cdot|n-1)_{1})$ has been determined by Roy \cite{Roy}. In the case $d >1$, the determination of the joint spectrum remains an open problem. In \cite{article_Guillot}, assuming that $d$ divides $n$, the author provides examples of points in this image (see Theorem~1.8) and, in particular, identifies a non-empty open subset within it (see the proof of Theorem~1.5). While in \cite{article_Guillot}, one explicitly identifies the rational subspaces of best approximations of a subspace $A$ to compute $\mu_n(A|e)_{\min(d,e)}$, here we employ tools of geometry of numbers (mainly Minkowski's theorem on convex bodies) to find specific vectors with integer coordinates in the best approximations. Theorem \ref{8theo_dernier_angle} provides us with values taken by the tuple of functions $(\mu_n(\cdot|1)_{\min(d,1)}, \ldots, \mu_n(\cdot|n-d)_{\min(d,n-d)})$ which we are able to compute using the method developed here.

\bigskip

The definition of $C_d$ and the construction of the subspace $A$ are carried out in Section~\ref{section_construction}. Section \ref{sect_good_approx} is devoted to the study of rational vectors and rational subspaces that shall give good approximations of $A$. To prove Theorem~\ref{8theo_dernier_angle}, we consider two cases depending on whether $e < d$ or $e \geq d$.
\\In both cases, the lower bound of the exponent $\mu_n(A|e)_{\min(d,e)}$ is shown by presenting a family of rational subspaces that approximate $A$ well (see Lemmas~\ref{8lem_psid_ACNe} and \ref{8lem_Angle_psie_DN}). For $e \geq d$, we get that the "best" subspaces $C$ approximating $A$ contain a certain rational subspace $B_{N+1,q_e}$ (Lemma~\ref{8lem_inclusion_BN_C}). We can then bound the height of the subspace $C$ from below (Lemma~\ref{8minoration_somme_espace}), and we conclude that $\mu_n(A|e)_d$ cannot be too large, thanks to Lemma~\ref{8lem_minoration_exposant}. For $e < d$, we show that the "best" subspaces $C$ approximating $A$ intersect non-trivially a certain rational subspace $D_{N,d}$ (Lemma~\ref{8lem_intersection_DN_C}). We achieve the upper bound of $\mu_n(A|e)_e$ in Lemma~\ref{8lem_minoration_exposant_e<d} by bounding $\psi_{1}(C \cap D_{N,d}, A)$ from below and thus, a fortiori, $\psi_e(C, A)$.

\bigskip

Lemma~\ref{8lem_minoration_exposant} and Lemma~\ref{8lem_intersection_DN_C} represent the most challenging aspects of the proof of Theorem \ref{8theo_dernier_angle}, and they crucially involve the geometry of numbers.
\section{Construction of the subspace \texorpdfstring{$A$}{}}\label{section_construction}

We define the constant $C_d$ as the smallest real number such that for any $\alpha \geq C_d$, we have: 
\begin{align}
 -\alpha^{2} + \alpha(2d+2) - d &\leq 0 \label{8lem_inegal_alpha1} \\
 - \frac{\alpha}{2} + d(d-1) +1 &\leq 0 \label{8lem_inegal_alpha2} \\
 -\alpha^2 +(1+2d)\alpha - d &\leq 0 \label{8lem_inegal_alpha3},
\end{align}
and for any $e \in \llbracket d, qd \rrbracket: $ 
\begin{align}
 dr_e -(d-r_e) \alpha \leq 0 \label{8lem_inegal_alpha4} \\
 \frac{\alpha^{q_e}}{d -r_e + \frac{1}{2}} -\frac{\alpha^{q_e+1}}{r_e +(d -r_e)\alpha} + 1 \leq 0\label{8lem_inegal_alpha5}
\end{align}
where $q_e$ and $r_e$ are the quotient and the remainder of the Euclidean division of $e$ by $d$. Elementary computations show that $2 < C_d \leq 3d(d+4)$, see \cite[Lemma 9.3]{Guillot_these}.

\bigskip
We fix $\alpha \geq C_d$. Recall that $n = (q+1)d$ and therefore $qd = n - d$. Here, we construct the subspace $A$ from Theorem~\ref{8theo_dernier_angle}. Let $\theta$ be a prime number greater than or equal to $5$. For $j \in \llbracket 1, d \rrbracket$, define $\phi_j : \N \to \llbracket 0, qd-1 \rrbracket$ by:
\begin{align*}
\phi_j(k) = k + (j-1)q \mod (qd)
\end{align*}
where $x \mod (qd)$ is the remainder of the Euclidean division of $x$ by $qd$.
\\Throughout this paper, we denote $\sigma_{i,j} = \sum\limits_{k = 0}^{+ \infty} \frac{u^{(i,j)}_k}{\theta^{ \lfloor \alpha^k \rfloor }}$ for $i \in \llbracket 0, qd-1 \rrbracket$ and $j \in \llbracket 1, d \rrbracket$, with sequences $u^{(i,j)}$ that we will chose using the following lemma. Using the fact that $\alpha \geq 2$, Roth's theoreme implies that $\sigma_{i,j}$ is transcendental, see \cite{Bugeaud_approx_cantor} for further details on the number constructed here.

\begin{lem}\label{lem_cont_suit}
There exist sequences $u^{(0,1)}, \ldots, u^{(qd-1,1)}, \ldots, u^{(0,d)}, \ldots, u^{(qd-1,d)}$ satisfying $\forall i \in \llbracket 0, qd-1 \rrbracket, \quad \forall j \in \llbracket 1, d \rrbracket, \quad \forall k \in \N$:
\begin{align}\label{8construc_suite_u}
u^{(i,j)}_k \left\{
\begin{array}{ll}
\in \{2, 3\} & \text{if } i = k + (j-1)q \mod (qd)\\
= 0 & \text{otherwise}
\end{array}
\right.
\end{align}
and such that the family $(\sigma_{0,1}, \ldots, \sigma_{qd-1,1}, \ldots, \sigma_{0,d}, \ldots, \sigma_{qd-1,d})$ is algebraically independent over $\Q$.
 
\end{lem}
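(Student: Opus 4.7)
The plan is to construct the sequences probabilistically: for each $(i,j) \in \llbracket 0, qd-1 \rrbracket \times \llbracket 1, d \rrbracket$, the sequence $u^{(i,j)}$ is forced to vanish off the arithmetic progression of indices $k$ with $k \equiv i - (j-1)q \mod (qd)$, and on that progression I would let each value in $\{2, 3\}$ be chosen independently and uniformly at random, equipping the space of free bits with a product Bernoulli measure $\mathbb{P}$. Different pairs $(i,j)$ correspond to disjoint families of random bits, so the $qd^2$ variables $(\sigma_{i,j})$ are mutually independent under $\mathbb{P}$.

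I would then verify that each marginal law of $\sigma_{i,j}$ is atomless. Since $\alpha \geq C_d > 2$, the exponents $\lfloor \alpha^{k} \rfloor$ along the free positions grow quickly enough that, in base $\theta \geq 5$, each term of the series defining $\sigma_{i,j}$ strictly dominates the sum of all subsequent ones, so the map from $\{2,3\}^{\N}$ to $\R$ sending the free bits to $\sigma_{i,j}$ is injective; as each singleton in $\{2,3\}^{\N}$ has $\mathbb{P}$-measure zero, the pushforward law has no atoms.

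The key general input is a classical Fubini-type lemma: if $\mu_1, \ldots, \mu_m$ are atomless Borel probability measures on $\R$ and $P \in \R[X_1, \ldots, X_m]$ is nonzero, then $(\mu_1 \otimes \cdots \otimes \mu_m)(\{P = 0\}) = 0$. I would prove it by induction on $m$: the case $m=1$ follows because $P$ has finitely many roots and $\mu_1$ has no atoms; for the induction step, decompose $P = \sum_k P_k(X_1, \ldots, X_{m-1}) X_m^k$, observe that the algebraic set $\{P_k = 0 \text{ for all } k\}$ has $(\mu_1 \otimes \cdots \otimes \mu_{m-1})$-measure zero by the induction hypothesis applied to one of the nonzero $P_k$, and note that off this set the $X_m$-slice of $\{P = 0\}$ is finite; Fubini then yields the conclusion.

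To finish, I would enumerate the countable set of nonzero polynomials $(P_n)_{n \geq 1}$ in $\Q[X_1, \ldots, X_{qd^2}] \setminus \{0\}$. Applying the lemma to the product law of the independent atomless variables $\sigma_{i,j}$ gives $\mathbb{P}(P_n(\sigma) = 0) = 0$ for every $n$, and countable subadditivity yields $\mathbb{P}(\exists n : P_n(\sigma) = 0) = 0$. Hence $\mathbb{P}$-almost every choice of the $u^{(i,j)}$ produces an algebraically independent family, and such sequences exist in particular. The main obstacle is the atomlessness step, which rests on the super-exponential growth of $\lfloor \alpha^k \rfloor$ guaranteed by $\alpha > 2$; without it the probabilistic argument would collapse.
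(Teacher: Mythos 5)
Your proof is correct, but it follows a genuinely different route from the paper. The paper argues by induction on the $d(n-d)$ numbers $\sigma_{i,j}$ taken one at a time, using pure cardinality: at each step the set of numbers algebraic over the field $\Q(\sigma_1,\ldots,\sigma_t)$ generated so far is countable, while the admissible sequences $(u_k^{t+1})$ satisfying \eqref{8construc_suite_u} form an uncountable set mapping (injectively enough) to distinct reals, so one can pick the next sequence so that $\sigma_{t+1}$ is transcendental over that field; algebraic independence of the whole family follows. You instead put a Bernoulli product measure on the free digits, check that each $\sigma_{i,j}$ has an atomless law (via injectivity of the digits-to-number map, which indeed follows from $\theta\geq 5$ and the growth of $\lfloor\alpha^k\rfloor$, since the tail $\sum_{m>k}\theta^{-\lfloor\alpha^m\rfloor}\leq\theta^{-\lfloor\alpha^k\rfloor}/(\theta-1)$ is dominated by the first disagreeing digit), note the $\sigma_{i,j}$ are independent because distinct pairs $(i,j)$ use disjoint digit families, and invoke a Fubini-type lemma that the zero set of a nonzero polynomial is null for a product of atomless measures, finishing by countable subadditivity over $\Q[X_1,\ldots,X_{qd^2}]\setminus\{0\}$. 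Both arguments are sound and of comparable length; the paper's is more elementary (no measure theory, no Fubini lemma, no atomlessness verification) and transfers the whole difficulty to countability of algebraic elements over a countably generated field, whereas yours buys a stronger conclusion — almost every admissible choice of digits yields an algebraically independent family, not merely some choice — and avoids handling a tower of field extensions, at the price of the measure-theoretic infrastructure and of the injectivity estimate, which is the one step you should write out carefully since it is where the hypotheses $\theta\geq 5$ and $\alpha>2$ actually enter.
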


\begin{proof}
 Let $\sigma_{0,1}, \ldots, \sigma_{qd-1,1}, \ldots, \sigma_{0,d}, \ldots, \sigma_{qd-1,d}$ be arbitrarily ordered and denoted by $\sigma_1, \ldots, \sigma_{d(n-d)}$, and let us reason by induction on $t \in \llbracket 1,d(n-d)\rrbracket$. We denote the sequences $u_k^{(i,j)}$ associated with $\sigma_{i,j}$ by $u_k^1, \ldots, u_k^{d(n-d)}$.

The set of algebraic numbers over $\Q$ is countable and the set of sequences $(u_k^1)$ satisfying \eqref{8construc_suite_u} is uncountable. Therefore, we choose a sequence such that $\sigma_1$ is transcendental over $\Q$. \\
Now, suppose that we have constructed $\sigma_1, \ldots, \sigma_t$ as an algebraically independent family over $\Q$ with $t \in\llbracket 1, d(n-d) -1 \rrbracket$. The set of algebraic numbers over $\Q(\RR,\sigma_1, \ldots, \sigma_t)$ is countable, but the set of sequences $(u_k^{t+1})_{k \in \N}$ satisfying \eqref{8construc_suite_u} is uncountable. Therefore, we can choose a sequence such that $\sigma_{t+1} $ is transcendental over $\Q(\sigma_0, \ldots, \sigma_t)$, completing the induction.

\end{proof}

From now on, we assume that the sequences $(u_k^{(i,j)})_{k \in \N} $ and $\sigma_{i,j}$ for $i \in \llbracket 0 , qd-1 \rrbracket $ and $j \in \llbracket 1,d \rrbracket$ satisfy the conclusion of Lemma~\ref{lem_cont_suit}. 
\begin{req}\label{8req_def_uk}
 For fixed $k$ and $j$, the integer $i = k + (j-1)q \mod (qd)$ is the unique integer in $\llbracket 0, qd-1 \rrbracket$ such that $u_k^{(i,j)} \neq 0$.
\end{req}

 Before defining the subspace $A$, we state a lemma in order to study more precisely the sequences $(u_k^{(i,j)})_{k \in \N} $.

\begin{lem}\label{8lem_unique_kj_pour_i}
Let $i \in \llbracket 0, qd-1 \rrbracket$ and $N \in \N$. There exists a unique pair $(k,j) \in \llbracket 0, q-1 \rrbracket \times \llbracket 1, d \rrbracket$ such that $u_{N+k}^{(i,j)} \neq 0$.
\end{lem}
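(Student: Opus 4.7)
The plan is to reduce the statement to a bijectivity of a simple map on $\llbracket 0, q-1 \rrbracket \times \llbracket 1, d \rrbracket$. By Remark~\ref{8req_def_uk} (equivalently, condition~\eqref{8construc_suite_u}), for any index $m$, we have $u_m^{(i,j)} \neq 0$ if and only if $i \equiv m + (j-1)q \pmod{qd}$. Applying this with $m = N+k$, the lemma is equivalent to showing that the map
\[
\Phi : \llbracket 0, q-1 \rrbracket \times \llbracket 1, d \rrbracket \longrightarrow \llbracket 0, qd-1 \rrbracket, \qquad (k,j) \longmapsto (N+k) + (j-1)q \mod (qd),
\]
takes the value $i$ for exactly one pair $(k,j)$. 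Since both the source and the target have cardinality $qd$, it suffices to prove that $\Phi$ is a bijection.

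First I would remove the dependence on $N$: the map $(k,j) \mapsto (N+k) + (j-1)q \mod (qd)$ is the composition of $(k,j) \mapsto k + (j-1)q \mod (qd)$ with the translation $x \mapsto x + N \mod (qd)$, so one is a bijection if and only if the other is. Thus it remains to show that
\[
\Psi : (k,j) \longmapsto k + (j-1)q \mod (qd)
\]
is a bijection. Now for $(k,j) \in \llbracket 0, q-1 \rrbracket \times \llbracket 1, d \rrbracket$ we have $j-1 \in \llbracket 0, d-1 \rrbracket$, so $k + (j-1)q$ already lies in $\llbracket 0, qd-1 \rrbracket$ and no reduction mod $qd$ is needed. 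Conversely, given $\ell \in \llbracket 0, qd-1 \rrbracket$, the Euclidean division of $\ell$ by $q$ yields a unique pair $(k,j-1) \in \llbracket 0, q-1 \rrbracket \times \llbracket 0, d-1 \rrbracket$ with $\ell = k + (j-1)q$. This exhibits $\Psi$ as the inverse of the Euclidean division map, hence a bijection, concluding the proof.

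There is essentially no obstacle here; the only subtlety is noticing that after the translation by $N$, one is looking at the standard base-$q$ decomposition, which is bijective by the uniqueness of the Euclidean division.
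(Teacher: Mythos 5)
Your proof is correct. The underlying arithmetic fact is the same as in the paper (uniqueness and existence of the Euclidean division by $q$ within $\llbracket 0, qd-1 \rrbracket$), but your packaging is genuinely different: the paper proves uniqueness and existence separately and by hand, writing the Euclidean divisions of both $i$ and $N$ by $q$ and then verifying an explicit formula for $(k,j)$ in two cases ($v \geq v'$ and $v < v'$); you instead reformulate the lemma as bijectivity of the map $\Phi(k,j) = (N+k)+(j-1)q \bmod (qd)$, factor out the dependence on $N$ by composing with the translation $x \mapsto x + N \bmod (qd)$ (a bijection of $\Z/qd\Z$), and then observe that the remaining map $\Psi(k,j) = k+(j-1)q$ lands in $\llbracket 0, qd-1 \rrbracket$ without reduction and is exactly the inverse of Euclidean division by $q$. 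This buys you a cleaner argument with no case analysis and no explicit construction of $(k,j)$ depending on the residues of $i$ and $N$, at the mild cost of the abstraction of passing through bijectivity; the paper's version, while more computational, produces the pair $(k,j)$ explicitly, which one could in principle reuse elsewhere. Both proofs are complete and correct.
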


\begin{proof}
\textbullet \, \underline{Uniqueness:} Suppose there exist $\ell_1, \ell_2 \in \llbracket 0, q-1 \rrbracket$ and $j_1, j_2 \in \llbracket 1, d \rrbracket$ such that $u_{N + \ell_1}^{(i,j_1)} \neq 0 \text{ and } u_{N + \ell_2}^{(i,j_2)} \neq 0.$
By the definition of $u_k^{(i,j)}$, we have:
\begin{align*}
N + \ell_1 +(j_1-1)q \equiv N + \ell_2 +(j_2-1)q \mod{(qd)}.
\end{align*}
By the uniqueness of Euclidean division by $q$, since $\ell_1, \ell_2 \in \llbracket 0, q-1 \rrbracket$, we have $\ell_1 = \ell_2$. Thus, $(j_1-1)q \equiv (j_2-1)q \mod{(qd)}$ and hence $(j_1 - j_2)q \equiv 0 \mod{(qd)}$. Since $j_1, j_2 \in \llbracket 1, d \rrbracket$, we have $j_1 = j_2$.

\textbullet \, \underline{Existence:} We write the Euclidean divisions of $i$ and $N$ by $q$, $i = qu + v \text{ and } N = qu' + v'$
with $v, v' \in \llbracket 0, q-1 \rrbracket$.\\
If $v \geq v'$, we set $k = v - v' \in \llbracket 0, q-1 \rrbracket$ and $j = (u - u' \mod d) + 1 \in \llbracket 1, d \rrbracket$. We then verify that $i = N + k + (j-1)q \mod{(qd)}$:
\begin{align*}
N + k + (j-1)q \mod{(qd)} &= qu' + v' + v - v' + q(u-u') \mod{(qd)} = qu + v \mod{(qd)} = i.
\end{align*}
If $v < v'$, we set $k = v - v' + q \in \llbracket 0, q-1 \rrbracket$ and $j = (u - u' - 1 \mod d) + 1 \in \llbracket 1, d \rrbracket$. We then verify that $i = N + k + (j-1)q \mod{(qd)}$:
\begin{align*}
N + k + (j-1)q \mod{(qd)} &= qu' + v' + v - v' + q + q(u-u'-1) \mod{(qd)} = qu + v \mod{(qd)} = i.
\end{align*}
\end{proof}

We define for $j \in \llbracket 1,d \rrbracket$, the vector $Y_j$ in $\R^n$ as:
\begin{align*}
 Y_j = \begin{pmatrix} 
 0 & \cdots&
 0 &
 1 &
 0 &
 \cdots &
 0 &
 \sigma_{0,j} &
 \cdots &
 \sigma_{qd-1,j}
 \end{pmatrix}^\intercal
\end{align*}
where the $j-$th coordinate is equal to $1$, the last $n-d$ are $\sigma_{0,j}, \ldots, \sigma_{qd-1,j}$ and the others are zero.
We then define the subspace $A$ from Theorem~\ref{8theo_dernier_angle} as $A =\Span(Y_1, \ldots, Y_d)$. By considering the first $d$ coefficients of the vectors $Y_j$, it is clear that $\dim(A) = d$.

\begin{lem}
Let $e \in \llbracket 1, n-d \rrbracket$, then the subspace $A$ is $(n-d,\min(d,e))\tir$irrational.
\end{lem}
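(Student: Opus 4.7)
The plan is to prove the stronger statement that $A \cap B = \{0\}$ for every rational subspace $B$ of dimension $n-d$; since $\min(d,e) \geq 1$ for every $e \in \llbracket 1, n-d \rrbracket$, this yields $\dim(A\cap B) = 0 < \min(d,e)$, as required.

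First I would reformulate the intersection as a linear algebra question. Any such $B$ is the kernel of a $d \times n$ matrix $F$ with rational entries and rank $d$. Write $F = (A_0 \mid B_0)$ with $A_0 \in \Mat_{d,d}(\Q)$ and $B_0 \in \Mat_{d,n-d}(\Q)$, and let $\Sigma$ be the $(n-d) \times d$ matrix whose $j$-th column is $(\sigma_{0,j}, \ldots, \sigma_{qd-1,j})^\intercal$. The shape of the vectors $Y_j$ gives, for $v = \sum_{j=1}^d \lambda_j Y_j \in A$, that $Fv = 0$ is equivalent to $(A_0 + B_0 \Sigma)\lambda = 0$. Thus $A \cap B = \{0\}$ if and only if the $d\times d$ matrix $M := A_0 + B_0 \Sigma$ is invertible.

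The crux is to show that the polynomial $P(X) := \det(A_0 + B_0 X) \in \Q[X_{k,j}]$, in the entries of a generic $(n-d) \times d$ matrix $X$, is not identically zero. Writing $A_0 + B_0 X = (A_0 \mid B_0) \begin{pmatrix} I_d \\ X \end{pmatrix}$ and applying the Cauchy-Binet formula,
\begin{align*}
P(X) = \sum_{\substack{I \subset \{1, \ldots, n\} \\ |I| = d}} \det\bigl((A_0 \mid B_0)_I\bigr) \cdot \det \begin{pmatrix} I_d \\ X \end{pmatrix}^{I}.
\end{align*}
Splitting $I = I_1 \sqcup I_2$ with $I_1 \subset \{1,\ldots,d\}$ and $I_2 \subset \{d+1,\ldots,n\}$, a Laplace expansion along the rows in $I_1$ identifies $\det\begin{pmatrix} I_d \\ X \end{pmatrix}^{I}$, up to a sign, with the minor of $X$ with row set $\{k-d : k \in I_2\}$ and column set $\{1,\ldots,d\} \setminus I_1$. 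The monomial supports of these minors in the $X_{k,j}$ are pairwise disjoint as $I$ varies, since the set of column indices of any monomial determines $I_1$ and the set of row indices determines $I_2$. Because $F$ has rank $d$, at least one coefficient $\det((A_0 \mid B_0)_I)$ is nonzero, and the disjointness rules out any cancellation; hence $P \not\equiv 0$.

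By Lemma~\ref{lem_cont_suit}, the entries of $\Sigma$ are algebraically independent over $\Q$, so $P(\Sigma) \neq 0$: the matrix $M$ is invertible and $A \cap B = \{0\}$. The main technical point in this plan is verifying the disjointness of the monomial supports in the Cauchy-Binet expansion; once this combinatorial observation is in hand, the remainder is purely formal.
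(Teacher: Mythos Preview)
Your argument is correct and in fact establishes the stronger assertion $A\in\II_n(d,n-d)_1$ that the paper only mentions in the subsequent remark (with a reference to \cite[Lemma 9.6]{Guillot_these}). The paper's own proof of the lemma is organised differently: it fixes a rational subspace $B$ of dimension $e$, assumes $\dim(A\cap B)\geq\min(d,e)$, and treats the cases $e\leq d$ and $e\geq d$ separately. In the first case one gets a nonzero rational vector in $A$ and reads off a $\Q$-linear relation among $1,\sigma_{i,1},\ldots,\sigma_{i,d}$; in the second case $A\subset B$, and the paper analyses the vanishing of all size-$(e+1)$ minors of the matrix $(Y_1\,|\,Z_1\,|\,\cdots\,|\,Z_e)$ to force the $Z_i$ to be linearly dependent, again exploiting the algebraic independence of the $\sigma_{i,1}$ to replace them by arbitrary values.

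Your route is more uniform and more conceptual: recasting $A\cap B=\{0\}$ as the invertibility of $A_0+B_0\Sigma$, and then showing once and for all that $P(X)=\det(A_0+B_0X)$ is a nonzero element of $\Q[X_{k,j}]$ via Cauchy--Binet, so that algebraic independence of the entries of $\Sigma$ finishes the job. The monomial-support disjointness you isolate is exactly the right combinatorial point, and it makes the non-vanishing of $P$ transparent. The paper's approach avoids Cauchy--Binet but pays for it with a case split and an explicit minor-chasing argument in the case $e\geq d$; your approach proves more with less bookkeeping.
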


\begin{proof}
 Let $B$ be a rational subspace of dimension $e$. Suppose by contradiction that $\dim(A \cap B) \geq \min(d,e) $. Let us distinguish between two cases, depending on whether $d \geq e$ or $d \leq e$.
 \\ \textbullet \, If $d \geq e$ then $A \cap B = B$. So there exists $X \in A \cap \Q^n \setminus \{0 \}$ of the form $X = \lambda_1 Y_1 + \ldots + \lambda_d Y_d$. By looking at the first $d$ coordinates of $X$ we have $(\lambda_1, \ldots, \lambda_d) \in \Q^n$. For $i \in \llbracket 0, qd-1 \rrbracket$, we have a $\Q$-linear relation between $1, \sigma_{i,1}, \ldots, \sigma_{i,d}$ by looking at the $(d+1+i)-$th coordinate of $X$ which leads to a contradiction.
 \\ \textbullet \, If $d \leq e$ then $A \cap B = A$. In particular, we then have $Y_1 \in B$. Denoting by $Z_1, \ldots, Z_{n-d}$ a rational basis of $B$, we have $Y_1\wedge Z_1 \wedge \ldots \wedge Z_{n-d} = 0$. This equality implies the nullity of any minor of size $n-d+1$ of the matrix $
\begin{pmatrix}
Y_1 & Z_1 & \cdots & Z_{n-d}
\end{pmatrix} \in \mathcal{M}_{n, n-d+1}(\R).$ Every such minor is polynomial with rational coefficients in the coefficients of $Y_1$, as the $Z_i$ are rational. We can also view each determinant as a polynomial in $\Q[X_0, \ldots, X_{n-d-1}]$ evaluated at the $\sigma_{i,1}$. Since these coefficients form an algebraically independent family over $\Q$, then each polynomial is identically zero.
Thus, we can replace the coefficients $\sigma_{i,1}$ of $Y_1$ by any real family, and the determinant will be zero. Using this, we shall show that any minor of size $n-d$ of the matrix $Q = \begin{pmatrix} Z_1 & \cdots & Z_{n-d} \end{pmatrix}$ vanishes. Let $\Delta$ be a submatrix of size $(n-d) \times (n-d)$ of $Q$, we denote by $\Ind(\Delta)$ the set of indices $1 \leq i_1 < \ldots < i_{n-d} \leq n $ of the rows of $Q$ from which $\Delta$ is extracted. We distinguish between two cases.

\begin{itemize}[label = $\diamond$]
\item If $1 \notin \Ind(\Delta)$, then we set $\sigma_{0, j} = \ldots = \sigma_{qd-1,0} = 0$ and we compute the minor of size $n-d+1$ of the matrix$ \begin{pmatrix}
Y_1 & Z_1 & \cdots & Z_{n-d}
\end{pmatrix} $ corresponding to the rows $1$ and $\Ind(\Delta)$. This minor is equal to $\pm \det(\Delta)$ and is zero, so $\det(\Delta)$ vanishes. 

\item If $ 1 \in \Ind(\Delta)$, let us fix $i \in \llbracket 0, qd-1 \rrbracket$ such that $d + i +1\notin \Ind(\Delta)$. We set $\sigma_{i,1} = 1$ and $\sigma_{k,1} =0 $ for $ k \in \llbracket 0, qd-1 \rrbracket \setminus \{i\}$. We compute the minor of size $n-d+1$ of the matrix$ \begin{pmatrix}
Y_1 & Z_1 & \cdots & Z_{n-d}
\end{pmatrix} $ corresponding to the rows $d+i+1$ and $\Ind(\Delta)$; it is equal to:
\begin{align*}
 \pm \det(\Delta) + \det(\Delta')
\end{align*}
where $\Delta'$ is a submatrix of $(n-d) \times (n-d)$ of $Q$ with $1 \notin \Ind(\Delta') = \Ind(\Delta) \setminus \{1\}$. Using the first case, we have $\det(\Delta') = 0$ and so $\det(\Delta) = 0$. 
\end{itemize}
We have thus shown that every minor of size $n-d$ of $Q$ vanishes. In particular, $\rank(Q) < n-d $ which is contradictory since $Z_1, \ldots, Z_{n-d}$ form a basis of $B$.

\end{proof}

\begin{req}
 We can actually prove that $A \in \II_n(d,n-d)_{1} = \bigcap\limits_{e =1}^{n-d} \II_n(d,e)_{1}$, see \cite[Lemma 9.6]{Guillot_these}.
\end{req}

\section{Rational subspaces of good approximation}\label{sect_good_approx}
In this section we define rational subspaces. We will show later that theses subspaces achieve good approximations of $A$.

For $i \in \llbracket 0, qd-1 \rrbracket$, $j \in \llbracket 1, d \rrbracket$, and $N \in \Nx$, we define the truncated sum: 
$\sigma_{i,j,N} = \sum\limits_{k=0}^{N} \frac{u_k^{(i,j)}}{\theta^{\lfloor \alpha^k \rfloor}} \in \frac{1}{\theta^{\lfloor \alpha^N \rfloor}} \Z$.
Now, for $j \in \llbracket 1, d \rrbracket$, we define the vector in $\Z^n$:\begin{align*}
 X_N^j = \theta^{\lfloor \alpha^N \rfloor} \begin{pmatrix} 
 0 & \cdots&
 0 &
 1 &
 0 &
 \cdots &
 0 &
 \sigma_{0,j,N} & \cdots & \sigma_{qd-1,j,N}
 \end{pmatrix}^\intercal
\end{align*}
where the $j-$th coordinate is equal to $1$, the last $n-d$ are $\sigma_{0,j,N}, \ldots, \sigma_{qd-1,j,N}$ and the others are zero.

Next, we denote for $N$ and $v$ two positive integers:
\begin{align}\label{8def_Bnv}
B_{N,v} = \Span(X_N^1, X_{N+1}^1, \ldots, X_{N+v-1}^1, X_N^2, \ldots, X_{N+v-1}^2, \ldots, X_N^d, \ldots, X_{N+v-1}^d)
\end{align}
which is a rational subspace by definition.
For $j \in \llbracket 1, d \rrbracket$, we note that:
\begin{align}\label{8rec_Xn_Un}
X_{N+1}^j = \theta^{\lfloor \alpha^{N+1} \rfloor - \lfloor \alpha^N \rfloor} X_N^j + U_{N+1}^j \text{ with } U_{N+1}^j = \begin{pmatrix}
0 & \cdots & 0 & u_{N+1}^{(0,j)}& \cdots & u_{N+1}^{(qd-1,j)}
\end{pmatrix}^\intercal.
\end{align}

We set $V_N^j = \frac{U_N^j}{\|U_N^j\|} \in \Z^n$ because the vectors $U_N^j$ have a unique non-zero coordinate according to the construction in \eqref{8construc_suite_u}. The vectors $V_N^j$ are thus vectors of the canonical basis of $\R^n$.
We also introduce the vectors:
\begin{align}\label{8vecteur_Zn}
Z_N^j = \frac{1}{\theta^{\lfloor \alpha^N \rfloor}} X_N^j
\end{align}
and we note that $Z_N^j \underset{N\to+\infty}{\longrightarrow} Y_j$ so we deduce that there exist constants $\cons \label{8cons_minor_norme_XN}$ and $\cons \label{8cons_major_norme_XN}$ independent of $N$ such that for any $N \in \N$:
\begin{align}\label{8norme_XN}
c_{\ref{8cons_minor_norme_XN}} \theta^{\alpha^N} \leq \|X_N^j\| \leq c_{\ref{8cons_major_norme_XN}} \theta^{\alpha^N}.
\end{align}
More precisely for $j \in \llbracket 1, d \rrbracket$, we have:
\begin{align}\label{8major_Yj_XNj}
\psi_1(\Span(Y_j), \Span(X_N^j)) = \omega(Y_j, Z_N^j) \leq \frac{\|Y_j - Z_N^j\|}{\|Y_j\|} \leq c_{\ref{8cons_major_angle_Yj_XNj}} \theta^{-\alpha^{N+1}}
\end{align}
with $\cons \label{8cons_major_angle_Yj_XNj} >0$ independent of $N$. 

\begin{lem}\label{8lem_dimBN_zbaseBN}
Let $v \in \llbracket 1, q \rrbracket$. Then the subspace $B_{N,v}$ has dimension $dv$. Moreover, the vectors $(X_N^j)_{j \in \llbracket 1,d\rrbracket} \cup (V_k^j)_{j \in \llbracket 1,d\rrbracket, k \in \llbracket N+1, N+v-1\rrbracket}$ form a $\Z$-basis of $B_{N,v} \cap \Z^n$.
\end{lem}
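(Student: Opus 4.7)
The plan is to proceed in two stages: establish $\dim B_{N,v} = dv$ via an explicit $\R$-basis, then upgrade to a $\Z$-basis by a saturation argument.

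\emph{Stage 1 ($\R$-basis).} Iterating the recurrence \eqref{8rec_Xn_Un} and using $U_{N+\ell}^j = u_{N+\ell}^{(i_\ell^j,\,j)} V_{N+\ell}^j$ with $i_\ell^j = (N+\ell)+(j-1)q \mod (qd)$ the unique position where $V_{N+\ell}^j$ is non-zero (Remark~\ref{8req_def_uk}), one obtains
\[
X_{N+k}^j = \theta^{\lfloor \alpha^{N+k}\rfloor - \lfloor \alpha^N\rfloor} X_N^j + \sum_{\ell=1}^{k} u_{N+\ell}^{(i_\ell^j,j)} \theta^{\lfloor \alpha^{N+k}\rfloor - \lfloor \alpha^{N+\ell}\rfloor} V_{N+\ell}^j
\]
for $k \in \llbracket 0, v-1\rrbracket$. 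Hence $B_{N,v} = \Span_\R \FF$ with $\FF = (X_N^j)_{j} \cup (V_{N+\ell}^j)_{\ell \in \llbracket 1, v-1\rrbracket,\, j}$. Since $v-1 \leq q-1$, Lemma~\ref{8lem_unique_kj_pour_i} implies the map $(\ell,j) \mapsto i_\ell^j$ is injective, so the $V_{N+\ell}^j \in \FF$ are $d(v-1)$ distinct canonical basis vectors among $e_{d+1}, \ldots, e_n$. Reading the first $d$ coordinates of a vanishing real linear combination of $\FF$ forces all $X_N^j$-coefficients to vanish, and then the $V$-coefficients vanish by the independence of distinct canonical basis vectors. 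Therefore $\FF$ is an $\R$-basis of $B_{N,v}$ and $\dim B_{N,v} = dv$.

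\emph{Stage 2 ($\Z$-basis).} Let $L_0$ be the $\Z$-span of $\FF$; clearly $L_0 \subseteq B_{N,v} \cap \Z^n$. To show equality, I would check that $L_0$ is saturated in $\Z^n$, i.e.~that the gcd of the $dv \times dv$ minors of the matrix $M$ whose columns are the elements of $\FF$ equals $1$. Two well-chosen minors will do. Selecting the rows indexed by $\llbracket 1, d\rrbracket \cup \{d+1+i_\ell^j\}$ yields a block-triangular matrix with diagonal blocks $\theta^{\lfloor \alpha^N\rfloor}I_d$ and $I_{d(v-1)}$, hence determinant $\pm\theta^{d\lfloor \alpha^N\rfloor}$.

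For the second, set $i^*(j) = N+(j-1)q \mod (qd)$, the unique index with $u_N^{(i^*(j),j)} \in \{2,3\}$, and select the rows $\{d+1+i^*(j) : j\} \cup \{d+1+i_\ell^j\}$; these $dv$ rows are distinct (in the bijection of Lemma~\ref{8lem_unique_kj_pour_i} they correspond to $k=0$ and $k=\ell \geq 1$ respectively). This minor evaluates to $\pm \det A$ with $A_{j',j} = \theta^{\lfloor \alpha^N\rfloor}\sigma_{i^*(j'),j,N}$. The crucial claim is that $A$ is diagonal modulo $\theta$: for $j'=j$, every $k \leq N$ with $u_k^{(i^*(j),j)} \neq 0$ satisfies $k \equiv N \pmod{qd}$, so the lower-order terms ($k \leq N-qd$) are divisible by $\theta^{\lfloor \alpha^N\rfloor - \lfloor \alpha^{N-qd}\rfloor}$ while the $k=N$ term equals $u_N^{(i^*(j),j)}$, giving $A_{j,j} \equiv u_N^{(i^*(j),j)} \pmod \theta$; for $j \neq j'$, contributing indices satisfy $k \equiv N+(j'-j)q \pmod{qd}$ with $(j'-j)q \not\equiv 0 \pmod{qd}$, which forces $k \leq N-q$ and hence $\theta \mid A_{j',j}$. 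Since $\theta \geq 5$ is prime and each $u_N^{(i^*(j),j)}$ lies in $\{2,3\}$, $\det A \equiv \prod_{j} u_N^{(i^*(j),j)} \not\equiv 0 \pmod \theta$.

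The two determinants are coprime (one is a pure power of $\theta$, the other is coprime to $\theta$), so the gcd of all $dv \times dv$ minors of $M$ equals $1$; this saturates $L_0$ in $\Z^n$ and yields $L_0 = B_{N,v} \cap \Z^n$. The main obstacle is the modulo-$\theta$ analysis of $A$, which hinges on carefully exploiting the sparsity pattern of the sequences $u^{(i,j)}$ encoded in Lemma~\ref{8lem_unique_kj_pour_i} to locate which indices can contribute to each entry.
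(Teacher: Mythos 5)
Your proposal is correct, and Stage~1 (the $\R$-basis and $\dim B_{N,v}=dv$) is essentially the paper's argument; but your Stage~2 takes a genuinely different route to the $\Z$-basis statement. The paper proves the lattice statement only for $v=q$, by the ``unit box'' criterion: it takes an integer vector $U=\sum_j a_{j,0}X_N^j+\sum_{k,j}a_{j,k}V_{N+k}^j\in\Z^n$ with coefficients in $[0,1]$, writes $a_{j,0}=x_j/y_j$ with $y_j\mid\theta^{\lfloor\alpha^N\rfloor}$, and runs a divisibility-at-$\theta$ argument on the coordinate $d+i$ with $i=N+q(j_{\max}-1)\bmod (qd)$ (where $y_{j_{\max}}$ is the largest denominator) to force all $a_{j,0}\in\Z$, and then all $a_{j,k}\in\Z$; the case $v<q$ is deduced by observing that a subfamily of a $\Z$-basis is a $\Z$-basis of the $\Z$-module it spans, which here is $B_{N,v}\cap\Z^n$. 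You instead invoke the Smith-normal-form criterion (the $\Z$-span of the columns of an integer matrix of full column rank is saturated iff the gcd of its maximal minors is $1$) and exhibit two coprime maximal minors: one equal to $\pm\theta^{d\lfloor\alpha^N\rfloor}$, and one congruent mod $\theta$ to $\prod_j u_N^{(i^*(j),j)}\in\{2,\dots,3^d\}$, hence prime to $\theta\ge 5$. Your mod-$\theta$ analysis of the matrix $A$ is sound: the contributing indices satisfy $k\equiv N+(j'-j)q\pmod{qd}$, so off-diagonal entries only involve $k<N$ and are divisible by $\theta$ (using $\lfloor\alpha^k\rfloor<\lfloor\alpha^N\rfloor$, valid since $\alpha\ge 2$), while the diagonal contributes $u_N^{(i^*(j),j)}$ plus multiples of $\theta$; the row sets you select are indeed pairwise distinct by the uniqueness part of Lemma~\ref{8lem_unique_kj_pour_i}. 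Both proofs ultimately exploit the same arithmetic input (sparsity of the $u^{(i,j)}$ and primality of $\theta$), but yours treats every $v\in\llbracket 1,q\rrbracket$ uniformly and packages the divisibility argument as a determinant congruence, at the price of relying on the saturation/minor-gcd criterion, whereas the paper's argument is more hands-on and needs the reduction to $v=q$ plus the maximal-denominator trick. If you write this up, state the saturation criterion explicitly (it is standard, via Smith normal form, and is in the spirit of the ideal-norm formula the paper quotes from Schmidt), since it carries the whole of Stage~2.
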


\begin{proof}
By induction on $v$ and using (\ref{8rec_Xn_Un}), we have:
\begin{align*}
B_{N,v} = \Span(X_N^1, V_{N+1}^1, \ldots, V_{N+v-1}^1, X_N^2, V_{N+1}^2, \ldots, V_{N+v-1}^2, \ldots, X_N^d, V_{N+1}^d, \ldots, V_{N+v-1}^d).
\end{align*}
Remark~\ref{8req_def_uk} allows us to assert that the $V_k^j$ considered here are all different. Furthermore, we recall that these are vectors of the canonical basis. We deduce in particular that $(X_N^j)_{j \in \llbracket 1,d\rrbracket} \cup (V_k^j)_{j \in \llbracket 1,d\rrbracket, k \in \llbracket N+1, N+v-1\rrbracket}$ form a free family since the $j-$th coefficient of $X_N^{j}$ is $\theta^{\floor{\alpha^N}}$ while the $j-$th coefficient of the other vectors of the family is zero for any $j \in \llbracket 1,d \rrbracket$. 
\\We first prove the lemma for $v = q$. Let $(a_{j,k})_{{j \in \llbracket 1,d\rrbracket, k \in \llbracket 0, q-1\rrbracket}} \in \left[0,1\right]^{qd}$ be such that:
\begin{align}\label{8vecteur_U}
U = \sum\limits_{j = 1}^d a_{j,0} X_N^j + \sum\limits_{k=1}^{q-1} \sum\limits_{j = 1}^d a_{j,k} V_{N+k}^j \in \Z^n
\end{align}
By examining the first $d$ coordinates of $U$, we find that $a_{j,0} \theta^{\lfloor \alpha^N \rfloor} \in \Z$ for all $j \in \llbracket 1,d \rrbracket$. We can write $a_{j,0} = \frac{x_j}{y_j}$ with $y_j \mid \theta^{\lfloor \alpha^N \rfloor}$ and $\gcd(x_j, \theta) = 1$ for all $j \in \llbracket 1,d \rrbracket$. Let $j_{\max} \in \llbracket 1,d \rrbracket$ be such that $\max\limits_{j = 1}^d y_j = y_{j_{\max}}$.
\\Now consider the integer $i = N + q(j_{\max} -1) \mod(qd) \in \llbracket 0, qd-1 \rrbracket$. By the definition of $u_k^{(i,j)}$ in (\ref{8construc_suite_u}), we have $u^{(i,j_{\max})}_N \neq 0$ and therefore $\forall k \in \llbracket 1, q-1\rrbracket, \quad \forall j \in \llbracket 1,d \rrbracket, \quad u_{N+k}^{(i,j)} = 0 $ according to Lemma~\ref{8lem_unique_kj_pour_i}.
By the definition of $U_{N+k}$ in (\ref{8rec_Xn_Un}) and $V_{N+k} = \frac{U_{N+k}}{\| U_{N+k}\|}$, the $(d+i)$-th coordinate of the vector 
$\sum\limits_{k=1}^{q-1} \sum\limits_{j = 1}^d a_{j,k} V_{N+k}^j$ is therefore zero. Hence, according to (\ref{8vecteur_U}) $\sum\limits_{j = 1}^d a_{j,0} \theta^{\lfloor \alpha^N \rfloor} \sigma_{i,j,N}= \sum\limits_{j = 1}^d \frac{x_j}{y_j} \theta^{\lfloor \alpha^N \rfloor} \sigma_{i,j,N}\in \Z.$
Since the $y_j$ are powers of $\theta$ and $\max\limits_{j = 1}^d y_j = y_{j_{\max}}$, we have $\frac{y_{j_{\max}}}{y_j} \in \Z$ and:
\begin{align}\label{8divisibilite}
\sum\limits_{j = 1}^d x_j \frac{y_{j_{\max}}}{y_j} \theta^{\lfloor \alpha^N \rfloor} \sigma_{i,j,N}\in y_{j_{\max}} \Z.
\end{align}
Now we notice that for $j \neq j_{\max}$, we have $i \not\equiv N +1 +q(j-1) \pmod{qd}$ and thus $u_N^{(i,j)} = 0$ in this case.
Since $\lfloor \alpha^{N-1} \rfloor < \lfloor \alpha^N \rfloor$, we have for all $j \neq j_{\max}$, $\theta \mid \theta^{\lfloor \alpha^N \rfloor} \sigma_{i,j,N}= \theta^{\lfloor \alpha^N \rfloor} \sum\limits_{k = 0}^{N} \frac{u_k^{(i,j)}}{\theta^{\lfloor \alpha^k \rfloor}}.$
If $\theta \mid y_{j_{\max}}$, then using (\ref{8divisibilite}) $\theta \mid x_{j_{\max}} \theta^{\lfloor \alpha^N \rfloor} \sigma_{i,j_{\max},N}.$
Since $u_n^{(i,j_{\max})}$ is non-zero and coprime with $\theta$, we have $\gcd(\theta^{\lfloor \alpha^N \rfloor} \sigma_{i,j_{\max},N}, \theta) = 1$.
\\We finally deduce that $\theta \mid x_{j_{\max}}$, which is contradictory to $\gcd(x_{j_{\max}}, \theta) = 1$. All $y_j$ are therefore equal to 1, and thus all $a_{j,0}$ are integers. Returning to (\ref{8vecteur_U}), we find
$\sum\limits_{k=1}^{q-1} \sum\limits_{j = 1}^d a_{j,k} V_{N+k}^j \in \Z^n.$
Since all the vectors $V_{N+k}^j$ are distinct and come from the canonical basis, $a_{j,k}$ is an integer for all $j \in \llbracket 1, d \rrbracket$ and $k \in \llbracket 0, q-1 \rrbracket$. This shows that $(X_N^j)_{j \in \llbracket 1,d\rrbracket} \cup (V_k^j)_{j \in \llbracket 1,d\rrbracket, k \in \llbracket N+1, N+q-1\rrbracket}$ forms a $\Z$-basis of $B_{N,q} \cap \Z^n$, and in particular that $\dim(B_{N,q}) = qd$.

\bigskip
Now let $v \in \llbracket 1,q - 1 \rrbracket$. We have $(X_N^j)_{j \in \llbracket 1,d\rrbracket} \cup (V_k^j)_{j \in \llbracket 1,d\rrbracket, k \in \llbracket N+1, N+v-1\rrbracket} \subset (X_N^j)_{j \in \llbracket 1,d\rrbracket} \cup (V_k^j)_{j \in \llbracket 1,d\rrbracket, k \in \llbracket N+1, N+q-1\rrbracket}$. Since it is contained in a $\Z-$basis, the family $(X_N^j)_{j \in \llbracket 1,d\rrbracket} \cup (V_k^j)_{j \in \llbracket 1,d\rrbracket, k \in \llbracket N+1, N+v-1\rrbracket}$ forms a $\Z$-basis of the $\Z$-module it generates. This $\Z$-module is $B_{N,v} \cap \Z^n$ and the lemma is proved.

\end{proof}

Using this lemma, we can compute the height of some specific rational subspaces in Lemmas \ref{8lem_hauteur_CN}, \ref{8minoration_somme_espace}, \ref{8lem_minoration_exposant} and \ref{8lem_hauteur_DN}.

\section{Computation of the exponent in the case \texorpdfstring{$e \geq d$}{ } }
In this section, we consider $e \in \llbracket d, qd \rrbracket$. Recall that $e = q_e d + r_e$ is the Euclidean division of $e$ by $d$. In particular, we have $1 \leq q_e \leq q$. The goal of this section is to compute $\mu_n(A| e)_d$.
\subsection{Lower bound on the exponent}\label{sect_4otherequality}
In this section, we introduce a sequence of rational subspaces of dimension $e$ that approximate $A$ well, which allows us to bound $\mu_n(A|e)_d$ from below. 
Let $N \in \N$. We define the subspace $C_{N,e}$ by:
\begin{align}\label{8def_Cn}
 C_{N,e} &= \Span(X_{N+1}^1, \ldots, X_{N+q_e}^1, \ldots, X_{N+1}^d, \ldots, X_{N+q_e}^d) \bigoplus \Span(X_N^1, \ldots, X_N^{r_e}) \\
 &= B_{N+1,q_e} \bigoplus \Span(X_N^1, \ldots, X_N^{r_e}) \nonumber
\end{align}
which is a rational subspace.
Using (\ref{8rec_Xn_Un}) and reasoning by induction for each $j \in \llbracket 1,d \rrbracket$, we have:
\begin{align}\label{8CN_rec}
 C_{N,e} = \, &\Span(X_{N}^1, V_{N+1}^1, \ldots, V_{N+q_e}^1, \ldots, X_{N}^{r_e}, V_{N+1}^{r_e}, \ldots, V_{N+q_e}^{r_e}) \nonumber\\ 
 &\bigoplus \Span(X_{N+1}^{r_e+1}, V_{N+2}^{r_e+1}, \ldots, V_{N+q_e}^{r_e+1} \ldots, X_{N+1}^{d}, V_{N+2}^{d}, \ldots, V_{N+q_e}^{d}).
\end{align}

\begin{req}\label{8req_CN=BN}
 Note that in the case where $r_e = 0$, by the definition in (\ref{8def_Cn}), we have $C_{N,e} = B_{N+1,q_e}$.
\\ In all cases, we have $B_{N+1,q_e} \subset C_{N,e} \subset B_{N,q_e+1}$.
\end{req} 

\begin{lem}\label{8lem_hauteur_CN}
We have $\dim(C_{N,e}) = e$. Moreover, there exist constants $\cons \label{8cons_haut_CN_minor} > 0$ and $\cons \label{8cons_haut_CN_major} > 0$ independent of $N$ such that 
\begin{align*}
 c_{\ref{8cons_haut_CN_minor}} \theta^{ r_e {\alpha^N} + (d-r_e) {\alpha^{N+1}}} \leq H(C_{N,e}) \leq c_{\ref{8cons_haut_CN_major}} \theta^{ r_e {\alpha^N} + (d-r_e) {\alpha^{N+1}}}.
\end{align*}
\end{lem}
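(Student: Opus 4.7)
The plan is to identify an explicit $\Z$-basis of $C_{N,e} \cap \Z^n$, read off the dimension, and then estimate the norm of the wedge product of this basis, which equals $H(C_{N,e})$.

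By Remark~\ref{8req_CN=BN}, $C_{N,e} \subset B_{N,q_e+1}$, so every element of $C_{N,e} \cap \Z^n$ expands with integer coefficients in the $\Z$-basis $\BB = (X_N^j)_{j \in \llbracket 1,d\rrbracket} \cup (V_{N+k}^j)_{j \in \llbracket 1, d \rrbracket, k \in \llbracket 1, q_e\rrbracket}$ of $B_{N,q_e+1} \cap \Z^n$ given by Lemma~\ref{8lem_dimBN_zbaseBN}. Plugging $X_{N+1}^j = \lambda_N X_N^j + \|U_{N+1}^j\| V_{N+1}^j$ (with $\lambda_N = \theta^{\floor{\alpha^{N+1}}-\floor{\alpha^N}}$, from (\ref{8rec_Xn_Un})) into the generating family of (\ref{8CN_rec}), I would express a generic element of $C_{N,e}$ in $\BB$-coordinates. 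If the scalar multiplying $X_{N+1}^j$ is denoted $b_j$, its contribution to the $\BB$-coordinates is $\lambda_N b_j$ on $X_N^j$ and $\|U_{N+1}^j\| b_j$ on $V_{N+1}^j$; integrality of both, combined with $\gcd(\lambda_N, \|U_{N+1}^j\|) = 1$ (as $\theta \geq 5$ is prime and $\|U_{N+1}^j\| \in \{2, 3\}$), forces $b_j \in \Z$. All other scalars are plainly integers. It follows that
\[ \FF = (X_N^j)_{j \leq r_e} \cup (X_{N+1}^j)_{j > r_e} \cup (V_{N+1}^j)_{j \leq r_e} \cup (V_{N+k}^j)_{j \in \llbracket 1, d \rrbracket, \, k \in \llbracket 2, q_e\rrbracket} \]
is a $\Z$-basis of $C_{N,e} \cap \Z^n$; counting $|\FF| = r_e + (d-r_e) + r_e + d(q_e-1) = dq_e + r_e = e$ yields $\dim(C_{N,e}) = e$.

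The upper bound on $H(C_{N,e})$ is immediate from Hadamard's inequality applied to $\FF$, using $\|V_{N+k}^j\| = 1$ and (\ref{8norme_XN}): $H(C_{N,e}) \leq c_{\ref{8cons_major_norme_XN}}^d \, \theta^{r_e \alpha^N + (d-r_e) \alpha^{N+1}}$. For the lower bound, I would exhibit a single $e \times e$ minor of the $n \times e$ matrix of $\FF$ of the desired magnitude. Order the columns with the $X$-vectors first, and select as rows the indices $\llbracket 1, d \rrbracket$ together with the $e-d$ coordinates in $\llbracket d+1, n \rrbracket$ supporting the $V$-vectors of $\FF$ (these are distinct by Lemma~\ref{8lem_unique_kj_pour_i} applied at $N+1$). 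The submatrix is block upper-triangular: the top-left $d \times d$ block is diagonal with $r_e$ entries $\theta^{\floor{\alpha^N}}$ and $d-r_e$ entries $\theta^{\floor{\alpha^{N+1}}}$; the top-right block vanishes because the $V$'s live in coordinates $>d$; the bottom-right $(e-d) \times (e-d)$ block is a permutation matrix. The absolute value of its determinant is $\theta^{r_e \floor{\alpha^N} + (d-r_e) \floor{\alpha^{N+1}}}$, and replacing $\floor{\cdot}$ by $\cdot - 1$ yields $H(C_{N,e}) \geq \theta^{-d} \, \theta^{r_e \alpha^N + (d-r_e) \alpha^{N+1}}$.

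The main subtlety is the $\Z$-basis identification, which mirrors the delicate step in Lemma~\ref{8lem_dimBN_zbaseBN}: one exploits the primality of $\theta$ against the small integers $\|U_{N+1}^j\| \in \{2, 3\}$ to rule out denominators. Once $\FF$ is known, the height estimates are a clean pair of Hadamard and block-triangular-minor arguments.
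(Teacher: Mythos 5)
Your argument is correct in substance and follows the same overall route as the paper's proof: identify the family displayed in (\ref{8CN_rec}) as a $\Z$-basis of $C_{N,e}\cap\Z^n$, deduce $\dim(C_{N,e})=e$, bound the wedge product of this basis from above (Hadamard together with (\ref{8norme_XN}), where the paper instead factors out $\theta^{\floor{\alpha^N}}$, $\theta^{\floor{\alpha^{N+1}}}$ and uses convergence of the normalized wedge), and from below by a single nonzero \emph{integer} minor of size $e$ built from the first $d$ rows and the rows supporting the vectors $V_{N+k}^j$, exactly as in the paper's choice of $M'$. Your treatment of the $\Z$-basis step is actually more detailed than the paper's: the paper asserts that the vectors of (\ref{8CN_rec}) ``come from'' a $\Z$-basis of $B_{N+1,q}\cap\Z^n$ or $B_{N,q}\cap\Z^n$, which for $r_e>0$ does not literally apply to the vectors $X_{N+1}^j$, $j>r_e$; your Bézout argument ($b_j\,\theta^{\floor{\alpha^{N+1}}-\floor{\alpha^N}}\in\Z$ and $b_j\,\|U_{N+1}^j\|\in\Z$ with $\|U_{N+1}^j\|\in\{2,3\}$ coprime to the power of the prime $\theta\geq 5$) supplies precisely the justification that is left implicit there.

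Two small points to fix. First, your appeal to Lemma~\ref{8lem_dimBN_zbaseBN} for $B_{N,q_e+1}$ requires $q_e+1\leq q$, which fails exactly when $e=qd$ (then $q_e=q$, $r_e=0$): the lemma does not cover $v=q+1$, and indeed the family $(X_N^j)_j\cup(V_{N+k}^j)_{j,k\leq q}$ is not a $\Z$-basis of $B_{N,q+1}\cap\Z^n=\Z^n$, its determinant being $\pm\theta^{d\floor{\alpha^N}}$. In that boundary case, however, $C_{N,e}=B_{N+1,q}$ by Remark~\ref{8req_CN=BN} and Lemma~\ref{8lem_dimBN_zbaseBN} applies to it directly, so the repair is immediate (the paper separates this case for the same reason). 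Second, with your ordering of rows and columns the selected $e\times e$ submatrix is block \emph{lower}-triangular (the vanishing block is the top-right one); the determinant is still $\pm\theta^{r_e\floor{\alpha^N}+(d-r_e)\floor{\alpha^{N+1}}}$, so the bound $H(C_{N,e})\geq\theta^{-d}\,\theta^{r_e\alpha^N+(d-r_e)\alpha^{N+1}}$ stands, matching the paper's constant $c_{\ref{8cons_haut_CN_minor}}=\theta^{-d}$.
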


\begin{proof}
If $q_e = q$, then $C_{N,e} = B_{N+1,q}$ because $e = qd$ and $r_e = 0$; otherwise $q_e < q$ and in this case $C_{N,e} \subset B_{N,q}$.
In each case, Lemma~\ref{8lem_dimBN_zbaseBN} states that the vectors considered in (\ref{8CN_rec}) come from a $\Z$-basis of $B_{N+1, q} \cap \Z^n$ or $B_{N,q} \cap \Z^n$ respectively. 
The relation (\ref{8CN_rec}) gives directly $\dim(C_{N,e}) = (q_e+1)r_e + (d-r_e)q_e = q_ed + r_e =e $. 

These vectors thus form a $\Z-$basis of $C_{N,e} \cap \Z^n$. Taking up the notation $ Z_{N}^j = \frac{1}{\theta^{\floor{\alpha^N}}} X_N^j $, we have:
\begin{align}\label{8Haut_produit}
 H(C_{N,e}) = &\theta^{r_e\floor{\alpha^N} +(d-r_e)\floor{\alpha^{N+1}}} \| H_N \| \leq \theta^{r_e{\alpha^N} +(d-r_e){\alpha^{N+1}}} \| H_N \| 
\end{align}
where $H_N$ is the exterior product of the vectors 
\begin{align}\label{8vecteurs_dans_haut_produit}
 (Z_N^j)_{j \in \llbracket 1,r_e \rrbracket} \cup (Z_{N+1}^j)_{j \in \llbracket r_e +1, d \rrbracket} \cup (V_{N+k}^j)_{j \in \llbracket 1, r_e \rrbracket, k\in \llbracket 1,q_e\rrbracket} \cup (V_{N+k}^j)_{j \in \llbracket r_e +1, d \rrbracket, k\in \llbracket 2,q_e\rrbracket}.
\end{align}
We can bound this norm from above by $\|H_N\| \leq \| Z_N^1 \wedge \ldots \wedge Z_N^{r_e} \wedge Z_{N+1}^{r_e+1} \wedge \ldots \wedge Z_{N+1}^{d} \| $
since the norms of the vectors $V_k^j$ are equal to $1$. 
Now the quantity $\| Z_N^1 \wedge \ldots \wedge Z_N^{r_e} \wedge Z_{N+1}^{r_e+1} \wedge \ldots \wedge Z_{N+1}^{d} \| $ converges to $ \| Y_1 \wedge \ldots \wedge Y_d \|$ as $N$ tends to infinity and is therefore bounded independently of $N$. There exists $ c_{\ref{8cons_haut_CN_major}}> 0 $, independent of $N$, such that:
\begin{align}\label{8major_HN}
 \|H_N \| \leq c_{\ref{8cons_haut_CN_major}}.
\end{align}

Furthermore, let us define the matrix $M$ whose column vectors are the vectors of $(\ref{8vecteurs_dans_haut_produit})$. Then, by taking up the construction of the vectors $V_N^j$ in $(\ref{8rec_Xn_Un})$, $M$ takes the form 
$ M = \begin{pmatrix}
 \begin{matrix}
 I_d \\
 \Sigma_N
 \end{matrix} &
 \begin{matrix}
 0 \\
 V_N
 \end{matrix}
 \end{pmatrix}$
where $\Sigma_N$ is a matrix whose coefficients are $\sigma_{i,j,N}$ or $\sigma_{i,j,N+1}$, and $V_N$ is a matrix of $\mathcal{M}_{qd, e -d }(\Z)$ of rank $e-d$ since its columns are $e-d$ distinct vectors from the canonical basis. 
\bigskip \\
Let $\Delta$ be a non-zero minor of $V_N$ of size $e-d$. 
We can then extract a square matrix $M'$ of size $e$ from $M$ by selecting the first $d$ rows and $e-d$ among the last ones, corresponding to the minor $\Delta$. The determinant of $M'$ is an integer because it is the product of $\det(I_d)= 1$ and $\Delta$ which is a minor of a matrix in $M_{e-d}(\Z)$. Hence, we have $|\det(M') |\geq 1$. Now $\det(M')$ is a minor of size $e$ of $M$, so we have $\|H_N\| \geq |\det(M')| \geq 1$. 
 By combining this with (\ref{8Haut_produit}) and (\ref{8major_HN}), we obtain:
 \begin{align*}
 c_{\ref{8cons_haut_CN_minor}}\theta^{r_e{\alpha^N} +(d-r_e){\alpha^{N+1}}} \leq \theta^{r_e\floor{\alpha^N} +(d-r_e)\floor{\alpha^{N+1}}} \leq H(C_{N,e}) \leq c_{\ref{8cons_haut_CN_major}}\theta^{r_e{\alpha^N} +(d-r_e){\alpha^{N+1}}} 
\end{align*}
by setting $ c_{\ref{8cons_haut_CN_minor}} = \theta^{-d}$.

\end{proof}

We now focus on the angle $\psi_d(A,C_{N,e})$. To do this, we first study the angle $\psi_1(\Span(Y_1), C_{N,e})$.

\begin{lem}\label{8lem_minoration_Y1angleCNE}
There exists a constant $\cons \label{8cons_minor_angle_y1_CNe} >0$ independent of $N$ such that:
\begin{align*}
\psi_1(\Span(Y_1), C_{N,e}) \geq c_{\ref{8cons_minor_angle_y1_CNe}} \theta^{- \alpha^{N+q_e+1}}.
\end{align*}
\end{lem}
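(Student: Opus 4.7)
Since $\Span(Y_1)$ is one-dimensional, $\psi_1(\Span(Y_1), C_{N,e}) = \mathrm{dist}(Y_1, C_{N,e})/\|Y_1\|$, so it suffices to prove $\|Y_1 - P\| \geq c\,\theta^{-\alpha^{N+q_e+1}}$ for every $P \in C_{N,e}$ and some constant $c > 0$. The plan is to isolate a single coordinate of $Y_1 - P$ in which an unavoidable term of size $\theta^{-\alpha^{N+q_e+1}}$ appears. Set $i^* := (N+q_e+1) \mod qd$; the key coordinate is the $(d+1+i^*)$-th.

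Using the basis of $C_{N,e}$ furnished by (\ref{8CN_rec}), expand
\[
P = \sum_{j \leq r_e} \tilde c_j X_N^j + \sum_{j > r_e} \tilde c_j' X_{N+1}^j + \sum_{(j,\ell)} \beta_{j,\ell} V_{N+\ell}^j,
\]
where $\ell$ ranges over $\llbracket 1, q_e \rrbracket$ or $\llbracket 2, q_e\rrbracket$ depending on $j$. The crucial observation is that none of the basis vectors $V_{N+\ell}^j$ appearing here equals the canonical basis vector $e_{d+1+i^*}$: applying Lemma~\ref{8lem_unique_kj_pour_i} to $i^*$ with starting index $N+1$ shows that the only $(k,j) \in \llbracket 0, q-1\rrbracket \times \llbracket 1,d\rrbracket$ with $u_{N+1+k}^{(i^*,j)} \neq 0$ is $(q_e, 1)$, corresponding to $V_{N+q_e+1}^1$, which lies outside the range of $\ell$ allowed in (\ref{8CN_rec}). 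Hence the $V$'s contribute nothing to the $(d+1+i^*)$-th coordinate of $P$, yielding
\[
(P)_{d+1+i^*} = \sum_{j=1}^d c_j\, \sigma_{i^*, j, N_j},
\]
where $c_j = (P)_j$ and $N_j = N$ or $N+1$ according to whether $j \leq r_e$ or $j > r_e$.

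Expanding $\sigma_{i^*, j, N_j} = \sigma_{i^*, j} - \sum_{m > N_j} u_m^{(i^*, j)}/\theta^{\lfloor \alpha^m \rfloor}$ and subtracting from $(Y_1)_{d+1+i^*} = \sigma_{i^*, 1}$ gives
\[
(Y_1 - P)_{d+1+i^*} = (1-c_1)\sigma_{i^*,1} - \sum_{j \neq 1} c_j\, \sigma_{i^*, j} + \sum_{j=1}^d c_j \sum_{m > N_j} \frac{u_m^{(i^*,j)}}{\theta^{\lfloor \alpha^m\rfloor}}.
\]
A second use of Lemma~\ref{8lem_unique_kj_pour_i} shows that the innermost sum has leading term $u_{N+q_e+1}^{(i^*,1)}/\theta^{\lfloor \alpha^{N+q_e+1}\rfloor}$ when $j = 1$, while for $j \neq 1$ the first nonzero index is $m \geq N+q+1$, so that sum is $O(\theta^{-\alpha^{N+q+1}})$. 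I would then argue by contradiction: if $\|Y_1 - P\| < c\,\theta^{-\alpha^{N+q_e+1}}$, the first $d$ coordinates of $Y_1 - P$ give $|1-c_1|$ and $|c_j|$ for $j \neq 1$ each smaller than $c\,\theta^{-\alpha^{N+q_e+1}}$, hence $c_1 \geq 1/2$. The term $c_1\, u_{N+q_e+1}^{(i^*,1)}/\theta^{\lfloor \alpha^{N+q_e+1}\rfloor}$ then has absolute value at least $\theta^{-\alpha^{N+q_e+1}}$ (using $u \geq 2$), while every other contribution is $O(c\,\theta^{-\alpha^{N+q_e+1}})$. Taking $c$ small enough produces $|(Y_1 - P)_{d+1+i^*}| > c\,\theta^{-\alpha^{N+q_e+1}}$, contradicting the assumption.

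The main obstacle is the combinatorial fact that $V_{N+q_e+1}^1 \notin C_{N,e}$: without it, $C_{N,e}$ could in principle use a canonical basis vector to cancel the signal along $e_{d+1+i^*}$ and the argument would collapse. This is where the sparsity of the sequences $(u_k^{(i,j)})$ enters decisively, via Lemma~\ref{8lem_unique_kj_pour_i}. Once this is secured, the rest is a careful bookkeeping of tail contributions; mild attention is needed in the edge cases $r_e = 0$ and $q_e = q$, where the basis from (\ref{8CN_rec}) degenerates slightly, but the same combinatorial argument still rules out any $V$ equal to $e_{d+1+i^*}$.
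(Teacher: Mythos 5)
Your proof is correct, and it rests on the same two key facts as the paper's argument: the distinguished coordinate in position $d+1+i^*$ with $i^* = N+q_e+1 \bmod (qd)$, and the sparsity of the sequences $u^{(i,j)}$ (via Lemma~\ref{8lem_unique_kj_pour_i}), which produces the jump $\sigma_{i^*,1}-\sigma_{i^*,1,N_1} \geq 2\theta^{-\lfloor\alpha^{N+q_e+1}\rfloor}$ and guarantees that no $V_{N+\ell}^j$ occurring in \eqref{8CN_rec} has its nonzero coordinate in position $d+1+i^*$. The organization, however, is genuinely different. The paper minimizes $\omega(Y_1,X)$ over $X\in C_{N,e}$ written in the basis \eqref{8CN_rec} with coefficients normalized in $\ell^2$, and must then split into cases according to which coefficient is large: some $a_{0,j}$ with $j\geq 2$, some coefficient $a_{k',j'}$ with $k'>0$ sitting on a vector $V$ (handled with the different coordinate $i=N+k'+(j'-1)q \bmod (qd)$), or $a_{0,1}$; only this last case is the one you treat. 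By working with $\mathrm{dist}(Y_1,C_{N,e})$ instead, you read the coefficients $c_1,\ldots,c_d$ directly off the first $d$ coordinates ($c_1$ close to $1$, the others tiny under the contradiction hypothesis), and the coefficients of the $V$-vectors never enter at all; this collapses the paper's three cases into one and even dispenses with any ``$N$ large enough'' condition, since the $j\neq 1$ tails are multiplied by the tiny $c_j$. Two small points to fix when writing this up: justify in one line that $\psi_1(\Span(Y_1),C_{N,e})=\mathrm{dist}(Y_1,C_{N,e})/\|Y_1\|$ (the minimum of $\omega(Y_1,\cdot)$ over $C_{N,e}\setminus\{0\}$ is attained at the orthogonal projection of $Y_1$), and note that your identification of the unique pair from Lemma~\ref{8lem_unique_kj_pour_i} as $(q_e,1)$ is only valid for $q_e\leq q-1$: when $q_e=q$ (so $r_e=0$) the unique pair is $(0,2)$ (or $(0,1)$ if $d=1$), corresponding to $V_{N+1}^2$ (resp.\ $V_{N+1}^1$), which is again excluded because for $j>r_e$ the index $\ell$ in \eqref{8CN_rec} starts at $2$; so the combinatorial exclusion you need does survive the edge case, exactly as you anticipated in your closing remark, and in that case the leading term of the $j=1$ tail is still at $m=N+q_e+1$ because $N_1=N+1$ there.
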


\begin{proof}
Let $X \in C_{N,e} \setminus \{ 0\}$ such that $\psi_1(\Span(Y_1), C_{N,e}) = \omega(Y_1, X)$. We use the basis of $C_{N,e}$ as explicitly detailed in \eqref{8CN_rec}, and we denote $(a_{k,j})$ a family of real numbers such that $X$ can be written as:
\begin{align*}
\sum\limits_{j = 1}^{r_e} a_{0, j} \theta^{-\floor{\alpha^N}} X_N^j + \sum\limits_{j = r_e +1}^{d} a_{0, j} \theta^{-\floor{\alpha^{N+1}}}X_{N+1}^j + \sum\limits_{j = 1}^{r_e} \sum\limits_{k = 1}^{q_e} a_{k,j}V_{N+k}^j + \sum\limits_{j = r_e +1}^{d} \sum\limits_{k =1}^{q_e-1} a_{k,j}V_{N+k+1}^j.
\end{align*}
Without loss of generality, we assume that:
\begin{align}\label{8proof_aik_carre1}
\sum\limits_{j = 1}^{r_e} a_{0, j}^2+ \sum\limits_{j = r_e +1}^{d} a_{0, j} ^2 + \sum\limits_{j = 1}^{r_e} \sum\limits_{k = 1}^{q_e} a_{k,j}^2 + \sum\limits_{j = r_e +1}^{d} \sum\limits_{k =1}^{q_e-1} a_{k,j}^2 =1.
\end{align}
The norms of the vectors $\theta^{-\floor{\alpha^N}}X_{N}^j$ and $V_{N+k}^j$ are bounded by a constant independent of $N$, and we have $\omega(Y_1, X) = \frac{ \| Y_1 \wedge X \|}{ \| Y_1 \| \cdot \|X \|}$. Therefore, it suffices to show that if $N$ is large enough:
\begin{align}\label{8minor_angle_y1_CNe_proof}
\| Y_1 \wedge X \| \geq c_{\ref{8cons_minor_angle_y1_CNe}} \theta^{-\floor{ \alpha^{N+q_e+1}}} \geq c_{\ref{8cons_minor_angle_y1_CNe}}\theta^{-{ \alpha^{N+q_e+1}}}.
\end{align}

Recall that $Y_1 = \begin{pmatrix} 1 & 0 & \cdots & 0 & \sigma_{0,1} & \cdots & \sigma_{qd-1, 1} \end{pmatrix}^\intercal $ and let us explicit $X$ as:
\begin{align*}
\begin{pmatrix}
a_{0,1} \\
\vdots \\ 
a_{0,d} \\
\sum\limits_{j = 1}^{r_e} a_{0, j} \sigma_{0,j,N}+ \sum\limits_{j = r_e +1}^{d} a_{0, j} \sigma_{0,j,N+1} + \sum\limits_{j = 1}^{r_e} \sum\limits_{k = 1}^{q_e} a_{k,j}v_{N+k,0}^j + \sum\limits_{j = r_e +1}^{d} \sum\limits_{k =1}^{q_e-1} a_{k,j}v_{N+k+1,0}^j \\
\vdots \\
\sum\limits_{j = 1}^{r_e} a_{0, j} \sigma_{qd-1,j,N}+ \sum\limits_{j = r_e +1}^{d} a_{0, j} \sigma_{qd-1,j,N+1} + \sum\limits_{j = 1}^{r_e} \sum\limits_{k = 1}^{q_e} a_{k,j}v_{N+k,qd-1}^j + \sum\limits_{j = r_e +1}^{d} \sum\limits_{k =1}^{q_e-1} a_{k,j}v_{N+k+1,qd-1}^j
\end{pmatrix}
\end{align*}
with $V_{N+k}^j = \begin{pmatrix} 0 & \cdots & 0 &v_{N+k,0}^j & \cdots & v_{N+k,qd-1}^j \end{pmatrix}^\intercal$.
We then prove \eqref{8minor_angle_y1_CNe_proof} by considering different cases. Let $\sigma \geq 1 $ be an upper bound on the $\sigma_{i,j}$, especially for all $N \in \N$, $\sigma \geq \sigma_{i,j,N}$. 

\textbullet \: \underline{First case:} If there exists $j \in \llbracket 2, d \rrbracket$ such that
$|a_{0,j}| \geq \frac{\theta^{-\floor{ \alpha^{N+q_e+1}}}}{(\sigma qd)^2}$
then, by bounding $\| X \wedge Y_1\| $ from below by the minor of $(Y_1| X)$ corresponding to the first row and the $j$-th row, we find
$ \| X \wedge Y_1 \| \geq \left| \det \begin{pmatrix}
 1 & a_{0,1} \\
 0 & a_{0,j}
 \end{pmatrix} \right| \geq |a_{0,j}| \geq \frac{\theta^{-\floor{ \alpha^{N+q_e+1}}}}{(\sigma qd)^2}$
which yields $(\ref{8minor_angle_y1_CNe_proof})$.
\\ \textbullet \: \underline{Second case:} Otherwise $ \forall j \in \llbracket 2, d \rrbracket, \quad |a_{0,j}| < \frac{\theta^{-\floor{ \alpha^{N+q_e+1}}}}{(\sigma qd)^2}.$
According to $(\ref{8proof_aik_carre1})$, we have:
\begin{align*}
 a_{0, 1}^2 + \sum\limits_{j = 1}^{r_e} \sum\limits_{k = 1}^{q_e} a_{k,j}^2 + \sum\limits_{j = r_e +1}^{d} \sum\limits_{k =1}^{q_e-1} a_{k,j}^2 \geq 1 -(d-1) \left(\frac{\theta^{-\floor{ \alpha^{N+q_e+1}}}}{(\sigma qd)^2} \right)^2.
\end{align*}
In particular, if $N$ is large enough, there exists $(j', k')$ such that $|a_{k',j'}| \geq \frac{1}{qd}$ with 
 $k' > 0 \text{ or } (k'= 0 \text{ and } j' =1).$ First, let us consider the case $k' >0$. We then set $i = N+ k' + (j'-1)q \mod(qd)$. By definition of $u_{N+k}^{(i,j)}$ (and thus of $v_{N+k,i}^j$) in $(\ref{8construc_suite_u})$, we have:
\begin{align*}
 v_{N+k',i}^{j'} = 1 \text{ and } \forall (j,k) \neq (j',k'), \quad v_{N+k,i}^j = 0.
\end{align*}
By bounding $\| X \wedge Y_1\| $ from below by the minor of $(Y_1| X)$ corresponding to the first row and the $(i+1+d)$-th row, we find:
\begin{align*}
 \| X \wedge Y_1 \| &\geq \left| \det \begin{pmatrix}
 1 & a_{0,1} \\
 \sigma_{i,1} & \sum\limits_{j = 1}^{r_e} a_{0, j} \sigma_{i,j,N}+ \sum\limits_{j = r_e +1}^{d} a_{0, j} \sigma_{i,j,N+1} + a_{k',j'}v_{N+k',i}^{j'} 
 \end{pmatrix} \right|\\
 &= \left| a_{0,1}(\sigma_{i,1,N}- \sigma_{i,1} ) + \sum\limits_{j = 2}^{r_e} a_{0, j} \sigma_{i,j,N}+ \sum\limits_{j = r_e +1}^{d} a_{0, j} \sigma_{i,j,N+1} + a_{k',j'}v_{N+k',i}^{j'} \right|.
\end{align*}
Now, $|\sigma_{i,1,N}- \sigma_{i,1}| = \sum\limits_{k= N+1}^{+ \infty} \frac{u_{k}^{(i,1)}}{\theta^{\floor{\alpha^k}}} \leq 4 \theta^{-\floor{\alpha^{N+1}}}$ if $N$ is large enough. Thus,
\begin{align*}
 \| X \wedge Y_1 \| &\geq |a_{j',k'}v_{N+k',i}^{j'}| -| a_{0,1}(\sigma_{i,1,N}- \sigma_{i,1} )| -\left| \sum\limits_{j = 2}^{r_e} a_{0, j} \sigma_{i,j,N}\right| - \left| \sum\limits_{j = r_e +1}^{d} a_{0, j} \sigma_{i,j,N+1} \right| \\
 &\geq \frac{1}{qd} - 4 \theta^{-\floor{\alpha^{N+1}}} - d \sigma \left(\frac{\theta^{-\floor{ \alpha^{N+q_e+1}}}}{(\sigma qd)^2}\right) \\
 &\geq \theta^{-\floor{ \alpha^{N+q_e+1}}}
\end{align*}
if $N$ is large enough, which proves $(\ref{8minor_angle_y1_CNe_proof})$. Now, let us suppose that $k' = 0 $ and $j'= 1$, so $|a_{0,1}| \geq \frac{1}{qd}$. We set $ i = N+q_e +1 \mod(qd)$. By definition of $u_{N+k}^{(i,j)}$ in $(\ref{8construc_suite_u})$, we have 
 $\forall k \in \llbracket 1, q_e \rrbracket, \quad \forall j \in \llbracket 2, d \rrbracket, \quad v_{N+k,i}^j = 0$
and $u_{N+1}^{(i,1)} = \ldots = u_{N+q_e}^{(i,1)} = 0$ and $u_{N+q_e+ 1}^{(i,1)} \in \{2,3\}$. 
In particular, $\sigma_{i,1,N}= \sum\limits_{k=0}^{N} \frac{u_{k}^{(i,1)}}{\theta^{\floor{\alpha^k}}} = \sum\limits_{k=0}^{N+q_e} \frac{u_{k}^{(i,1)}}{\theta^{\floor{\alpha^k}}} = \sigma_{i,1,N+q_e}$ and $ \sum\limits_{j = 1}^{r_e} \sum\limits_{k = 1}^{q_e} a_{k,j}v_{N+k,i}^j + \sum\limits_{j = r_e +1}^{d} \sum\limits_{k =1}^{q_e-1} a_{k,j}v_{N+k+1,i}^j = 0.$
By bounding $\| X \wedge Y_1\| $ from below by the minor of $(X| Y_1)$ corresponding to the first row and the $(i+1+d)$-th row, we find:
\begin{align*}
 \| X \wedge Y_1 \| &\geq \left| \det\begin{pmatrix}
 1 & a_{0,1} \\
 \sigma_{i,1} & \sum\limits_{j = 1}^{r_e} a_{0, j} \sigma_{i,j,N}+ \sum\limits_{j = r_e +1}^{d} a_{0, j} \sigma_{i,j,N+1} 
 \end{pmatrix} \right| \\ &= \left| a_{0,1}(\sigma_{i,1,N}- \sigma_{i,1} ) + \sum\limits_{j = 2}^{r_e} a_{0, j} \sigma_{i,j,N}+ \sum\limits_{j = r_e +1}^{d} a_{0, j} \sigma_{i,j,N+1} \right|.
\end{align*}
Now, $|\sigma_{i,1,N}- \sigma_{i,1}| = |\sigma_{i,1,N+q_e} - \sigma_{i,1}| = \sum\limits_{k= N+q_e+1}^{+ \infty} \frac{u_{k}^{(i,1)}}{\theta^{\floor{\alpha^k}}} \geq \frac{u_{N+q_e +1 }^{(i,1)}}{\theta^{\floor{\alpha^{N+q_e+1}}}} \geq 2 \theta^{-\floor{\alpha^{N+q_e+1}}}$. Therefore,
\begin{align*}
 \| X \wedge Y_1 \| &\geq |a_{0,1}(\sigma_{i,1,N}- \sigma_{i,1} )| - \left| \sum\limits_{j = 2}^{r_e} a_{0, j} \sigma_{i,j,N}\right| - \left| \sum\limits_{j = r_e +1}^{d} a_{0, j} \sigma_{i,j,N+1} \right| \\
 &\geq \frac{2 \theta^{-\floor{\alpha^{N+q_e+1}}}}{qd}- d \sigma \left(\frac{\theta^{-\floor{ \alpha^{N+q_e+1}}}}{(\sigma qd)^2}\right) \\
 &\geq \frac{\theta^{-\floor{ \alpha^{N+q_e+1}}}}{qd}
\end{align*}
which proves $(\ref{8minor_angle_y1_CNe_proof})$. Therefore, $(\ref{8minor_angle_y1_CNe_proof})$ holds in all cases, which concludes the proof of the lemma.

\end{proof}

We can now estimate the last angle (corresponding to $j = d) $ between $A$ and $C_{N,e}$.

\begin{lem}\label{8lem_psid_ACNe}
There exist constants $\cons \label{8cons_minor_angle_psid} >0$ and $ \cons \label{8cons_major_angle_psid}>0$ independent of $N$ such that 
\begin{align*}
c_{\ref{8cons_minor_angle_psid}} H(C_{N,e})^{\frac{- \alpha^{q_e +1}}{r_e + (d-r_e)\alpha}} \leq \psi_d(A, C_{N,e}) \leq c_{\ref{8cons_major_angle_psid}} H(C_{N,e})^{\frac{- \alpha^{q_e +1}}{r_e + (d-r_e)\alpha}}.
\end{align*}
\end{lem}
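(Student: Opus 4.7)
The plan is to derive both inequalities from the characterization of $\psi_d$ as the sine of the largest principal angle between $A$ and $C_{N,e}$. Since $\dim A = d \leq e = \dim C_{N,e}$, one has
\[ \psi_d(A, C_{N,e}) = \max_{X \in A \setminus \{0\}} \omega(X, C_{N,e}), \]
where $\omega(X, C_{N,e}) := \min_{Z \in C_{N,e} \setminus \{0\}} \omega(X, Z)$. With this in hand, both bounds reduce to showing that $\psi_d(A, C_{N,e})$ is sandwiched between constant multiples of $\theta^{-\alpha^{N+q_e+1}}$; the conversion to the form involving $H(C_{N,e})$ is then a routine computation via Lemma~\ref{8lem_hauteur_CN}.

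For the lower bound, the hard work is already done in Lemma~\ref{8lem_minoration_Y1angleCNE}. Taking $X = Y_1 \in A$ in the maximum above immediately gives
\[ \psi_d(A, C_{N,e}) \geq \omega(Y_1, C_{N,e}) = \psi_1(\Span(Y_1), C_{N,e}) \geq c_{\ref{8cons_minor_angle_y1_CNe}} \theta^{-\alpha^{N+q_e+1}}. \]

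For the upper bound, I would take an arbitrary $X = \sum_{j=1}^d \lambda_j Y_j \in A \setminus \{0\}$ and use the explicit approximation $X' = \sum_{j=1}^d \lambda_j Z_{N+q_e}^j \in C_{N,e}$, well-defined in $C_{N,e}$ since $X_{N+q_e}^j \in B_{N+1,q_e} \subset C_{N,e}$ for every $j \in \llbracket 1, d \rrbracket$ (Remark~\ref{8req_CN=BN}). The approximation estimate (\ref{8major_Yj_XNj}) gives $\|Y_j - Z_{N+q_e}^j\| \leq c_{\ref{8cons_major_angle_Yj_XNj}} \theta^{-\alpha^{N+q_e+1}}$, and the linear independence of $(Y_1, \ldots, Y_d)$ yields $\max_j |\lambda_j| \leq c \|X\|$ for some constant $c$ independent of $X$ and $N$ (equivalence of norms on a finite-dimensional space). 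Combining these, $\|X - X'\| \leq C \|X\| \theta^{-\alpha^{N+q_e+1}}$, so
\[ \omega(X, C_{N,e}) \leq \omega(X, X') \leq \|X - X'\|/\|X\| \leq C \theta^{-\alpha^{N+q_e+1}}, \]
and taking the maximum over $X$ yields the claimed upper bound.

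Finally, Lemma~\ref{8lem_hauteur_CN} states that $H(C_{N,e})$ is sandwiched (up to constants) by $\theta^{\alpha^N(r_e + (d-r_e)\alpha)}$. Raising to the power $-\alpha^{q_e+1}/(r_e + (d-r_e)\alpha)$ shows that $H(C_{N,e})^{-\alpha^{q_e+1}/(r_e + (d-r_e)\alpha)}$ is likewise comparable to $\theta^{-\alpha^{N+q_e+1}}$, which converts both previously obtained bounds on $\psi_d(A, C_{N,e})$ into the desired form. The only substantive obstacle has already been cleared by Lemma~\ref{8lem_minoration_Y1angleCNE}; what remains here is the principal-angle characterization of $\psi_d$ and a routine norm comparison.
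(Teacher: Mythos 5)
Your proof is correct and shares the paper's skeleton — lower bound from Lemma~\ref{8lem_minoration_Y1angleCNE}, upper bound from the proximity estimate (\ref{8major_Yj_XNj}) at index $N+q_e$, and conversion to $H(C_{N,e})$ via Lemma~\ref{8lem_hauteur_CN} — but it glues these ingredients together differently. The paper obtains the upper bound by the monotonicity $\psi_d(A,C_{N,e}) \leq \psi_d(A,\Span(X_{N+q_e}^1,\ldots,X_{N+q_e}^d))$ (corollary of Lemma~12 of \cite{Schmidt}) followed by Theorem~1.2 of \cite{joseph_spectre}, which bounds the last angle between two $d$-dimensional subspaces by a sum of the angles $\psi_1(\Span(Y_j),\Span(X_{N+q_e}^j))$, and it gets the lower bound $\psi_d(A,C_{N,e})\geq \psi_1(\Span(Y_1),C_{N,e})$ from Lemma~2.3 of \cite{joseph_spectre}. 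You instead rely on the characterization of $\psi_d(A,B)$, for $\dim A = d \leq \dim B$, as $\max_{X\in A\setminus\{0\}}\min_{Z\in B\setminus\{0\}}\omega(X,Z)$, which makes the lower bound immediate and lets you prove the upper bound by the explicit approximation $X'=\sum_j\lambda_j Z_{N+q_e}^j\in C_{N,e}$ of an arbitrary $X=\sum_j\lambda_j Y_j$; this is more self-contained, replacing the two cited results by an elementary computation, and your estimates are sound (indeed $\max_j|\lambda_j|\leq\|X\|$ follows directly from the first $d$ coordinates of $X$, and $\omega(X,X')\leq \|X-X'\|/\|X\|$ holds because $X\wedge X'=(X-X')\wedge X'$; the bound is then uniform in $X$ and valid for $N$ large, the finitely many remaining $N$ being absorbed into the constant). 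The one ingredient you assert without justification is precisely the max--min identity for the last angle: it is a standard fact on principal angles (equivalent to $\psi_d(A,B)=\max_{X\in A\setminus\{0\}}\|X-p_B(X)\|/\|X\|$, i.e.\ to the smallest singular value characterization), but since the paper never states it in this form you should either cite it or include the short verification; only one half of it (the lower-bound direction) is covered by Lemma~2.3 of \cite{joseph_spectre}, and the other half is exactly what your upper bound needs.
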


\begin{proof}
Recall that for $j \in \llbracket 1,d \rrbracket$ and $N \in \N, \psi_1(\Span(Y_j), \Span(X_{N+q_e}^j)) \leq c_{\ref{8cons_major_angle_Yj_XNj}} \theta^{-\alpha^{N+q_e+1}}$ according to $(\ref{8major_Yj_XNj})$. By construction of $C_{N,e}$, we have $\Span(X_{N+q_e}^1, \ldots, X_{N+q_e}^d) \subset C_{N,e}$ and thus 
$$ \psi_d(A,C_{N,e}) \leq \psi_d(A, \Span(X_{N+q_e}^1, \ldots, X_{N+q_e}^d))$$
by the corollary of Lemma~12 of \cite{Schmidt}. According to Theorem~$1.2$ of \cite{joseph_spectre}, we have:
\begin{align*}
\psi_d(A, \Span(X_{N+q_e}^1, \ldots, X_{N+q_e}^d)) &\leq c_{\ref{8cons_maj_prop45_appliquee}} \sum\limits_{j=1}^{d} \psi_1(\Span(Y_j), \Span(X_{N+q_e}^j) ) 
\end{align*}
where $\cons \label{8cons_maj_prop45_appliquee} $ depends on $Y_1, \ldots, Y_d $ and $n$.
Hence,
\begin{align*}
\psi_d(A, \Span(X_{N+q_e}^1, \ldots, X_{N+q_e}^d)) &\leq c_{\ref{8cons_maj_prop45_appliquee}}c_{\ref{8cons_major_angle_Yj_XNj}} d \theta^{-{\alpha^{N+q_e +1 }} }\\
&\leq c_{\ref{8cons_maj_prop45_appliquee}}c_{\ref{8cons_major_angle_Yj_XNj}} d c_{\ref{8cons_haut_CN_major}}^{\frac{{\alpha^{N+q_e +1 }}}{r_e{\alpha^N} +(d-r_e){\alpha^{N+1}} }} H(C_{N,e})^{\frac{-{\alpha^{N+q_e +1 }}}{r_e{\alpha^N} +(d-r_e){\alpha^{N+1}} }} \\
&= c_{\ref{8cons_major_angle_psid}} H(C_{N,e})^{\frac{-{\alpha^{q_e +1 }}}{r_e +(d-r_e)\alpha }} 
\end{align*}
since $H(C_{N,e}) \leq c_{\ref{8cons_haut_CN_major}} \theta^{r_e{\alpha^N} +(d-r_e){\alpha^{N+1}}} $ by Lemma~\ref{8lem_hauteur_CN}. This proves the upper bound of the lemma with $c_{\ref{8cons_major_angle_psid}} = c_{\ref{8cons_maj_prop45_appliquee}}c_{\ref{8cons_major_angle_Yj_XNj}} d c_{\ref{8cons_haut_CN_major}}^{\frac{{\alpha^{q_e +1 }}}{r_e +(d-r_e){\alpha} }} $.

\bigskip 

To establish the lower bound, we use the fact that $\psi_d(A,C_{N,e}) \geq 
\psi_1(\Span(Y_1), C_{N,e}) $ according to Lemma~2.3 of \cite{joseph_spectre}. Lemma~\ref{8lem_minoration_Y1angleCNE} then gives 
$\psi_d(A,C_{N,e}) \geq c_{\ref{8cons_minor_angle_y1_CNe}} \theta^{- \alpha^{N+q_e+1}}.$
Since $H(C_{N,e}) \geq c_{\ref{8cons_haut_CN_minor}} \theta^{r_e{\alpha^N} +(d-r_e){\alpha^{N+1}}} $ according to Lemma~\ref{8lem_hauteur_CN}, we have the lower bound 
\begin{align*}
\psi_d(A,C_{N,e}) \geq c_{\ref{8cons_minor_angle_y1_CNe}}c_{\ref{8cons_haut_CN_minor}}^{\frac{{\alpha^{N+q_e +1 }}}{r_e{\alpha^N} +(d-r_e){\alpha^{N+1}} }} H(C_{N,e})^{\frac{{-\alpha^{N+q_e +1 }}}{r_e{\alpha^N} +(d-r_e){\alpha^{N+1}} }} = c_{\ref{8cons_minor_angle_psid}}H(C_{N,e})^{\frac{-{\alpha^{q_e +1 }}}{r_e +(d-r_e)\alpha }} 
\end{align*}
where $c_{\ref{8cons_minor_angle_psid}} = c_{\ref{8cons_minor_angle_y1_CNe}}c_{\ref{8cons_haut_CN_minor}}^{\frac{{\alpha^{q_e +1 }}}{r_e +(d-r_e){\alpha} }} $, proving the lower bound of the lemma.

\end{proof}

We have thus constructed an infinite family of rational subspaces $C_{N,e}$ of dimension $e$ such that 
$\psi_d(A,C_{N,e}) \leq c_{\ref{8cons_major_angle_psid}}H(C_{N,e})^{\frac{-{\alpha^{q_e +1 }}}{r_e +(d-r_e)\alpha }} $
which implies in particular that $\mu_n(A|e)_d \geq \frac{{\alpha^{q_e +1 }}}{r_e +(d-r_e)\alpha }.$

\subsection{Upper bound on the exponent}
We shall now show that the lower bound found in the previous section is optimal, meaning that we will bound $\mu_n(A|e)_d$ from above by $\frac{{\alpha^{q_e +1 }}}{r_e +(d-r_e)\alpha }$. We state a first technical lemma which will be useful in the proof of the upper bound. This lemma actually generalizes the lower bound of Lemma~\ref{8lem_hauteur_CN}.
\\ Recall that for $N$ and $v$ two positive integers, we have defined:
\begin{align*}
B_{N,v} = \Span(X_N^1, X_{N+1}^1,\ldots, X_{N+v-1}^1, X_N^2 \ldots, X_{N+v-1}^2, \ldots, X_N^d,\ldots, X_{N+v-1}^d).
\end{align*}

\begin{lem}\label{8minoration_somme_espace}
Let $N \in \N$, $v \in \llbracket 1,q -1 \rrbracket $, and $r \in \llbracket 0, d-1 \rrbracket $. For $W $ a rational subspace of $\Span(X_N^1, \ldots, X_N^d)$ of dimension $r$, we have:
\begin{align*}
H(B_{N+1,v}\oplus W) \geq c_{\ref{8cons_minoration_somme_espace}} \theta^{ r {\alpha^N} +(d-r) {\alpha^{N+1}}} 
\end{align*}
where $\cons \label{8cons_minoration_somme_espace} >0$ is independent of $N$ and $W$.
\end{lem}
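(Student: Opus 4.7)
The approach is to follow the template of Lemma~\ref{8lem_hauteur_CN}: exhibit a family of $dv+r$ integer vectors in $\Lambda := (B_{N+1,v} \oplus W) \cap \Z^n$ whose wedge norm can be bounded below via a single explicit minor, and then control the index of the sublattice they generate inside $\Lambda$.

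Concretely, I pick a $\Z$-basis $T_1, \ldots, T_r$ of $W \cap \Z^n$. Since $W \subset \Span(X_N^1, \ldots, X_N^d)$ and, by Lemma~\ref{8lem_dimBN_zbaseBN}, the $X_N^j$ form a $\Z$-basis of $B_{N,1} \cap \Z^n$, each $T_i$ writes uniquely as $T_i = \sum_j a_{i,j} X_N^j$ with $a_{i,j} \in \Z$. The $r \times d$ integer matrix $A = (a_{i,j})$ has rank $r$, so I fix $R \subset \llbracket 1, d\rrbracket$ of size $r$ such that the $r \times r$ submatrix $A_R$ (columns in $R$) satisfies $|\!\det A_R| \geq 1$. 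I then consider the family
\[
\mathcal{F} = (T_i)_{i \in \llbracket 1, r\rrbracket} \cup (X_{N+1}^j)_{j \in \llbracket 1, d\rrbracket} \cup (V_{N+k}^{j''})_{k \in \llbracket 2, v\rrbracket,\, j'' \in \llbracket 1, d\rrbracket}
\]
of $dv+r$ integer vectors in $\Lambda$; let $\Lambda'$ be the sublattice they $\Z$-span and $\omega$ their exterior product.

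To lower bound $\|\omega\|$, I compute the $(dv+r) \times (dv+r)$ minor of the matrix of $\mathcal{F}$ obtained by keeping the rows of coordinates $\{1, \ldots, d\}$, $\{d+1+i(1, j') : j' \in R\}$, and $\{d+1+i(k, j'') : k \in \llbracket 2, v\rrbracket, j'' \in \llbracket 1, d\rrbracket\}$, where $i(k, j) = (N+k) + (j-1)q \bmod qd$. The combinatorial input from Lemma~\ref{8lem_unique_kj_pour_i} is that $u_{N+1}^{(i(k, j''), j)} = 0$ for $k \geq 2$, so $\sigma_{i(k, j''), j, N+1} = \sigma_{i(k, j''), j, N}$ on those rows; while on each row $d+1+i(1, j')$, only the column $X_{N+1}^{j'}$ acquires an extra integer contribution, equal to $\|U_{N+1}^{j'}\| \in \{2, 3\}$. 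Row reduction using the first $d$ rows then clears all the $\sigma$-contributions and produces a block-triangular matrix whose determinant is $\pm \theta^{\floor{\alpha^N} r + \floor{\alpha^{N+1}}(d-r)} \det(A_R) \prod_{j' \in R} \|U_{N+1}^{j'}\|$. This yields $\|\omega\| \geq \theta^{-d} \theta^{r\alpha^N + (d-r)\alpha^{N+1}}$.

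Finally, to convert this into a bound on $H(B_{N+1,v} \oplus W) = \covol(\Lambda)/[\Lambda : \Lambda']$, I bound the index via the projection $\pi_W : \Lambda \to W$ along $B_{N+1, v}$. Its kernel is $B_{N+1,v} \cap \Z^n$, which by Lemma~\ref{8lem_dimBN_zbaseBN} coincides with the kernel of $\pi_W|_{\Lambda'}$, so $[\Lambda : \Lambda'] = [\pi_W(\Lambda) : \pi_W(\Lambda')]$. A direct calculation using $X_{N+1}^j = \theta^{\floor{\alpha^{N+1}} - \floor{\alpha^N}} X_N^j + \|U_{N+1}^j\| V_{N+1}^j$ and $\gcd(\theta, \|U_{N+1}^j\|) = 1$ shows that the $X_N^j$-coordinate of any element of $\pi_W(\Lambda)$ lies in $(1/\|U_{N+1}^j\|)\Z$, whence $[\Lambda : \Lambda'] \leq \prod_j \|U_{N+1}^j\| \leq 3^d$. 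The constant $c_{\ref{8cons_minoration_somme_espace}} := 3^{-d}\theta^{-d}$, independent of $N$ and $W$, then works. The main obstacle is the bookkeeping inside the minor computation: identifying precisely which $\sigma$-coefficients simplify and which pick up an integer correction from $U_{N+1}$, so that the matrix factors cleanly into its block-triangular form.
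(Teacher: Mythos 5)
Your proposal is correct and takes essentially the same route as the paper: the same generating family of $(B_{N+1,v}\oplus W)\cap\Z^n$, a lower bound for the norm of its exterior product via an integer minor (you compute an explicit one by row reduction using Lemma~\ref{8lem_unique_kj_pour_i}, while the paper exhibits a nonzero integer minor after substituting $X_N^j=\theta^{\lfloor\alpha^N\rfloor-\lfloor\alpha^{N+1}\rfloor}(X_{N+1}^j-U_{N+1}^j)$), and an upper bound on the index of the generated sublattice which, like the paper's bound on $N(I)$, ultimately rests on the $\Z$-basis of $B_{N,v+1}\cap\Z^n$ from Lemma~\ref{8lem_dimBN_zbaseBN} (the paper gets $6^{2d}$, your projection along $B_{N+1,v}$ gives $3^{d}$). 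The only slip is the displayed height identity, which should read $H(B_{N+1,v}\oplus W)=\covol(\Lambda)=\covol(\Lambda')/[\Lambda:\Lambda']$, the numerator being the norm of the exterior product of your family $\mathcal{F}$, not $\covol(\Lambda)/[\Lambda:\Lambda']$; with this correction your two estimates combine exactly as intended.
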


\begin{proof}
We fix $ U_1, \ldots, U_r $ be a $\Z$-basis of $ W \cap \Z^n $. Since $ W \subset \Span(X_N^1, \ldots, X_N^d) $ and these vectors form a $\Z$-basis of $\Span(X_N^1, \ldots, X_N^d) \cap \Z^n$, we can write for $ i \in \llbracket 1, r \rrbracket $:
\begin{align}\label{8baseW}
U_i = \sum\limits_{j = 1}^d a_{i,j} X_N^j
\end{align}
with $ a_{i,j} \in \Z $.
We note that $ B_{N+1,v} \oplus W $ is a direct sum and has dimension $\dim(B_{N+1,v}) + r = vd + r $ according to Lemma~\ref{8lem_dimBN_zbaseBN}.
The same lemma states also that the vectors $ (X_{N+1}^j)_{j \in \llbracket 1,d \rrbracket} \cup (V_k^j)_{j \in \llbracket 1,d \rrbracket, k \in \llbracket N+2, N+v \rrbracket} $ form a $\Z$-basis of $ B_{N+1,v} \cap \Z^n $.
By concatenating this base of $ B_{N+1,v} $ with the chosen basis $(U_1, \ldots, U_r)$ of $ W $, we form a basis of the real vector subspace $ B_{N+1,v} \oplus W $ (but not necessarily of the $\Z$-module $ (B_{N+1,v} \oplus W) \cap \Z^n $). Moreover, since the vectors of this basis have integers coordinates, the formula $(7)$ stated in \cite[section 3]{Schmidt} gives the height of $ B_{N+1,v} \oplus W $:
\begin{align}\label{8hauteur_somme}
H(B_{N+1,v} \oplus W) = \frac{\left\| (\bigwedge_{j =1}^d X_{N+1}^j) \wedge \left( \bigwedge_{k = N+2}^{N+v} (V_k^1 \wedge \ldots \wedge V_k^d) \right) \wedge U_1 \wedge \ldots \wedge U_r \right\| }{N(I)}
\end{align}
where $N( I ) $ is the norm of the ideal $I$ generated by the Grassmannian coordinates associated with this basis, which, as a reminder, are the minors of size $ vd + r $ of the matrix associated with the vectors $ (X_{N+1}^j)_{j \in \llbracket 1,d \rrbracket} \cup (V_k^j)_{j \in \llbracket 1,d \rrbracket, k \in \llbracket N+2, N+v \rrbracket} \cup (U_i)_{i \in \llbracket 1,r \rrbracket} $.
According to (\ref{8baseW}) we have for $ i \in \llbracket 1,r \rrbracket $:
\begin{align*}
U_i = \sum\limits_{j = 1}^d a_{i,j}X_N^j = \sum\limits_{j = 1}^d a_{i,j}\frac{X_{N+1}^j -U_{N+1}^j}{\theta^{\lfloor \alpha^{N+1} \rfloor - \lfloor \alpha^N \rfloor} }
\end{align*}
using also the formula (\ref{8rec_Xn_Un}).
Thus:
\begin{align}
&\| (\bigwedge_{j =1}^d X_{N+1}^j) \wedge \left( \bigwedge_{k = N+2}^{N+v} (V_k^1 \wedge \ldots \wedge V_k^d) \right) \wedge U_1 \wedge \ldots \wedge U_r \| \nonumber \\
&= \left(\frac{1}{\theta^{\lfloor \alpha^{N+1} \rfloor - \lfloor \alpha^N \rfloor}}\right)^r \| (\bigwedge_{j =1}^d X_{N+1}^j) \wedge \left( \bigwedge_{k = N+2}^{N+v} (V_k^1 \wedge \ldots \wedge V_k^d) \right) \wedge \sum\limits_{j = 1}^d a_{1,j}U_{N+1}^j \wedge \ldots \wedge \sum\limits_{j = 1}^d a_{r,j}U_{N+1}^j \| \nonumber \\
&= \theta^{ r \lfloor \alpha^N \rfloor +(d-r) \lfloor \alpha^{N+1} \rfloor} \| (\bigwedge_{j =1}^d Z_{N+1}^j) \wedge \left( \bigwedge_{k = N+2}^{N+v} (V_k^1 \wedge \ldots \wedge V_k^d) \right) \wedge \sum\limits_{j = 1}^d a_{1,j}U_{N+1}^j \wedge \ldots \wedge \sum\limits_{j = 1}^d a_{r,j}U_{N+1}^j \| \label{8minot_prod_ext}
\end{align}
with $ Z_{N+1}^j = \frac{1}{\theta^{\lfloor \alpha^{N+1} \rfloor}} X_{N+1}^j $.
Now the norm of the exterior product that appears in (\ref{8minot_prod_ext}) can be bounded below by the absolute value of any minor of size $ dv + r $ of the matrix $ M $ whose columns are the vectors $ (Z_{N+1}^j)_{j \in \llbracket 1, d \rrbracket} \cup (V_k^j)_{j \in \llbracket 1,d \rrbracket, k \in \llbracket N+2, N+v \rrbracket} \cup (\sum\limits_{j = 1}^d a_{i,j}U_{N+1}^j)_{i \in \llbracket 1,r \rrbracket} $. We have
 $M = \begin{pmatrix}
 \begin{matrix}
 I_d \\
 \Sigma_{N+1}
 \end{matrix} &
 \begin{matrix}
 A
 \end{matrix}
 \end{pmatrix} \in \text{M}_{n,dv+r}(\R)$
where $ \Sigma_{N+1} = (\sigma_{i,j }^{N+1})_{ i \in \llbracket 0,qd-1 \rrbracket, j \in \llbracket 1,d \rrbracket } $ and $ A \in \text{M}_{n,d(v-1)+r}(\R) $ whose columns are the vectors $ (V_k^j)_{j \in \llbracket 1,d \rrbracket, k \in \llbracket N+2, N+v \rrbracket} \cup (\sum\limits_{j = 1}^d a_{i,j}U_{N+1}^j)_{i \in \llbracket 1,r \rrbracket} $.
According to the construction of the vectors $ V_k^j $ and $ U_{N+1}^j $ in (\ref{8rec_Xn_Un}) and since $ a_{i,j} \in \Z $, there exists $ A' \in \text{M}_{n-d, d(v-1) + r }(\Z) $ a matrix with integer coefficients such that:
\begin{align*}
 M = \begin{pmatrix}
 \begin{matrix}
 I_d \\
 \Sigma_{N+1}
 \end{matrix} &
 \begin{matrix}
 A
 \end{matrix}
 \end{pmatrix} = \begin{pmatrix}
 \begin{matrix}
 I_d \\
 \Sigma_{N+1}
 \end{matrix} &
 \begin{matrix}
 0 \\
 A'
 \end{matrix}
 \end{pmatrix}.
\end{align*}
Moreover, the matrix $A'$ has rank $d(v-1) + r$ because $\operatorname{rank}(M) = dv + r$ since the columns of $M$ are vectors from a basis. Thus, we can extract a minor from
$\begin{pmatrix}
 \begin{matrix}
 I_d \\
 \Sigma_{N+1}
 \end{matrix} &
 \begin{matrix}
 0 \\
 A'
 \end{matrix}
\end{pmatrix}$
that is non-zero and integer. In particular, it is bounded below in absolute value by 1, and by referring to (\ref{8minot_prod_ext}) we have:
\begin{align}\label{8derniere_mino_prod}
 \| (\bigwedge\limits_{j =1}^d X_{N+1}^j) \wedge \left(\bigwedge\limits_{k = N+2}^{N+v} (V_k^1 \wedge \ldots \wedge V_k^d) \right) \wedge U_1 \wedge \ldots \wedge U_r \| \geq \theta^{ r \lfloor\alpha^N\rfloor +(d-r) \lfloor\alpha^{N+1}\rfloor}.
\end{align}
According to (\ref{8hauteur_somme}), it remains to show that $N(I)$ is bounded by a constant depending only on $A$. For this, we show that the family $(X_{N+1}^j)_{j \in \llbracket 1,d\rrbracket} \cup (V_k^j)_{j \in \llbracket 1,d\rrbracket, k \in \llbracket N+2, N+v\rrbracket} \cup (U_i)_{i \in \llbracket 1,r \rrbracket}$ forms "almost" a $\Z$-basis of $(B_{N+1,v}\oplus W)\cap \Z^n$. With this aim in mind, let $(u_j)_{j \in \llbracket 1,d\rrbracket} \cup (w_{j,k})_{j \in \llbracket 1,d\rrbracket, k \in \llbracket N+2, N+v\rrbracket} \cup (v_i)_{i \in \llbracket 1,r \rrbracket} \in [0,1]^{r + vd}$ such that:
\begin{align}\label{8Xentier_proof}
 X = \sum\limits_{ j = 1}^d u_j X_{N+1}^j + \sum\limits_{\underset{j \in \llbracket 1,d\rrbracket }{k \in \llbracket N+2, N+v\rrbracket} } w_{j,k} V_k^j + \sum\limits_{ i= 1}^r v_i U_i \in \Z^n.
\end{align}
We decompose $X$ in the $\Z$-basis of $B_{N,v+1} \cap \Z^n$ explicitly described in Lemma~\ref{8lem_dimBN_zbaseBN}.
We have $ U_i = \sum\limits_{j = 1}^d a_{i,j}X_N^j $ according to (\ref{8baseW}), and $X_{N+1}^j = \theta^{\lfloor\alpha^{N+1}\rfloor - \lfloor\alpha^{N}\rfloor}X_N^j + \| U_{N+1}^j\|V_{N+1}^j$ using the formula (\ref{8rec_Xn_Un}).
For $k \in \llbracket N+2, N+v\rrbracket $, the $V_k^j$ do not appear in the decomposition of $U_i$ and $X_{N+1}^j$. By definition of a $\Z$-basis we then have $\forall j \in \llbracket 1,d\rrbracket, \quad \forall k \in \llbracket N+2, N+v\rrbracket, \quad w_{j,k} \in \Z .$
In particular, $ \sum\limits_{\underset{j \in \llbracket 1,d\rrbracket }{k \in \llbracket N+2, N+v\rrbracket} } w_{j,k} V_k^j \in \Z^n $ and the relation (\ref{8Xentier_proof}) then gives
$ \sum\limits_{ j = 1}^d u_j \left(\theta^{\lfloor\alpha^{N+1}\rfloor - \lfloor\alpha^{N}\rfloor}X_N^j + \| U_{N+1}^j\|V_{N+1}^j\right) + \sum\limits_{j = 1}^d \left(\sum\limits_{ i= 1}^r v_i a_{i,j} \right) X_N^j \in \Z^n.$
According to Lemma~\ref{8lem_dimBN_zbaseBN}, the $V_{N+1}^j$ and $X_N^j$ form a $\Z$-basis of $V \cap \Z^n$, where $V$ is the real vector subspace generated by these vectors. So we have for all $j \in \llbracket 1, d\rrbracket$:
\begin{align}
 u_j \| U_{N+1}^j\| \in \Z, \label{8inZ1} \\
 u_j \theta^{\lfloor\alpha^{N+1}\rfloor - \lfloor\alpha^{N}\rfloor} + \sum\limits_{ i= 1}^r v_i a_{i,j} \in \Z.\label{8inZ2}
\end{align}
Since $ \| U_{N+1}^j\| = 2 $ or $ 3 $, the relation (\ref{8inZ1}) gives $ 6u_j \in \Z $ for all $j \in \llbracket 1,d \rrbracket$. By the second relation (\ref{8inZ2}), we then have $ \sum\limits_{ i= 1}^r6 v_i a_{i,j} \in \Z $ for all $j \in \llbracket 1, d\rrbracket $. Finally:
$ \sum\limits_{i = 1 }^r 6 v_i U_i = \sum\limits_{j= 1}^d \left(\sum\limits_{i = 1 }^r 6 v_i a_{i,j} \right) X_N^{j} \in \Z^n$ and as $U_1, \ldots, U_r$ form a $\Z$-basis of $W \cap \Z^n$ we find $ 6v_i \in \Z $ for all $i \in \llbracket 1, r\rrbracket$.
So we finally have:
\begin{align*}
 w_{j,k} \in \Z& \text{ for } (j,k) \in \llbracket 1,d\rrbracket \times \llbracket N+2, N+v\rrbracket,\\
 6u_j \in \Z& \text{ for } j \in \llbracket 1,d\rrbracket,\\
 6 v_i \in \Z& \text{ for } i \in \llbracket 1,r\rrbracket. 
\end{align*}
In particular, this gives $N(I) \leq 6^{d+r} \leq 6^{2d}$.
By combining this with (\ref{8derniere_mino_prod}) in (\ref{8hauteur_somme}) we find
$ H(B_{N+1,v} \oplus W) \geq 6^{-2d}\theta^{ r \lfloor\alpha^N\rfloor +(d-r) \lfloor\alpha^{N+1}\rfloor} \geq c_{\ref{8cons_minoration_somme_espace}} \theta^{ r \alpha^N +(d-r) \alpha^{N+1}}.$
This is the expected result with $c_{\ref{8cons_minoration_somme_espace}} = 6^{-2d}\theta^{-d} $.

\end{proof}

We can now bound from below the $d$-th angle that $A$ forms with any rational subspace of dimension $e$. We first show that any good approximation of $A$ contains a subspace $B_{N+1,q_e}$ for a specific $N \in \N$.

\begin{lem}\label{8lem_inclusion_BN_C}
Let $ \varepsilon >0$ and $C$ be a rational subspace of dimension $e$ such that $ \psi_d(A,C) \leq H(C) ^{- \frac{{\alpha^{q_e +1 }}}{r_e +(d-r_e){\alpha}} - \varepsilon }.$
Assume $H(C)$ is sufficiently large, and let $N \in \N$ be such that:
\begin{align}\label{8choixN}
 \theta^{\alpha^{N+q_e}} \leq H(C) ^{ \frac{\alpha^{q_e +1 }}{r_e +(d-r_e)\alpha}
 + \frac{\varepsilon}{2} -1 } < \theta^{{\alpha^{N+q_e +1}}}.
\end{align}
Then $B_{N+1,q_e} \subset C$.
\end{lem}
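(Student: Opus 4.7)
The strategy is to use Minkowski's theorem to locate $dq_e$ linearly independent integer vectors inside $C$ that are strongly ``aligned with $A$'', and then to force each of them to be a scalar multiple of one of the $X_{N+k}^{j}$ with $k \in \llbracket 1, q_e \rrbracket$, $j \in \llbracket 1, d \rrbracket$. Once all these vectors are shown to lie in $C$, their span is exactly $B_{N+1,q_e}$ by definition~(\ref{8def_Bnv}), which gives the claim.

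\textbf{Step 1 (scales).} First I would translate the hypothesis $\psi_d(A,C) \leq H(C)^{-\frac{\alpha^{q_e+1}}{r_e+(d-r_e)\alpha}-\varepsilon}$ together with the choice of $N$ in~(\ref{8choixN}) into a usable form: both sides are comparable to a power of $\theta^{\alpha^{N+q_e}}$, so $\psi_d(A,C)$ is dominated by a small power of $\theta^{-\alpha^{N+q_e+1}}$ while $H(C)$ is roughly $\theta^{(r_e + (d-r_e)\alpha)\alpha^{N+q_e}}$ up to the slack $\varepsilon/2$.

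\textbf{Step 2 (Minkowski in $C\cap\Z^n$).} For each $k \in \llbracket 1, q_e \rrbracket$ I would apply Minkowski's theorem to the lattice $C \cap \Z^n$ (of covolume $H(C)$) and a symmetric convex body in $C$ of the form
\[
K_k = \bigl\{ x \in C \,:\, \|\pi_A(x)\| \leq R_1^{(k)},\ \|\pi_{A^\perp}(x)\| \leq R_2^{(k)} \bigr\},
\]
with $R_1^{(k)} \asymp \theta^{\alpha^{N+k}}$ and $R_2^{(k)} \asymp \psi_d(A,C)\, R_1^{(k)}$, the second choice being forced by the fact that any vector of $C$ whose $\pi_A$-component has norm at most $R_1^{(k)}$ has $\pi_{A^\perp}$-component of norm $\leq \psi_d(A,C)\, R_1^{(k)}$ along the $d$ directions that are close to $A$. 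The volume of $K_k$ in $C$ behaves like $(R_1^{(k)})^{d}(R_2^{(k)})^{e-d}$, and Step~1 will show that this volume comfortably dominates $2^e H(C)$ across the range $k = 1, \dots, q_e$, so that Minkowski's second theorem supplies $d$ linearly independent non-zero integer vectors $W_1^{(k)}, \dots, W_d^{(k)} \in C \cap \Z^n \cap K_k$.

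\textbf{Step 3 (identification with the $X_{N+k}^{j}$).} The core of the argument is to show that each such $W$ must coincide, up to an integer factor, with some $X_{N+k'}^{j}$ with $k' \leq k$. Writing $W = (w_1, \dots, w_d, w_{d+1}, \dots, w_n)^\intercal$, the constraint $\pi_{A^\perp}(W)$ small translates, using the algebraic independence of the $\sigma_{i,j}$ from Lemma~\ref{lem_cont_suit} and the truncation identities defining the $X_{N+k'}^{j}$, into the fact that for each $i$ the quantity $w_{d+1+i} - \sum_j w_j\,\sigma_{i,j}$ is too small to be realized by any combination of $(w_1,\dots,w_d)$ other than $w_j = c\,\theta^{\lfloor\alpha^{N+k'}\rfloor}$ for one index $j$ with all other $w_{j'}$ zero. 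The uniqueness provided by Lemma~\ref{8lem_unique_kj_pour_i} on which coordinate of $U^{(i,j)}_\bullet$ is non-zero then forces $W \in \Z\cdot X_{N+k'}^{j}$. This is the step I expect to be the most delicate, because it requires combining the quantitative estimate on $\pi_{A^\perp}$ coming from Minkowski with the arithmetic rigidity of the $\sigma_{i,j}$, and because one must rule out ``spurious'' integer combinations at intermediate scales $k' < k$ which would prevent us from producing $d$ independent vectors at each level $k$.

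\textbf{Step 4 (conclusion).} Running Step 3 at every level $k = 1, \dots, q_e$, pruning duplicates, and noting that vectors obtained at distinct levels are linearly independent (they have incommensurable norms), I get $dq_e$ linearly independent integer vectors of the shape $X_{N+k}^{j} \in C$ for all $(k,j) \in \llbracket 1, q_e\rrbracket \times \llbracket 1, d\rrbracket$. By~(\ref{8def_Bnv}) their span is $B_{N+1,q_e}$, so $B_{N+1,q_e} \subset C$ as required.
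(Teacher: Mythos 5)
Your plan inverts the mechanism that makes this lemma work, and two of its steps fail. First, the Minkowski step (Step 2) cannot produce the vectors you want. Taking your own volume estimate $\vol(K_k)\asymp (R_1^{(k)})^{d}(R_2^{(k)})^{e-d}$ with $R_2^{(k)}\asymp \psi_d(A,C)R_1^{(k)}\le H(C)^{-\frac{\alpha^{q_e+1}}{r_e+(d-r_e)\alpha}-\varepsilon}R_1^{(k)}$, the requirement $\vol(K_k)\ge 2^{e}H(C)$ fails at the level $k=1$ --- exactly the level needed to capture $X_{N+1}^{j}$ and hence $B_{N+1,q_e}$ --- whenever $q_e\ge 2$. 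For instance for $d=1$, $e\ge 2$, the left inequality of \eqref{8choixN} gives $H(C)^{(e-1)(\alpha^{e}+\varepsilon)}\ge \theta^{(e-1)\alpha^{N+e}}$, so $\vol(K_1)\lesssim \theta^{\alpha^{N+1}(e-(e-1)\alpha^{e-1})}\longrightarrow 0$, while $2^{e}H(C)\to\infty$; the $X_{N+1}^{j}$ do lie in $K_1$, but their existence is a special feature of the construction that a covolume count cannot detect. Moreover, even at levels where the volume happens to be large, Minkowski's theorems do not give $d$ linearly independent lattice points of $K_k$: that means $\lambda_d(K_k)\le 1$, and the second theorem only controls the product $\lambda_1\cdots\lambda_e$, so you would also need a lower bound on $\lambda_1$, i.e.\ a statement that no integer vector of $C$ is simultaneously short and very close to $A$ --- which is essentially what this whole section is in the process of proving, so the argument would be circular. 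Finally, the identification claim of Step 3 is false as stated: an integer vector of $C$ with tiny projection on $A^{\perp}$ need not be proportional to a single $X_{N+k'}^{j}$; already for the model subspaces $C=C_{N,e}$, vectors such as $X_{N+1}^{1}+X_{N+1}^{2}$ (for $d\ge 2$) or $X_{N+1}^{1}+X_{N+2}^{1}$ (for $q_e\ge 2$) are integer vectors of $C$ extremely close to $A$ that satisfy your constraints. Weakening the claim to ``every such vector lies in the span of the $X_{N+k'}^{j}$'' is essentially the content of the much harder Lemma~\ref{8lem_minoration_exposant} and cannot be invoked here.

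The paper's proof goes the other way round and uses no Minkowski at all: the candidate vectors are known explicitly, so one tests them directly. Fix a $\Z$-basis $Z_1,\dots,Z_e$ of $C\cap\Z^{n}$ and consider, for $j\in\llbracket 1,d\rrbracket$ and $k\in\llbracket 1,q_e\rrbracket$, the integer multivector $X_{N+k}^{j}\wedge Z_1\wedge\cdots\wedge Z_e$, whose norm equals $\psi_1(\Span(X_{N+k}^{j}),C)\,\|X_{N+k}^{j}\|\,H(C)$. The triangle inequality $\psi_1(\Span(X_{N+k}^{j}),C)\le \omega(X_{N+k}^{j},Y_j)+\psi_1(\Span(Y_j),C)$, combined with $\omega(X_{N+k}^{j},Y_j)\le c\,\theta^{-\alpha^{N+k+1}}$ from \eqref{8major_Yj_XNj}, $\psi_1(\Span(Y_j),C)\le\psi_d(A,C)$, the hypothesis on $\psi_d(A,C)$, the choice of $N$ in \eqref{8choixN} and inequality \eqref{8lem_inegal_alpha1}, shows this norm is $O\bigl(H(C)^{-\varepsilon/(2\alpha^{q})}\bigr)<1$ once $H(C)$ is large; being the norm of an integer vector, it vanishes, so $X_{N+k}^{j}\in C$ for all such $(j,k)$, and these vectors span $B_{N+1,q_e}$ by \eqref{8def_Bnv}. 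If you want to keep a Minkowski-based scheme, the place where it genuinely enters is the later Lemma~\ref{8lem_minoration_exposant}, not this inclusion.
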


\begin{proof}
Let $N$ be the unique integer satisfying $(\ref{8choixN})$. Let $Z_1, \ldots, Z_{e}$ be a $\Z$-basis of $C \cap \Z^n$. For $j \in \llbracket 1,d \rrbracket$, we study 
$\DD_{j,k} = \| X_{N+k}^j \wedge Z_1 \wedge \ldots \wedge Z_{e} \|$ for $k \in \llbracket 1,q_e \rrbracket$. Given a vector $ x \in \R^n $ and a subspace $ V \subset \R^n $, we denote by $ p_V(x) $ the orthogonal projection of $ x $ onto $ V $. One has $
 \DD_{j,k} = \| p_{B^\perp}(X_{N+k}^{j}) \| \| Z_1 \ldots \wedge Z_e \| = \psi_1(\Span(X_{N+k}^j),C) \|X_{N+k}^j \| H(C).$ 
We then have $\DD_{j,k} \leq c_{\ref{8cons_major_norme_XN}} \theta^{\alpha^{N+k}}H(C) \left(\omega(X_{N+k}^j, Y_{j}) + \psi_1(\Span(Y_j),C)\right)$
because $\|X_{N+k}^j\| \leq c_{\ref{8cons_major_norme_XN}} \theta^{\alpha^{N+k}} $ from $(\ref{8norme_XN})$, and $\psi_1(\Span(X_{N+k}^j),C) \leq \omega(X_{N+k}^j, Y_{j}) + \psi_1(\Span(Y_j),C)$ by the triangle inequality (see \cite[section 8]{Schmidt}). 
Now $\psi_1(Y_j,C) \leq \psi_d(A, C)$ by Lemma 2.3 of \cite{joseph_spectre} since $e \geq d$, and $\omega(X_{N+k}^j, Y_{j}) \leq c_{\ref{8cons_major_angle_Yj_XNj}} \theta^{-{\alpha^{N+k+1}}}$ from $(\ref{8major_Yj_XNj})$, thus:
\begin{align*}
\DD_{j,k} &\leq c_{\ref{8cons_maj_1Djk}}\theta^{{\alpha^{N+k}}} H(C) \left(\theta^{-{\alpha^{N+k+1}}} + H(C) ^{\frac{-\alpha^{q_e +1 }}{r_e +(d-r_e)\alpha} - \varepsilon }\right)
\end{align*}
where $\cons \label{8cons_maj_1Djk} >0$ is independent of $N$. 
Furthermore, $\theta^{{\alpha^{N+k}}} \leq \theta^{{\alpha^{N+q_e}}} \leq H(C) ^{ \frac{\alpha^{q_e +1 }}{r_e +(d-r_e)\alpha} + \frac{\varepsilon}{2} -1 } $ from the choice of $N$ in $(\ref{8choixN})$, hence $\DD_{j,k} \leq c_{\ref{8cons_maj_1Djk}} \theta^{{\alpha^{N+k}} -{\alpha^{N+k+1}}} H(C) + c_{\ref{8cons_maj_1Djk}}H(C) ^{-\frac{ \varepsilon }{2}}.$
Moreover, $H(C) ^{ \frac{\alpha^{q_e +1 }}{r_e +(d-r_e)\alpha} + \frac{\varepsilon}{2} -1} \leq \theta^{{\alpha^{N+q_e + 1}}} $ from $(\ref{8choixN})$, hence $\theta \geq H(C)^{ \frac{ \frac{\alpha^{q_e +1 }}{r_e +(d-r_e)\alpha} + \frac{\varepsilon}{2} -1 }{{\alpha^{N+q_e + 1}}} }. $ 
Thus, we obtain the upper bound: 
\begin{align}\label{8major_Djk}
\DD_{j,k} &\leq c_{\ref{8cons_maj_1Djk}} H(C)^{1+ \frac{ ({\alpha^{N+k}} -{\alpha^{N+k+1}})(\frac{{\alpha^{q_e +1 }}}{r_e +(d-r_e)\alpha} + \frac{\varepsilon}{2} -1 )}{{\alpha^{N+q_e + 1}}}} + c_{\ref{8cons_maj_1Djk}}H(C) ^{-\frac{ \varepsilon }{2}}.
\end{align}
We examine the exponent, and since $ k \geq 1 $:
\begin{align*}
1+ \frac{ ({\alpha^{N+k}} -{\alpha^{N+k+1}})(\frac{{\alpha^{q_e +1 }}}{r_e +(d-r_e)\alpha} + \frac{\varepsilon}{2} -1 )}{{\alpha^{N+q_e + 1}}} &\leq 1+ \frac{ ({\alpha^{N+1}} -{\alpha^{N+2}})(\frac{{\alpha^{q_e +1 }}}{r_e +(d-r_e)\alpha} + \frac{\varepsilon}{2} -1 )}{{\alpha^{N+q_e + 1}}} \\
&= 1+ \frac{ (1 -{\alpha})(\frac{{\alpha^{q_e +1 }}}{r_e +(d-r_e)\alpha} + \frac{\varepsilon}{2} -1 )}{{\alpha^{q_e }}}\\
&\leq \frac{\alpha^{q_e } + (1 -{\alpha})(\frac{{\alpha^{q_e }}}{d} + \frac{\varepsilon}{2} -1 )}{{\alpha^{q_e }}}
\end{align*}
because $\frac{{\alpha^{q_e +1 }}}{r_e +(d-r_e)\alpha} \geq \frac{{\alpha^{q_e }}}{d} \geq 1 $ and $1-\alpha <-1$. Moreover,
\begin{align*}
\alpha^{q_e } + (1 -{\alpha})(\frac{{\alpha^{q_e }}}{d} -1 ) \leq \frac{1}{d}\left(-\alpha^{q_e+1} + \alpha^{q_e}(d+1) + d(\alpha - 1) \right) \leq \frac{1}{d}\left(-\alpha^{2} + \alpha(2d+2) - d \right) 
\end{align*}
since $e \geq d$, hence $q_e \geq 1$. By inequality $(\ref{8lem_inegal_alpha1})$, we have $-\alpha^{2} + \alpha(2d+2) - d \leq 0$. Then, $\frac{\alpha^{q_e } + (1 -{\alpha})(\frac{{\alpha^{q_e }}}{d} + \frac{\varepsilon}{2} -1 )}{{\alpha^{q_e }}} \leq \frac{(1-\alpha)\varepsilon}{2 \alpha^{q_e}} \leq \frac{-\varepsilon}{2 \alpha^{q}}$ because $1-\alpha <-1$ and $q_e \leq q$.
Thus, by revisiting (\ref{8major_Djk}): 
\begin{align*}
\DD_{j,k} &\leq c_{\ref{8cons_maj_1Djk}}H(C)^{- \frac{\varepsilon}{2\alpha^q}} + c_{\ref{8cons_maj_1Djk}}H(C) ^{-\frac{ \varepsilon }{2}} \leq 2c_{\ref{8cons_maj_1Djk}}H(C)^{- \frac{\varepsilon}{2\alpha^q}} .
\end{align*}
For $H(C)$ sufficiently large depending on $\varepsilon, \alpha, n $, and $c_{\ref{8cons_maj_1Djk}} $, we have $ \DD_{j,k} < 1$ for all $j \in \llbracket 1,d \rrbracket $ and $k \in \llbracket 1,q_e \rrbracket$. It implies that for such $(j,k)$ the infinite norm $ \| X_{N+k}^j \wedge Z_1 \wedge \ldots \wedge Z_{e} \|_{\infty} $ vanishes since it is an integer bounded from above by $\DD_{j,k} $. Thus, we have a linear relation between the vectors $X_{N+k}^j, Z_1, \ldots, Z_{e}$ that gives $\forall j \in \llbracket 1,d \rrbracket, \quad \forall k \in \llbracket 1,q_e \rrbracket, \quad X_{N+k}^j \in C.$
These vectors generate $B_{N+1,q_e}$, hence $B_{N+1,q_e } \subset C$. 

\end{proof}

\begin{lem}\label{8lem_minoration_exposant}
 Let $ \varepsilon > 0$. For all but a finite number of rational subspaces $C$ of dimension $e$, we have $$\psi_d(A,C) > H(C)^{- \frac{{\alpha^{q_e +1 }}}{r_e +(d-r_e){\alpha}} - \varepsilon }.$$

\end{lem}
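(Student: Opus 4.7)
I would argue by contradiction. Suppose there exist infinitely many rational subspaces $C$ of dimension $e$ satisfying $\psi_d(A,C) \leq H(C)^{-\mu}$, where $\mu := \frac{\alpha^{q_e+1}}{r_e+(d-r_e)\alpha} + \varepsilon$. For each such $C$ with $H(C)$ large enough, Lemma~\ref{8lem_inclusion_BN_C} yields an integer $N = N(C)$, tending to $+\infty$ along the offending sequence, satisfying \eqref{8choixN} and $B_{N+1,q_e} \subset C$. The plan is then to couple two independent lower bounds on $H(C)$ and on $\psi_d(A,C)$ respectively, and show they are incompatible with the hypothesis for $N$ large.

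For the lower bound on $H(C)$, I would set $W := C \cap \Span(X_N^1,\ldots,X_N^d)$; this intersection is automatically rational. I would then argue that $C \subset B_{N,q_e+1}$, whence the dimension count
\[
\dim W = \dim C + d - \dim\bigl(C+\Span(X_N^1,\ldots,X_N^d)\bigr) = e + d - (q_e+1)d = r_e
\]
forces $C = B_{N+1,q_e} \oplus W$. Lemma~\ref{8minoration_somme_espace} then gives directly
\[
H(C) \geq c_1\, \theta^{r_e\alpha^N + (d-r_e)\alpha^{N+1}}.
\]

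For the lower bound on $\psi_d(A,C)$, I would adapt the proof of Lemma~\ref{8lem_minoration_Y1angleCNE}. Fixing a $\Z$-basis $U_1,\ldots,U_{r_e}$ of $W \cap \Z^n$ — which, since $X_N^1,\ldots,X_N^d$ form a $\Z$-basis of $\Span(X_N^1,\ldots,X_N^d)\cap\Z^n$, consists of integer combinations of the $X_N^j$ — an arbitrary $X \in C$ expands in the $\Z$-basis $(X_N^j)_j \cup (V_k^j)_{j,k} \cup (U_i)_i$ of $C\cap \Z^n$. The same three-case analysis on the dominant coefficient of this expansion, bounding $\|Y_1 \wedge X\|$ from below by a carefully chosen $2\times 2$ minor of $(Y_1 \mid X)$, should then produce $\psi_1(\Span(Y_1),C) \geq c_2\, \theta^{-\alpha^{N+q_e+1}}$; combining with Lemma~2.3 of \cite{joseph_spectre} gives
\[
\psi_d(A,C) \geq \psi_1(\Span(Y_1),C) \geq c_2\, \theta^{-\alpha^{N+q_e+1}}.
\]

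Chaining the three inequalities,
\[
c_2\, \theta^{-\alpha^{N+q_e+1}} \leq \psi_d(A,C) \leq H(C)^{-\mu} \leq c_1^{-\mu}\, \theta^{-\mu(r_e\alpha^N + (d-r_e)\alpha^{N+1})},
\]
and comparing the leading-order $\theta$-exponents as $N \to +\infty$ yields $\alpha^{q_e+1} \geq \mu\bigl(r_e+(d-r_e)\alpha\bigr)$, i.e.\ $\mu \leq \frac{\alpha^{q_e+1}}{r_e+(d-r_e)\alpha}$, contradicting the definition of $\mu$. Hence only finitely many such $C$ can exist. The main obstacle is clearly the structural inclusion $C \subset B_{N,q_e+1}$: a rational vector of $C$ lying outside $B_{N,q_e+1}$ has to be shown to conflict either with the upper bound on $H(C)$ from \eqref{8choixN} or with the way the $V_k^j$ populate the coordinates defining $B_{N,q_e+1}$, and this is precisely where Minkowski's theorem on convex bodies applied to the lattice $C \cap \Z^n$ is expected to enter, in accordance with the role of geometry of numbers advertised in the introduction.
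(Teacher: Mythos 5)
Your skeleton matches the paper's: argue by contradiction, invoke Lemma~\ref{8lem_inclusion_BN_C} to get $B_{N+1,q_e}\subset C$ with $N$ as in \eqref{8choixN}, decompose $C = B_{N+1,q_e}\oplus W$ with $W\subset\Span(X_N^1,\ldots,X_N^d)$ of dimension $r_e$, bound $H(C)$ below by Lemma~\ref{8minoration_somme_espace}, bound $\psi_d(A,C)$ below via $\psi_1(\Span(Y_1),\cdot)$, and chain. But the step you defer --- the ``structural inclusion $C\subset B_{N,q_e+1}$'', equivalently that $C\cap\Span(X_N^1,\ldots,X_N^d)$ has dimension $r_e$ --- is precisely the heart of the lemma, and your proposal does not prove it; it only gestures at ``Minkowski applied to the lattice $C\cap\Z^n$''. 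That form of Minkowski is not enough: a shortest vector of $C\cap\Z^n$ has norm of order $H(C)^{1/e}$, which is far too large to force it into $\Span(X_N^1,\ldots,X_N^d)$ or into $B_{N,q_e+1}$. The paper's argument is an induction constructing integer vectors $U_1,\ldots,U_{r_e}\in C\cap\Span(X_N^1,\ldots,X_N^d)$ one at a time: at step $r$ one applies Minkowski not to $C\cap\Z^n$ but to its orthogonal projection $\Delta_r=\pi_r(C\cap\Z^n)$ onto $D_r=(B_{N+1,q_e}\oplus\Span(U_1,\ldots,U_r))^{\perp}\cap C$, a lattice of determinant $H(C)/H(G_r)$; the lower bound $H(G_r)\geq c\,\theta^{r\alpha^N+(d-r)\alpha^{N+1}}$ from Lemma~\ref{8minoration_somme_espace} (used at every induction step, not only at the end as in your plan) makes the resulting vector $X_r$ short enough that the integer exterior product $E_r = X_r\wedge\bigwedge_{k=0}^{q_e}(X_{N+k}^1\wedge\ldots\wedge X_{N+k}^d)$ has norm $<1$, hence vanishes, which yields $U_{r+1}$; this step consumes the choice of $N$ in \eqref{8choixN} and the inequalities \eqref{8lem_inegal_alpha2}, \eqref{8lem_inegal_alpha4}, \eqref{8lem_inegal_alpha5} defining $C_d$. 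Without this (or an equivalent) argument your proof has a genuine gap, and there is no obvious shortcut: nothing in \eqref{8choixN} alone prevents a rational $C\supset B_{N+1,q_e}$ of the allowed height from containing vectors outside $B_{N,q_e+1}$.

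Two smaller points. First, your uniform treatment misses the edge case $r_e=0$ with $q_e=q$ (i.e.\ $e=qd$), where Lemma~\ref{8minoration_somme_espace} does not apply (it requires $v\leq q-1$); there $C=B_{N+1,q}=C_{N,e}$ and one should instead quote Lemma~\ref{8lem_hauteur_CN} (or Lemma~\ref{8lem_psid_ACNe}), as the paper does. Second, for the angle lower bound you propose to rerun the three-case analysis of Lemma~\ref{8lem_minoration_Y1angleCNE} in a ``$\Z$-basis $(X_N^j)\cup(V_k^j)\cup(U_i)$ of $C\cap\Z^n$'': that family is not a basis of $C$ (the $X_N^j$ for $j>r_e$ need not lie in $C$), and the concatenation that does span $C$ is not known to be a $\Z$-basis of $C\cap\Z^n$ (compare the care taken with $N(I)$ in the proof of Lemma~\ref{8minoration_somme_espace}); the clean route, which the paper takes, is monotonicity: $C\subset B_{N,q_e+1}=C_{N-1,(q_e+1)d}$, so $\psi_d(A,C)\geq\psi_1(\Span(Y_1),C_{N-1,(q_e+1)d})\geq c\,\theta^{-\alpha^{N+q_e+1}}$ directly from Lemma~\ref{8lem_minoration_Y1angleCNE} applied with shifted indices. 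Your final comparison of exponents is then fine.
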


\begin{proof}
Assume the contrary. Then, for some $\varepsilon > 0$, there exist infinitely many rational subspaces $C$ of dimension $e$ satisfying:
\begin{align}\label{8hyp_absurde_proof_lem14}
\psi_d(A,C) \leq H(C)^{\frac{-\alpha^{q_e +1 }}{r_e +(d-r_e){\alpha}} - \varepsilon}.
\end{align}
According to Lemma~\ref{8lem_inclusion_BN_C}, if such a $C$ has sufficiently large height, which we can assume this there infinitely many such $C$, then $B_{N+1, q_e} \subset C$ with $N$ satisfying (\ref{8choixN}). We divide the proof into two cases depending on the value of $r_e$.

\textbullet \: \underline{First case $r_e = 0$:} In this case, we have $e = q_e d$ and $\frac{{\alpha^{q_e +1 }}}{r_e +(d-r_e){\alpha}} = \frac{\alpha^{q_e}}{d}$. We already have $B_{N+1,q_e} \subset C$; by equality of dimensions and since $r_e = 0$, we have $C = B_{N+1,q_e} = C_{N,e}$ by Remark~\ref{8req_CN=BN}.
Thus, $H(C) = H(C_{N,e})$ and by Lemma~\ref{8lem_psid_ACNe} we have 
$c_{\ref{8cons_minor_angle_psid}}H(C)^{\frac{-{\alpha^{q_e+1}}}{r_e+(d-r_e){\alpha}} }\leq \psi_d(A,C_{N,e}) = \psi_d(A,C) \leq H(C) ^{-\frac{{\alpha^{q_e +1 }}}{r_e +(d-r_e){\alpha}} - \varepsilon }.$
This implies $\frac{{\alpha^{q_e+1}}}{r_e +(d-r_e){\alpha}} \geq \frac{{\alpha^{q_e +1 }}}{r_e +(d-r_e){\alpha}} + \frac{\varepsilon }{2}$ since we can make $H(C)$ tend to $+\infty$. Thus, we obtain $\frac{\varepsilon}{2} \leq 0$, which is a contradiction.

\bigskip

\textbullet \: \underline{Second case $r_e \neq 0$:} In this case, $e = q_e d + r_e$ and in particular $1 \leq q_e \leq q-1$. Let us prove that $\Span(X_N^1, \ldots, X_N^d) \cap C $ has dimension greater than or equal to $r_e$ by showing by induction on $r \in \llbracket 0,r_e-1 \rrbracket$ that there exist $r+1$ linearly independent integer vectors in $\Span(X_N^1, \ldots, X_N^d) \cap C $.
Let $r \in \llbracket 0, r_e -1 \rrbracket$. Suppose there exist $U_1, \ldots, U_r$ linearly independent integer vectors in $\Span(X_N^1, \ldots, X_N^d) \cap C $; if $r = 0$, this assumption is vacuous and thus true. Let us show that there exists $U_{r+1} \in \Span(X_N^1, \ldots, X_N^d) \cap C \cap \Z^n $ such that $U_1, \ldots, U_{r+1}$ are linearly independent.

As $q_e \leq q-1$ and the family of vectors involved in the definition of $B_{N+1, q_e}$ in (\ref{8def_Bnv}) is free, we have $B_{N+1,q_e} \cap \Span(X_N^1, \ldots, X_N^d) = \{0\}$, and therefore, according to Lemma~\ref{8lem_dimBN_zbaseBN},
$\dim(B_{N+1,q_e} \oplus \Span(U_1, \ldots, U_r)) = dq_e +r.$
We denote $G_r =B_{N+1,q_e} \oplus \Span(U_1, \ldots, U_r) $ and $D_r = G_r^\perp \cap C$ the orthogonal complement of $G_r$ in $C$.
We have $\dim(D_r) = e - dq_e -r = r_e - r \geq 1$. 
Let $\pi_r : C \to D_r $ be the orthogonal projection onto $D_r$.
We define $\Delta_r = \pi_r(C \cap \Z^n)$. Then $\Delta_r$ is a Euclidean lattice of $D_r$ with determinant 
$ d(\Delta_r) = \frac{H(C)}{H(G_r)}$ ;
this result can be found in the proof of Theorem~2 of \cite{Schmidt}. According to Minkowski's theorem (see \cite[Lemma $4B$]{Schmidt_book}), there exists $X_r' \in \Delta_r \setminus \{0 \} \subset D_r \cap \Q^n$ such that:
\begin{align}\label{8normeXr}
 \| X_r' \| \leq c_{\ref{8cons_minko_1}} d(\Delta_r)^{\frac{1}{\dim(D_r)}} \leq c_{\ref{8cons_minko_1}} \left(\frac{H(C)}{H(G_r)}\right)^{\frac{1}{r_e-r}}
\end{align}
with $\cons \label{8cons_minko_1} >0$ a constant depending only on $e$. Since $X_r' \in \Delta_r$, there exists $X_r \in C \cap \Z^n$ such that $\pi_r(X_r) = X_r'$; we have $X_r \notin G_r$ so that $X_r \notin B_{N+1,q_e}.$
We define $E_r$ as the exterior product of the vectors $(X_{N+k}^j)_{j \in \llbracket 1,d\rrbracket,k \in \llbracket 0, q_e \rrbracket} $ and $X_r$. We aim to show that $E_r= 0$ and for this, we examine its norm. 
We have:
\begin{align*}
 \| E_r \| &= \|X_r \wedge \bigwedge\limits_{k=0}^{q_e} (X_{N+k}^1 \wedge \ldots \wedge X_{N+k}^d) \| = \|\pi_r(X_r) \wedge \bigwedge\limits_{k=0}^{q_e} (X_{N+k}^1 \wedge \ldots \wedge X_{N+k}^d) \|
\end{align*}
since $X_r - \pi_r(X_r) \in G_r \subset B_{N,q_e+1}$.
Now $\| \pi_r(X_r) \| = \|X_r' \| \leq c_{\ref{8cons_minko_1}} \left(\frac{H(C)}{H(G_r)}\right)^{\frac{1}{r-r_e }}$ according to (\ref{8normeXr}) and thus $\| E_r\| $ is bounded from above by 
\begin{align*}
 \|\pi_r(X_r) \| \cdot \| \bigwedge\limits_{k=0}^{q_e} (X_{N+k}^1 \wedge \ldots \wedge X_{N+k}^d) \| \leq c_{\ref{8cons_minko_1}} \left(\frac{H(C)}{H(G_r)}\right)^{\frac{1}{r-r_e}} \| (X_{N}^1 \wedge \ldots \wedge X_{N}^d) \wedge \bigwedge\limits_{k=1}^{q_e} (U_{N+k}^1 \wedge \ldots \wedge U_{N+k}^d) \|
\end{align*}
using the formula (\ref{8rec_Xn_Un}) on the $X_N^j$ with $U_{N+k}^j = \begin{pmatrix}
 0 &
 \cdots &
 0 &
 u_{N+k}^{(0,j)}& 
 \cdots & u_{N+k}^{(qd-1,j)}
 \end{pmatrix}^\intercal$. According to the construction of the $u_k^{(i,j)}$ in (\ref{8construc_suite_u}) we have $\| U_k^j \| \leq 3 $ for all $k $ and $j$. This yields:
\begin{align*}
 \| E_r \| &\leq c_{\ref{8cons_minko_1}} \left(\frac{H(C)}{H(G_r)}\right)^{\frac{1}{r_e-r}} \| X_{N}^1 \wedge \ldots \wedge X_{N}^d \| \prod\limits_{k=1}^{q_e} (\|U_{N+k}^1 \| \cdots \| U_{N+k}^d \|) \\&\leq 3^{dq}c_{\ref{8cons_minko_1}} \left(\frac{H(C)}{H(G_r)}\right)^{\frac{1}{r-r_e}} \| X_{N}^1 \wedge \ldots \wedge X_{N}^d \|.
\end{align*}
Now $\Span(X_{N}^1, \ldots, X_{N}^d) = B_{N,1}$ according to the construction in (\ref{8def_Bnv}). According to Lemma~\ref{8lem_dimBN_zbaseBN} $X_{N}^1, \ldots, X_{N}^d$ form a $\Z$-basis of $B_{N,1} \cap \Z^n$ and thus $H(B_{N,1}) = \| X_{N}^1 \wedge \ldots \wedge X_{N}^d \|$. Now since $B_{N,v} = C_{N-1,dv}$ for $v \in \llbracket1, q \rrbracket$, Lemma \ref{8lem_hauteur_CN} gives $H(B_{N,1}) \leq c_{\ref{8cons_haut_CN_major}} \theta^{d{\alpha^N}}$ and thus $\| X_{N}^1 \wedge \ldots \wedge X_{N}^d \| \leq c_{\ref{8cons_haut_CN_major}} \theta^{d{\alpha^N}}$. On the other hand, Lemma~\ref{8minoration_somme_espace} yields $H(G_r) = H(B_{N+1,q_e} \oplus \Span(U_1, \ldots, U_r)) \geq
c_{\ref{8cons_minoration_somme_espace}} \theta^{ r {\alpha^N} +(d-r) {\alpha^{N+1}}} $. Thus we have:
 \begin{align*}
 \| E_r \| \leq 3^{dq}c_{\ref{8cons_minko_1}}c_{\ref{8cons_minoration_somme_espace}}^{\frac{-1}{r_e-r}} c_{\ref{8cons_haut_CN_major}}\theta^{ d{\alpha^N} - \frac{r {\alpha^N} +(d-r) {\alpha^{N+1}}}{r_e-r } } H(C)^{\frac{1}{r_e-r} } = 3^{dq}c_{\ref{8cons_minko_1}}c_{\ref{8cons_minoration_somme_espace}}^{\frac{-1}{r_e-r}} c_{\ref{8cons_haut_CN_major}}\theta^{ \frac{\alpha^N (d(r_e-r) - r - (d-r) {\alpha} )} {r_e-r} } H(C)^{\frac{1}{r_e-r} }.
 \end{align*}
 Now $ d(r_e-r) - r - (d-r) {\alpha} \leq dr_e -(d-r_e) \alpha \leq 0 $ according to the inequality $(\ref{8lem_inegal_alpha4})$, and the choice of $N$ in (\ref{8choixN}) gives $H(C)^{ \frac{{\alpha^{q_e }} }{\alpha^{N+q_e +1 }(d -r_e + \frac{1}{2})} } \leq H(C)^{ \frac{ \frac{\alpha^{q_e +1 }}{r_e +(d-r_e)\alpha } + \frac{\varepsilon}{2} -1 }{\alpha^{N+q_e +1 }} } \leq \theta$
 because $\frac{\alpha^{q_e }}{d -r_e + \frac{1}{2} } \leq \frac{\alpha^{q_e +1 }}{r_e +(d-r_e)\alpha } -1 + \frac{\varepsilon}{2} $ according to the inequality $(\ref{8lem_inegal_alpha5})$. 
 The previous inequality then becomes:
 \begin{align*}
 \| E_r \| \leq 3^{dq}c_{\ref{8cons_minko_1}}c_{\ref{8cons_minoration_somme_espace}}^{\frac{-1}{r_e-r}} c_{\ref{8cons_haut_CN_major}} H(C)^{ \frac{\alpha^N\alpha^{q_e } (d(r_e-r) - r - (d-r) {\alpha} )}{\alpha^{N+q_e +1} (r_e-r)(d -r_e + \frac{1}{2})} + \frac{1}{r_e-r} } = 3^{dq}c_{\ref{8cons_minko_1}}c_{\ref{8cons_minoration_somme_espace}}^{\frac{-1}{r_e-r}} c_{\ref{8cons_haut_CN_major}} H(C)^{ \frac{ (d(r_e-r) - r - (d-r) {\alpha} ) + \alpha(d -r_e + \frac{1}{2}) }{\alpha(r_e-r)(d -r_e + \frac{1}{2})} } .
 \end{align*}
 We study the exponent denoted by $\delta =\frac{ (d(r_e-r) - r - (d-r) {\alpha} ) + \alpha(d -r_e + \frac{1}{2}) }{\alpha(r_e-r)(d -r_e + \frac{1}{2})} $, and we can then bound it from above as follows:
\begin{align*}
\delta = \frac{ 1}{\alpha(r_e-r)(d -r_e + \frac{1}{2})} (- \alpha(r_e-r - \frac{1}{2}) + d(r_e-r) - r ) \leq \frac{ 1}{\alpha(r_e-r)(d -r_e + \frac{1}{2})} (- \frac{\alpha}{2} + d(d-1) )
\end{align*}
because $0 \leq r \leq r_e -1$ and $1 \leq r_e -r \leq d-1 $. Finally, according to inequality $(\ref{8lem_inegal_alpha2})$, we have $- \frac{\alpha}{2} + d(d-1) \leq -1$, thus
$\delta \leq \frac{ -1}{\alpha(r_e-r)(d -r_e + \frac{1}{2})} $
and $\| E_r \| \leq 3^{dq}c_{\ref{8cons_minko_1}}c_{\ref{8cons_minoration_somme_espace}}^{\frac{-1}{r_e-r}} c_{\ref{8cons_haut_CN_major}} H(C)^{ \frac{ -1}{\alpha(r_e-r)(d -r_e + \frac{1}{2}) }}$. In particular, if $H(C)$ is large enough, we have $\| E_r \| < 1$. 
Now, since the vectors considered in the exterior product $E_r = X_r \wedge \bigwedge\limits_{k=0}^{q_e} (X_{N+k}^1 \wedge \ldots \wedge X_{N+k}^d) $ belong to $\Z^n$, this exterior product vanishes. 

\bigskip

This implies the existence of $U_{r+1} \in (B_{N+1,q_e} \oplus \Span(X_r)) \cap \Span(X_N^1, \ldots, X_N^d) \setminus \{ 0\} $. Recall that $X_r \notin B_{N+1,q_e}$ and since the subspaces under consideration are rational, we can take $U_{r+1} \in \Z^n$. As $(B_{N+1,q_e} \oplus \Span(X_r)) \subset C$, we also have $U_{r+1} \in C$.

Let us now show that the vectors $U_1, \ldots, U_{r+1}$ are linearly independent over $\R$. Suppose $ \sum\limits_{k =1}^{r+1} \lambda_k U_k =0 $ is a linear dependency relation. 
We apply $\pi_r$, the orthogonal projection onto $D_r = (B_{N+1,q_e} \oplus \Span(U_1, \ldots, U_r))^\perp \cap C $, yielding:
\begin{align}\label{8indep}
 0= \pi_r\left(\sum\limits_{k =1}^{r+1} \lambda_k U_k\right) &= \sum\limits_{k =1}^{r+1} \lambda_k \pi_r(U_k) = \lambda_{r+1} \pi_r(U_{r+1}).
 \end{align}
Now, $U_{r+1} = Z + \mu X_r$ with $Z \in B_{N+1,q_e} $ and $\mu \in \R$. Also, since $U_{r+1} \in \Span(X_N^1, \ldots, X_N^d) $ and $\Span(X_N^1, \ldots, X_N^d) \cap B_{N+1,q_e} = \{0\}$, we have $\mu \neq 0$. Hence $\pi_r(U_{r+1}) = \mu X_r' \neq 0$, implying $\lambda_{r+1} = 0 $ using (\ref{8indep}).
Finally, we find $\lambda_1 = \ldots = \lambda_r = 0$ using the induction hypothesis that $U_1, \ldots, U_r$ are linearly independent, which concludes the induction.

\bigskip

Thus, we have shown that there exists a rational subspace $W \subset \Span(X_N^1, \ldots, X_N^d) \cap C$ of dimension $r_e$. Since $B_{N+1,q_e} \subset C$ and $\Span(X_N^1, \ldots, X_N^d) \cap B_{N+1,q_e} = \{0\}$, we have $C = B_{N+1,q_e} \oplus W $ by equality of dimensions. Lemma~\ref{8minoration_somme_espace} then gives:
\begin{align}\label{8minor_hautC_rappel}
 H(C) \geq c_{\ref{8cons_minoration_somme_espace}} \theta^{ r_e {\alpha^N} +(d-r_e) {\alpha^{N+1}}}.
\end{align}

Furthermore, $C = B_{N+1,q_e} \oplus W \subset C_{N-1, (q_e+1)d}$, and applying Lemma~\ref{8lem_minoration_Y1angleCNE} with $N' = N-1$ and $e' = q_{e'}d$ where $q_{e'} = q_e +1$, we get:
\begin{align}\label{8minor_rappel_y1_C}
 \psi_1(\Span(Y_1), C_{N-1,(q_e+1)d}) \geq c_{\ref{8cons_minor_angle_y1_CNe}} \theta^{- \alpha^{N'+ q_{e'}}} =
 c_{\ref{8cons_minor_angle_y1_CNe}} \theta^{- \alpha^{N+q_e+1}}
\end{align}

Now, using Lemma 2.3 of \cite{joseph_spectre}, since $Y_1 \in A \setminus \{0\} $ and $\dim(A) =d $, we have:
\begin{align}\label{8minor_angle_C}
 \psi_d(A,C) \geq \psi_d(A,C_{N-1,(q_e+1)d}) \geq \psi_1(\Span(Y_1),C_{N-1,(q_e+1)d}).
\end{align}

Combining inequalities $(\ref{8minor_hautC_rappel})$, $(\ref{8minor_rappel_y1_C})$, and $(\ref{8minor_angle_C})$, we get the existence of a constant $\cons \label{8cons_proof_infinit_derniere_cons} >0 $ independent of $C$, such that $ \psi_d(A,C) \geq c_{\ref{8cons_proof_infinit_derniere_cons}} H(C) ^{ \frac{-\alpha^{N+q_e+1}}{r_e {\alpha^N} +(d-r_e) {\alpha^{N+1}}}} = c_{\ref{8cons_proof_infinit_derniere_cons}} H(C) ^{ \frac{-\alpha^{q_e+1}}{r_e +(d-r_e) {\alpha}}}.$
Recalling the assumption made in $(\ref{8hyp_absurde_proof_lem14})$, we have $c_{\ref{8cons_proof_infinit_derniere_cons}} H(C) ^{ \frac{-\alpha^{q_e+1}}{r_e +(d-r_e) {\alpha}}} \leq H(C) ^{ \frac{-\alpha^{q_e+1}}{r_e +(d-r_e) {\alpha}} - \varepsilon}.$
In particular, for all $C$ with sufficiently large $H(C)$, we have $ c_{\ref{8cons_proof_infinit_derniere_cons}} \leq H(C)^{-\varepsilon }$, and thus $ c_{\ref{8cons_proof_infinit_derniere_cons}} = 0$. This is contradictory and concludes the proof of the lemma.

\end{proof}

Lemma~\ref{8lem_minoration_exposant} yields $\mu_n(A|C)_d \leq \frac{\alpha^{q_e +1 }}{r_e +(d-r_e){\alpha}}.$
Since the other inequality has been proven in section \ref{sect_4otherequality}, this completes the proof of theorem~\ref{8theo_dernier_angle} in the case $d \leq e$.

\section{Computation of the exponent in the case \texorpdfstring{$e < d$}{}}
In this section, we show that $\mu_n(A|e)_e = \frac{\alpha}{e}$ for $e \in \llbracket 1,d -1 \rrbracket $. It is worth noting that in the case where $e = d $, this equality also holds; indeed, we proved Theorem~\ref{8theo_dernier_angle} in this case. Since this case also appears in the proofs of subsequent sections, we will reprove that $\mu_n(A|d)_d = \frac{\alpha}{d}$. Throughout the following, we fix $e \in \llbracket 1, d \rrbracket$.

\subsection{Lower bound on the exponent}
The lower bound on the exponent $\mu_n(A|e)_e$ follows the same ideas as in the case $e \geq d$. Here, we introduce a sequence of subspaces that approximate $A$ very well. For $N \in \N$, we define:
\begin{align}\label{def_DN}
 D_{N,e} = \Span(X_N^1, \ldots, X_N^e)
\end{align}
which is a rational subspace of dimension $e$ since $X_N^i \in \Z^n$ for all $i$.

\begin{lem}\label{8lem_hauteur_DN} The vectors $X_N^1, \ldots, X_N^{e}$ form a $\Z$-basis of $D_{N,e} \cap \Z^n$ and 
\begin{align*}
 c_{\ref{8cons_minor_haut_DN}} \theta^{e {\alpha^N}} \leq H(D_{N,e}) \leq c_{\ref{8cons_major_haut_DN}} \theta^{e {\alpha^N}}
\end{align*}
\end{lem}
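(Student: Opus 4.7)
The plan is to split the lemma into two claims: first that the $X_N^j$ form a $\Z$-basis of $D_{N,e}\cap\Z^n$, and second that $H(D_{N,e})$ satisfies the claimed two-sided bound. For both parts I would reuse the setup already developed in Lemma~\ref{8lem_dimBN_zbaseBN} and Lemma~\ref{8lem_hauteur_CN}; the situation here is in fact simpler because $D_{N,e}$ is a sub-family of the spanning set of $B_{N,1}$.

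For the basis claim, note that $D_{N,e}=\Span(X_N^1,\ldots,X_N^e)\subset B_{N,1}$, and by Lemma~\ref{8lem_dimBN_zbaseBN} (applied with $v=1$) the family $(X_N^j)_{j\in\llbracket 1,d\rrbracket}$ is a $\Z$-basis of $B_{N,1}\cap\Z^n$. The subfamily $(X_N^j)_{j\in\llbracket 1,e\rrbracket}$ is therefore free, and by the same argument used at the end of the proof of Lemma~\ref{8lem_dimBN_zbaseBN} (a subset of a $\Z$-basis is a $\Z$-basis of the $\Z$-module it generates), it forms a $\Z$-basis of $D_{N,e}\cap\Z^n$. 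In particular $\dim(D_{N,e})=e$ and $H(D_{N,e})=\|X_N^1\wedge\cdots\wedge X_N^e\|$.

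For the height bounds, I would use the scaling $X_N^j = \theta^{\lfloor\alpha^N\rfloor} Z_N^j$ from \eqref{8vecteur_Zn}, which gives
\[
H(D_{N,e}) \;=\; \theta^{e\lfloor\alpha^N\rfloor}\,\|Z_N^1\wedge\cdots\wedge Z_N^e\|.
\]
Since $Z_N^j\to Y_j$ as $N\to\infty$ and $Y_1,\ldots,Y_e$ are linearly independent (they are part of the basis of $A$), the quantity $\|Z_N^1\wedge\cdots\wedge Z_N^e\|$ converges to $\|Y_1\wedge\cdots\wedge Y_e\|>0$ and is therefore bounded above by some constant $c_{\ref{8cons_major_haut_DN}}$ independent of $N$. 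This yields the upper bound $H(D_{N,e})\le c_{\ref{8cons_major_haut_DN}}\theta^{e\alpha^N}$.

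For the lower bound, rather than relying on convergence (which only gives a lower bound eventually), I would mimic the minor argument of Lemma~\ref{8lem_hauteur_CN}: the matrix whose columns are $X_N^1,\ldots,X_N^e$ has, in its first $d$ rows, the block $\theta^{\lfloor\alpha^N\rfloor}\bigl(\begin{smallmatrix}I_e\\0\end{smallmatrix}\bigr)$. Selecting the minor indexed by the first $e$ rows gives an integer equal to $\theta^{e\lfloor\alpha^N\rfloor}$, so $\|X_N^1\wedge\cdots\wedge X_N^e\|\ge \theta^{e\lfloor\alpha^N\rfloor}\ge \theta^{-e}\theta^{e\alpha^N}$. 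Setting $c_{\ref{8cons_minor_haut_DN}}=\theta^{-e}$ completes the proof. There is no substantive obstacle: the whole argument is a direct transposition of the methods already set up in the previous two lemmas, and no new geometry-of-numbers input is required here.
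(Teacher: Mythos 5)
Your proof is correct, and its structure (reduce to Lemma~\ref{8lem_dimBN_zbaseBN}, identify $H(D_{N,e})=\|X_N^1\wedge\cdots\wedge X_N^e\|$, factor out $\theta^{e\lfloor\alpha^N\rfloor}$ via the $Z_N^j$) is the same as the paper's. The only divergence is the lower bound: the paper bounds $\|Z_N^1\wedge\cdots\wedge Z_N^e\|$ below and above simultaneously by the single observation that it converges to $\|Y_1\wedge\cdots\wedge Y_e\|\neq 0$, whereas you transpose the integer-minor trick from Lemma~\ref{8lem_hauteur_CN}, extracting the minor on the first $e$ rows, which equals $\theta^{e\lfloor\alpha^N\rfloor}$ and gives the explicit constant $c=\theta^{-e}$. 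Your stated reason for avoiding the convergence argument (``it only gives a lower bound eventually'') is not quite accurate: each term $\|Z_N^1\wedge\cdots\wedge Z_N^e\|$ is nonzero because the $X_N^j$ are linearly independent, so convergence to a nonzero limit does yield a positive lower bound valid for all $N$, which is exactly how the paper argues. Still, your minor computation is a legitimate alternative and even slightly stronger, since it produces an effective constant rather than one depending on the limit $\|Y_1\wedge\cdots\wedge Y_e\|$; either route proves the lemma.
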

with $\cons \label{8cons_minor_haut_DN} >0$ and $\cons \label{8cons_major_haut_DN} >0$ independent of $N$.

\begin{proof}
By Lemma~\ref{8lem_dimBN_zbaseBN}, the vectors $X_N^1, \ldots, X_N^e $ are vectors of a $\Z$-basis of $B_{N,v} \cap \Z^n$ introduced in (\ref{8def_Bnv}). They thus form in particular a $\Z$-basis of $W \cap \Z^n$ where $W$ is the subspace they generate, namely $D_{N,e}$. By definition of the height, we have $H(D_{N,e}) = \| X_N^1 \wedge \ldots \wedge X_N^e\| $.
\\ We recall the notation $Z_{N}^j = \frac{1}{\theta^{\lfloor \alpha^N \rfloor}} X_N^j $ for $i \in \llbracket 1,e \rrbracket$ and we have $Z_N^j \underset{N\to+\infty}{\longrightarrow} Y_j$. In particular, we have $ \| Z_N^1 \wedge \ldots \wedge Z_N^e\| \underset{N\to+\infty} {\longrightarrow} \| Y_1 \wedge \ldots \wedge Y_e \| \neq 0.$ This implies 
\begin{align*}
 \theta^{-e\lfloor \alpha^N \rfloor} \| X_N^1 \wedge \ldots \wedge X_N^e\| = \| Z_N^1 \wedge \ldots \wedge Z_N^e\| \underset{N\to+\infty} {\longrightarrow} \| Y_1 \wedge \ldots \wedge Y_e \| \neq 0.
\end{align*}
Thus, there exist $\cons \label{8cons_proof_haut_DN1}>0$ and $\cons \label{8cons_proof_haut_DN2}>0$ such that for all $N \in \N $,
$ c_{\ref{8cons_proof_haut_DN1}} \theta^{e \lfloor \alpha^N \rfloor} \leq H(D_{N,e}) \leq c_{\ref{8cons_proof_haut_DN2}} \theta^{e \lfloor \alpha^N \rfloor}$ which allows us to conclude since $\theta^{e {\alpha^N} -e } \leq \theta^{e \lfloor \alpha^N \rfloor} \leq \theta^{e \alpha^N}$.

\end{proof}

\begin{lem}\label{8lem_Angle_psie_DN}
There exists a constant $\cons \label{8cons_major_psie_A_DNe}>0$ independent of $N$ such that:
\begin{align*}
 \psi_e(A,D_{N,e}) \leq c_{\ref{8cons_major_psie_A_DNe}} H(D_{N,e})^{-\alpha/e}
\end{align*}
for any $N$ sufficiently large.
\end{lem}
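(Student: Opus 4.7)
The plan is to reduce the $e$-dimensional angle between $A$ and $D_{N,e}$ to a sum of one-dimensional angles, each of which we already control by $(\ref{8major_Yj_XNj})$, and then convert the resulting exponential upper bound into a power of $H(D_{N,e})$ using the height estimate from Lemma~\ref{8lem_hauteur_DN}.

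First, introduce the auxiliary $e$-dimensional subspace $A' = \Span(Y_1,\ldots,Y_e) \subset A$. Since $D_{N,e}$ also has dimension $e$ and $\dim A' = e \leq d = \dim A$, the corollary of Lemma~12 of \cite{Schmidt} yields the monotonicity
\[
\psi_e(A, D_{N,e}) \leq \psi_e(A', D_{N,e}).
\]
Now $A'$ and $D_{N,e}$ have the same dimension $e$, and the spanning families $(Y_j)_{1 \le j \le e}$ and $(X_N^j)_{1 \le j \le e}$ are in natural correspondence. I invoke Theorem~1.2 of \cite{joseph_spectre} exactly as in the proof of Lemma~\ref{8lem_psid_ACNe}, to obtain a constant $c>0$ depending only on $Y_1, \ldots, Y_e$ and $n$ such that
\[
\psi_e(A', D_{N,e}) \leq c \sum_{j=1}^{e} \psi_1(\Span(Y_j), \Span(X_N^j)).
\]

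Each summand is estimated by $(\ref{8major_Yj_XNj})$: $\psi_1(\Span(Y_j), \Span(X_N^j)) \leq c_{\ref{8cons_major_angle_Yj_XNj}} \theta^{-\alpha^{N+1}}$. Combining the three displays gives, for some constant $c'>0$ independent of $N$,
\[
\psi_e(A, D_{N,e}) \leq c' \theta^{-\alpha^{N+1}} = c' \theta^{-\alpha \cdot \alpha^N}.
\]

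Finally, Lemma~\ref{8lem_hauteur_DN} gives $H(D_{N,e}) \leq c_{\ref{8cons_major_haut_DN}} \theta^{e \alpha^N}$, so
\[
\theta^{-\alpha \cdot \alpha^N} = \bigl(\theta^{e \alpha^N}\bigr)^{-\alpha/e} \leq c_{\ref{8cons_major_haut_DN}}^{\alpha/e}\, H(D_{N,e})^{-\alpha/e}.
\]
Setting $c_{\ref{8cons_major_psie_A_DNe}} = c' c_{\ref{8cons_major_haut_DN}}^{\alpha/e}$ yields the claimed inequality. No step is genuinely difficult: the only point requiring attention is the direction of monotonicity in Schmidt's lemma, since we are comparing two subspaces of different dimensions via an intermediate $e$-dimensional subspace $A'$. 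The rest is a routine combination of previously established estimates.
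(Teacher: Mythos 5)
Your proposal is correct and follows essentially the same route as the paper: reduce to $\Span(Y_1,\ldots,Y_e)\subset A$ by monotonicity of the angle, bound $\psi_e$ of the two equal-dimensional spans by the sum of the angles $\psi_1(\Span(Y_j),\Span(X_N^j))$, then apply \eqref{8major_Yj_XNj} and the height estimate of Lemma~\ref{8lem_hauteur_DN}. The only difference is cosmetic: for the sum-of-angles step the paper invokes Lemma~6.1 of \cite{joseph_exposants} (stated for the $e$-dimensional setting) rather than Theorem~1.2 of \cite{joseph_spectre}, but the argument is otherwise identical.
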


\begin{proof}
We recall that for $j \in \llbracket 1,d \rrbracket$ and $N \in \N$, we have
 $\psi_1(\Span(Y_j), \Span(X_{N}^j)) \leq c_{\ref{8cons_major_angle_Yj_XNj}} \theta^{-\alpha^{N+1}}$
according to $(\ref{8major_Yj_XNj})$.
Since $\Span(Y_1,\ldots, Y_e) \subset A $, it follows that $ \psi_e(A,D_{N,e}) \leq \psi_e(\Span(Y_1,\ldots, Y_e), D_{N,e})$ and according to Lemma 6.1 of \cite{joseph_exposants} we have 
$ \psi_e(\Span(Y_1,\ldots, Y_e), D_{N,e}) \leq c_{\ref{8cons_maj_prop45}} \sum\limits_{j=1}^{e} \psi_1(\Span(Y_j), \Span(X_{N}^j) ) $
with $\cons \label{8cons_maj_prop45} >0 $ depending only on $Y_1, \ldots, Y_e $ and $n$.
Thus,
\begin{align*}
 \psi_e(A, D_{N,e}) \leq c_{\ref{8cons_maj_prop45}} c_{\ref{8cons_major_angle_Yj_XNj}} e \theta^{-{\alpha^{N +1 }} }\leq c_{\ref{8cons_maj_prop45}} c_{\ref{8cons_major_angle_Yj_XNj}}d c_{\ref{8cons_major_haut_DN}}^{\frac{{\alpha^{N+1 }}}{e{\alpha^N} }} H(D_{N,e})^{\frac{-{\alpha^{N +1 }}}{e{\alpha^N} }} 
= c_{\ref{8cons_major_psie_A_DNe}} H(D_{N,e})^{-{\alpha}/{e }} 
 \end{align*}
 where $c_{\ref{8cons_major_psie_A_DNe}} = c_{\ref{8cons_maj_prop45}} c_{\ref{8cons_major_angle_Yj_XNj}}d c_{\ref{8cons_major_haut_DN}}^{{\alpha}/{e}} $ and because $H(D_{N,e}) \leq c_{\ref{8cons_major_haut_DN}} \theta^{e {\alpha^N}} $ according to Lemma~\ref{8lem_hauteur_DN}.
 
\end{proof}

Therefore, we have constructed an infinite sequence of rational subspaces $D_{N,e}$ of dimension $e$ such that 
$ \psi_e(A,D_{N,e}) \leq c_{\ref{8cons_major_psie_A_DNe}} H(D_{N,e})^{-\alpha/e}$ ;
this implies in particular $ \mu_n(A|e)_e \geq \frac{\alpha}{e}.$

\subsection{Upper bound of the exponent}
The aim of this section is to establish an upper bound on $\mu_n(A|e)_e$. We first give a necessary condition on the subspaces of best approximation of $A$.

\begin{lem}\label{8lem_intersection_DN_C}
 Let $\varepsilon > 0$ and $C$ be a rational subspace of dimension $e$ such that $\psi_e(A,C) \leq H(C)^{-\frac{\alpha}{e} - \varepsilon}.$
 Then, if $H(C)$ is large enough, there exist $N \in \N$ and $Z \in \Z^n \setminus \{0\}$ such that $Z \in C \cap D_{N,d}$ and 
 \begin{align*}
 \| Z \| \leq c_{\ref{8cons_enonce_lem_minko}}H(C)^{{1}/{e}}
 \end{align*}
 with $\cons \label{8cons_enonce_lem_minko} >0$ independent of $Z$ and $N$.
\end{lem}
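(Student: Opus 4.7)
The strategy is to apply Minkowski's first theorem to the Euclidean lattice $C \cap \Z^n$ inside $C$ (which has rank $e$ and covolume $H(C)$) to produce a short nonzero integer vector $Z$, and then to choose $N$ so as to force $Z \in D_{N,d}$ via an integrality argument on the wedge $Z \wedge X_N^1 \wedge \cdots \wedge X_N^d$. Concretely, Minkowski yields $Z \in C \cap \Z^n \setminus \{0\}$ with $\|Z\| \leq c_{\ref{8cons_enonce_lem_minko}} H(C)^{1/e}$, which already establishes the norm bound of the lemma.

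The wedge $Z \wedge X_N^1 \wedge \cdots \wedge X_N^d$ has integer coordinates (they are $(d+1)\times(d+1)$ minors of the integer matrix with columns $Z, X_N^1, \ldots, X_N^d$), so if its Euclidean norm is strictly less than $1$, it must vanish and $Z$ then lies in $\Span(X_N^1, \ldots, X_N^d) = D_{N,d}$. Using the identity
\[ \|Z \wedge X_N^1 \wedge \cdots \wedge X_N^d\| = \psi_1(\Span(Z), D_{N,d}) \cdot \|Z\| \cdot H(D_{N,d}), \]
together with $H(D_{N,d}) \leq c\, \theta^{d\alpha^N}$ (the analogue of Lemma~\ref{8lem_hauteur_DN} with $d$ in place of $e$), it suffices to control $\psi_1(\Span(Z), D_{N,d})$.

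For this, approximate $Z$ by an element of $D_{N,d}$: writing $p_A(Z) = \sum_j \mu_j Y_j$ with $|\mu_j| \leq c\|Z\|$, take $\tilde Z := \sum_j \mu_j Z_N^j \in D_{N,d}$. Since $\tilde Z \in D_{N,d}$, one has $\psi_1(\Span(Z), D_{N,d}) \leq \|Z-\tilde Z\|/\|Z\|$, and splitting $\|Z - \tilde Z\| \leq \|p_{A^\perp}(Z)\| + \|p_A(Z) - \tilde Z\|$ yields, via $\|Y_j - Z_N^j\| \leq c\, \theta^{-\alpha^{N+1}}$ from~\eqref{8major_Yj_XNj},
\[ \psi_1(\Span(Z), D_{N,d}) \leq \psi_1(\Span(Z), A) + c\, \theta^{-\alpha^{N+1}}. \]
A principal-angles decomposition of the pair $(A, C)$ gives $\psi_1(\Span(Z'), A) \leq \sqrt{e}\, \psi_e(A, C)$ for every $Z' \in C \setminus \{0\}$, so under the hypothesis, $\psi_1(\Span(Z), A) \leq \sqrt{e}\, H(C)^{-\alpha/e - \varepsilon}$. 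Combining the estimates yields
\[ \|Z \wedge X_N^1 \wedge \cdots \wedge X_N^d\| \leq c\bigl( H(C)^{(1-\alpha)/e - \varepsilon}\, \theta^{d\alpha^N} + H(C)^{1/e}\, \theta^{-\alpha^N(\alpha - d)} \bigr). \]

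Each of the two terms on the right is less than $1/2$ (for $H(C)$ large enough) exactly when $\alpha^N$ lies in an explicit window of the form $[c_1 \log_\theta H(C),\, c_2 \log_\theta H(C)]$ with $c_1 = 1/(e(\alpha-d))$ and $c_2 = (\alpha - 1 + e\varepsilon)/(ed)$. The main obstacle is ensuring that an integer $N$ can indeed be chosen with $\alpha^N$ in this window for every $H(C)$ sufficiently large; this amounts to $c_2/c_1 \geq \alpha$, i.e., $(\alpha - 1 + e\varepsilon)(\alpha - d) \geq \alpha d$, which is a direct consequence of inequality~\eqref{8lem_inegal_alpha3} in the definition of $C_d$. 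Once such an $N$ is fixed, the wedge vanishes and therefore $Z \in D_{N,d}$, which concludes the proof.
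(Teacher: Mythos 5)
Your proposal is correct and follows essentially the same route as the paper: Minkowski's theorem on $C\cap\Z^n$, the integrality of $Z\wedge X_N^1\wedge\cdots\wedge X_N^d$, the angle triangle inequality splitting $\psi_1(\Span(Z),D_{N,d})$ into $\psi_1(\Span(Z),A)+O(\theta^{-\alpha^{N+1}})$, and a choice of $N$ whose feasibility reduces exactly to inequality~\eqref{8lem_inegal_alpha3}. The only cosmetic differences are that the paper invokes a cited lemma to get $\psi_1(\Span(Z),A)\leq\psi_e(C,A)$ (no $\sqrt{e}$ needed) and fixes $N$ by the explicit condition~\eqref{8defN} rather than your window-ratio formulation, which amounts to the same thing (note that the strict margin $c_2/c_1>\alpha$ needed to absorb the bounded multiplicative constants is supplied by $\varepsilon>0$).
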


\begin{proof}
According to Minkowski's theorem (see \cite[Lemma $4B$]{Schmidt_book}), there exists $Z \in C \cap \Z^n \setminus \{0\}$ such that:
\begin{align}\label{8vecteur_Z}
 \|Z\| \leq c_{\ref{8cons_minko2}} H(C)^{{1}/{e}}
\end{align}
with $\cons \label{8cons_minko2} >0$ a constant independent of $Z$. It remains to show that there exists some $N$ such that $Z \in D_{N,d} = \Span(X_N^1, \ldots, X_N^d)$. Let $Z^A$ be the orthogonal projection of $Z$ onto $A$. We introduce $a_1, \ldots, a_d \in \R$ such that
$ Z^A = \sum\limits_{j = 1}^d a_j Y_j.$
We seek to show that there exists $N$ such that $\|Z \wedge X_N^1 \wedge \ldots \wedge X_N^d\|$ vanishes. Since $X_N^1, \ldots, X_N^d$ form a $\Z$-basis of $D_{N,d} \cap \Z^n$ by Lemma~\ref{8lem_hauteur_DN}, we have, for $N \in \N$:
\begin{align}
 \|Z \wedge X_N^1 \wedge \ldots \wedge X_N^d\| &= \psi_1(\Span(Z), D_{N,d}) H(D_{N,d}) \|Z\| \nonumber\\
 & \leq \omega(Z, \sum\limits_{j = 1}^d a_j X_N^j) H(D_{N,d}) \|Z\| \nonumber\\
 & \leq \left(\omega(Z, Z^A) + \omega(Z^A, \sum\limits_{j = 1}^d a_j X_N^j) \right) H(D_{N,d}) \|Z\| \label{8angle3}.
\end{align}
Recalling the notation $Z_N^j = \theta^{-\floor{\alpha^N}} X_N^j$, we have:
\begin{align*}
 \omega(Z^A, \sum\limits_{j = 1}^d a_j X_N^j) = \omega(Z^A, \sum\limits_{j = 1}^d a_j Z_N^j) \leq \frac{\left\|Z^A - \sum\limits_{j = 1}^d a_j Z_N^j \right\|}{\|Z^A\|} 
 \leq \left\| \sum\limits_{j = 1}^d a_j Y_j \right\|^{-1} \sum\limits_{j = 1}^d |a_j| \left\|Y_j - Z_N^j \right\|.
\end{align*}
By construction of the $Y_j$, we have $\left\| \sum\limits_{j = 1}^d a_j Y_j \right\| \geq \sqrt{\sum\limits_{j=1}^d |a_j|^2} \geq \sum\limits_{j = 1}^d \frac{|a_j|}{d}$. Indeed, for $i \in \llbracket 1,d \rrbracket$, the $i$-th coordinate of $Y_j$ is equal to $1$ if $i = j$ and $0$ otherwise. Furthermore, we have $\|Y_j - Z_N^j \| \leq \|Y_j\| c_{\ref{8cons_major_angle_Yj_XNj}} \theta^{-{\alpha^{N+1}}}$ by $(\ref{8major_Yj_XNj})$. Therefore, we have:
\begin{align}\label{8angle_ZA_DN}
 \omega(Z^A, \sum\limits_{j=1}^d a_j X_N^j) \leq c_{\ref{8cons_angle_ZA_DN}} \theta^{-{\alpha^{N+1}}}
\end{align}
with $\cons \label{8cons_angle_ZA_DN} = {c_{\ref{8cons_major_angle_Yj_XNj}}}{d} \max\limits_{j \in \llbracket 1,d \rrbracket} \|Y_j\| >0$.
On the other hand, using Lemma 2.3 of \cite{joseph_spectre} since $Y_1 \in A \setminus \{0\}$ and $\dim(C) = e$, we have 
 $\omega(Z, Z^A) = \psi_1(\Span(Z), A) \leq \psi_e(C, A).$
The hypothesis of the lemma then gives:
\begin{align}\label{8angle_ZA_C}
 \omega(Z, Z^A) \leq H(C)^{ -\frac{\alpha}{e} - \varepsilon}.
\end{align}
Combining (\ref{8angle_ZA_DN}) and (\ref{8angle_ZA_C}) with (\ref{8angle3}), we get:
$ \| Z \wedge X_N^1 \wedge \ldots \wedge X_N^d \| \leq \left( H(C)^{- \frac{\alpha}{e} - \varepsilon} + c_{\ref{8cons_angle_ZA_DN}} \theta^{- \alpha^{N+1}} \right) H(D_{N,d}) \| Z \|.$
Additionally, by Lemma~\ref{8lem_hauteur_DN}, we know that $H(D_{N,d}) \leq c_{\ref{8cons_major_haut_DN}} \theta^{d \alpha^N}$ and by (\ref{8vecteur_Z}), we have $\| Z \| \leq c_{\ref{8cons_minko2}} H(C)^{\frac{1}{e}}$. Thus, for $N \in \N$, we get:
\begin{align*}
 \| Z \wedge X_N^1 \wedge \ldots \wedge X_N^d \| \leq \left( H(C)^{- \frac{\alpha}{e} - \varepsilon} + c_{\ref{8cons_angle_ZA_DN}} \theta^{- \alpha^{N+1}} \right) c_{\ref{8cons_major_haut_DN}} \theta^{d \alpha^N} c_{\ref{8cons_minko2}} H(C)^{\frac{1}{e}} \leq c_{\ref{8cons_major_final_ext_preuve}} \left( H(C)^{- \frac{\alpha - 1}{e} - \varepsilon} \theta^{d \alpha^N} + \theta^{\alpha^N(d - \alpha)} H(C)^{\frac{1}{e}} \right)
\end{align*}
with $\cons \label{8cons_major_final_ext_preuve} = c_{\ref{8cons_major_haut_DN}} c_{\ref{8cons_minko2}} \max(1, c_{\ref{8cons_angle_ZA_DN}}) >0$ independent of $C$ and $N$. We now choose $N$ as the integer such that:
\begin{align}\label{8defN}
 \theta^{d \alpha^N} \leq H(C)^{\frac{\alpha - 1}{e} + \frac{\varepsilon}{2}} < \theta^{d \alpha^{N+1}}.
\end{align}
This gives $\theta^{d \alpha^N} \leq H(C)^{\frac{\alpha - 1}{e} + \frac{\varepsilon}{2}}$ and $\theta > H(C)^{\frac{\frac{\alpha - 1}{e} + \frac{\varepsilon}{2}}{d \alpha^{N+1}}}$ hence, since $\alpha > d$:
\begin{align}
 \| Z \wedge X_N^1 \wedge \ldots \wedge X_N^d \| &\leq c_{\ref{8cons_major_final_ext_preuve}} \left( H(C)^{- \frac{\varepsilon}{2}} + H(C)^{\alpha^N (d - \alpha) \left( \frac{\frac{\alpha - 1}{e} + \frac{\varepsilon}{2}}{d \alpha^{N+1}} \right) + \frac{1}{e}} \right) \nonumber \\
 &\leq c_{\ref{8cons_major_final_ext_preuve}} \left( H(C)^{- \frac{\varepsilon}{2}} + H(C)^{\frac{(d - \alpha)(2 \alpha - 2 + e \varepsilon) + 2 d \alpha}{2 e d \alpha}} \right) \label{8prod_exterieur_Z_2HC}.
\end{align}
We now study the exponent of the second term:
$ \frac{(d-\alpha)(2\alpha - 2 + e\varepsilon) + 2d \alpha}{2ed\alpha} = \frac{1}{ed\alpha}\left(-\alpha^2 + (1 + 2d)\alpha - d \right) - \frac{\alpha - d}{2d\alpha}\varepsilon.$ \\
According to inequality (\ref{8lem_inegal_alpha3}), we have $-\alpha^2 + (1 + 2d)\alpha - d \leq 0$, and thus
 $\frac{(d-\alpha)(2\alpha - 2 + e\varepsilon) + 2d\alpha}{2ed\alpha} \leq - c_{\ref{8cons_major_epsilon}}\varepsilon$
with $\cons \label{8cons_major_epsilon} = \frac{\alpha - d}{2d\alpha} > 0$ since $\alpha > d$.
Thus, inequality (\ref{8prod_exterieur_Z_2HC}) becomes
 $\| Z \wedge X_N^1 \wedge \ldots \wedge X_N^d \| \leq c_{\ref{8cons_major_final_ext_preuve}} \left(H(C)^{- \frac{\varepsilon}{2}} + H(C)^{- c_{\ref{8cons_major_epsilon}}\varepsilon} \right)$
with $c_{\ref{8cons_major_final_ext_preuve}}$ and $c_{\ref{8cons_major_epsilon}}$ independent of $C$. For sufficiently large $H(C)$, we have $\| Z \wedge X_N^1 \wedge \ldots \wedge X_N^d \| _{\infty} \leq \| Z \wedge X_N^1 \wedge \ldots \wedge X_N^d \| < 1$ and then
\begin{align*}
 Z \in \Span(X_N^1, \ldots, X_N^d) = D_{N,d}
\end{align*}
for $N$ satisfying (\ref{8defN}), which completes the proof.

\end{proof}

We can now prove the result that allows us to provide a lower bound for the exponent.
\begin{lem}\label{8lem_minoration_exposant_e<d}
 Let $ \varepsilon > 0$. For all but a finite number of rational subspaces $C$ of dimension $e$, we have:
 \begin{align*}
 \psi_e(A,C) > H(C)^{- \frac{\alpha}{e} - \varepsilon }.
 \end{align*}
\end{lem}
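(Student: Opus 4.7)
The plan is by contradiction: assume there exist infinitely many rational $C$ of dimension $e$ with $\psi_e(A,C) \leq H(C)^{-\alpha/e - \varepsilon}$, and derive a contradiction when $H(C)$ is sufficiently large. The first goal is to upgrade Lemma~\ref{8lem_intersection_DN_C} to the statement that $C \subset D_{N,d}$ for the integer $N$ defined in (\ref{8defN}).

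To establish $C \subset D_{N,d}$, I would mimic the inductive construction used in the proof of Lemma~\ref{8lem_minoration_exposant}: starting from the vector $Z \in C \cap D_{N,d} \cap \Z^n$ given by Lemma~\ref{8lem_intersection_DN_C}, build linearly independent vectors $U_1, \ldots, U_e \in C \cap D_{N,d} \cap \Z^n$ by induction on $r$. At step $r$, set $G_r = \Span(U_1, \ldots, U_r)$; applying Minkowski's theorem to the projection of $C \cap \Z^n$ onto $G_r^\perp \cap C$ produces an integer vector $X_r \in C \setminus G_r$ with $\|X_r\|$ controlled by $(H(C)/H(G_r))^{1/(e-r)}$. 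A lower bound $H(G_r) \geq c\,\theta^{r\floor{\alpha^N}}$ follows from the $\Z$-basis description of $D_{N,d} \cap \Z^n$ in Lemma~\ref{8lem_dimBN_zbaseBN}. Repeating the argument of Lemma~\ref{8lem_intersection_DN_C} on $X_r$ (showing $\|X_r \wedge X_N^1 \wedge \ldots \wedge X_N^d\| < 1$) yields $X_r \in D_{N,d}$, providing $U_{r+1}$. After $e$ iterations, a dimension count forces $C \subset D_{N,d}$.

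Once $C \subset D_{N,d}$, any $\Z$-basis of $C \cap \Z^n$ consists of integer combinations of $X_N^1, \ldots, X_N^d$. Taking a non-zero $e \times e$ minor in the first $d$ rows of the basis matrix yields $H(C) \geq c \theta^{e\floor{\alpha^N}}$, which translates into $\theta^{\alpha^{N+1}} \leq c' H(C)^{\alpha/e}$. Now for any non-zero $Z = \sum_j k_j X_N^j \in C \cap \Z^n$, choose $j_0 \in \llbracket 1, d \rrbracket$ with $|k_{j_0}| = \max_j |k_j|$; by Lemma~\ref{8lem_unique_kj_pour_i} applied to $N+1$ there is an index $i_0 \in \llbracket 0, qd-1\rrbracket$ such that $u_{N+1}^{(i_0, j_0)} \neq 0$ while $u_{N+k}^{(i_0, j)} = 0$ for all other $(k,j) \in \llbracket 1, q \rrbracket \times \llbracket 1, d \rrbracket$. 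The $(d+1) \times (d+1)$ minor of the matrix $(Z \mid Y_1 \mid \ldots \mid Y_d)$ on rows $\{1, \ldots, d, d+1+i_0\}$ equals up to sign $\theta^{\floor{\alpha^N}} \sum_j k_j (\sigma_{i_0, j, N} - \sigma_{i_0, j})$; its leading term has absolute value $\geq 2|k_{j_0}|\theta^{\floor{\alpha^N} - \floor{\alpha^{N+1}}}$, and the remaining contributions involve inverse powers of $\theta$ of order at least $\alpha^{N+q+1}$ and are negligible for $H(C)$ large. Combined with the inequality $|k_{j_0}| \geq \|Z\|/(c''\theta^{\floor{\alpha^N}})$ coming from the structure of the first $d$ coordinates of $Z$, this yields $\psi_1(\Span(Z), A) \geq c_1 \theta^{-\alpha^{N+1}} \geq C_0 H(C)^{-\alpha/e}$ for some constant $C_0 > 0$.

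Finally, by Lemma~2.3 of \cite{joseph_spectre}, $\psi_e(C, A) \geq \psi_1(\Span(Z), A) \geq C_0 H(C)^{-\alpha/e}$, which contradicts $\psi_e(C, A) \leq H(C)^{-\alpha/e - \varepsilon}$ for $H(C)$ large. The hard part of the proof will be the inductive construction showing $C \subset D_{N,d}$: it requires carrying out the Minkowski-plus-integer-wedge argument of Lemma~\ref{8lem_intersection_DN_C} on the projected lattice, tracking how $G_r$ interacts with the $\Z$-basis of $D_{N,d} \cap \Z^n$, and verifying that the inequalities on $\alpha$ defining $C_d$ (notably (\ref{8lem_inegal_alpha3})) still yield the required estimates in the regime $e < d$.
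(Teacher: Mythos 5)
Your endgame coincides with the paper's key mechanism: write the integer vector as $\sum_j k_j X_N^j$, pick $j_0$ with $|k_{j_0}|$ maximal and $i_0=\phi_{j_0}(N+1)$, and use the minor containing $u_{N+1}^{(i_0,j_0)}\geq 2$ together with Lemma~\ref{8lem_unique_kj_pour_i} (no further non-zero digit before $N+q+1$) to bound the angle to $A$ from below. The gap is in the step everything else rests on: the claim $C\subset D_{N,d}$. You only sketch it (an induction mimicking Lemma~\ref{8lem_minoration_exposant}, with Minkowski applied to the projected lattice and an integer-wedge vanishing argument), and you explicitly leave unverified the estimates that would make it work. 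Concretely, at step $r$ you must show $\|X_r\wedge X_N^1\wedge\cdots\wedge X_N^d\|<1$ with $\|\pi_r(X_r)\|\leq c\,(H(C)/H(G_r))^{1/(e-r)}$, $H(G_r)\geq \theta^{r\floor{\alpha^N}}$, $\psi_d(A,D_{N,d})\leq c\,\theta^{-\alpha^{N+1}}$ and the choice \eqref{8defN}; in the worst case $r=e-1$, $e=d-1$ this boils down to an inequality of the shape $d(d-1)\alpha/(\alpha-1)<\alpha-2$, which does follow from \eqref{8lem_inegal_alpha2} (not from \eqref{8lem_inegal_alpha3}, which you point to), but none of this is carried out in your text. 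As written, the load-bearing step of your argument is a plan, not a proof.

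Moreover the detour is unnecessary, and this is exactly how the paper avoids it: it never proves (nor needs) $C\subset D_{N,d}$, nor the height bound $H(C)\geq c\,\theta^{e\floor{\alpha^N}}$. It keeps only the single vector $Z\in C\cap D_{N,d}\cap\Z^n$ furnished by Lemma~\ref{8lem_intersection_DN_C}, writes $Z=\sum_j v_j X_N^j$ with $v_j\in\Z$ (Lemma~\ref{8lem_hauteur_DN}), and the same kind of minor computation yields $\psi_1(\Span(Z),A)\geq c\,\|Z\|^{-2}\,\theta^{2\floor{\alpha^N}-\floor{\alpha^{N+1}}}$. Since $Z\in C$ one has $\psi_1(\Span(Z),A)\leq\psi_e(C,A)\leq H(C)^{-\alpha/e-\varepsilon}$, and the Minkowski bound $\|Z\|\leq c\,H(C)^{1/e}$ turns the right-hand side into $c'\|Z\|^{-\alpha-e\varepsilon}$; combined with the integrality bound $\|Z\|\geq|z_{j_0}|\geq\theta^{\floor{\alpha^N}}$ this forces $c''\leq\theta^{\floor{\alpha^{N+1}}-(\alpha+e\varepsilon)\floor{\alpha^N}}\to 0$, a contradiction. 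So the two-sided control of $\|Z\|$ already suffices: to complete a proof along your lines you would have to actually carry out the inductive containment argument, whereas the paper's route dispenses with it entirely.
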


\begin{proof}
We argue by contradiction and assume that there exists infinitely many rational subspaces $C$ of dimension $e$ such that: 
\begin{align}\label{8hyp_absurd}
 \psi_e(A,C) \leq H(C)^{- \frac{\alpha}{e} - \varepsilon }.
\end{align}

Let $C$ be such a subspace. Lemma \ref{8lem_intersection_DN_C} then gives $N \in \N$ and $Z \in \Z^n \setminus \{0\}$ such that $Z \in C \cap D_{N,d}$ with
 $\| Z \| \leq c_{\ref{8cons_enonce_lem_minko}}H(C)^{\frac{1}{e}}.$ 
Since $Z \in D_{N,d} \cap \Z^n$, we can write $ Z = \sum\limits_{j = 1}^d v_j X_N^j$
 with $v_j \in \Z$ because the $X_N^j$ form a $\Z$-basis of $D_{N,d} \cap \Z^n$ according to Lemma \ref{8lem_hauteur_DN}.
For $j \in \llbracket 1, d \rrbracket$, let $z_j = \theta^{\floor{\alpha^N}} v_j$, which gives $Z = \sum\limits_{j = 1}^d z_j Z_N^j$. 
We now bound $\psi_1(Z,A)$ from below. Recall the notation $Z^A$ for the orthogonal projection of $Z$ onto $A$. We introduce $a_1, \ldots, a_d \in \R$ such that $Z^A = \sum\limits_{j = 1}^d a_j Y_j.$ We also define
$ \Delta = Z^A - \sum\limits_{j = 1}^d z_j Y_j,$
and $\omega = \| Z^A - Z \|.$
Since the $Z_N^j$ and $Y_j$ have their $i$-th coordinate equal to $1$ if $i = j$ and $0$ otherwise, this gives:
\begin{align*}
 Z^A - Z = \begin{pmatrix} a_1 - z_1 & \cdots & a_d - z_d & \star & \ldots & \star \end{pmatrix}^\intercal.
\end{align*}
We then have 
$ \omega^2 \geq \sum\limits_{j = 1}^d (a_j - z_j)^2,$
and thus for all $j \in \llbracket 1, d \rrbracket$, $| a_j - z_j | \leq \omega$. 
This gives us in particular:
\begin{align}\label{8major_delta}
 \| \Delta \| = \left\| \sum\limits_{j = 1}^d (a_j - z_j) Y_j \right\| \leq c_{\ref{8cons_maj_Delta_par_omega}} \omega
\end{align}
with $\cons \label{8cons_maj_Delta_par_omega} = d \max\limits_{j \in \llbracket 1, d \rrbracket} \| Y_j \| >0$.
We can compute $\|Z \wedge Z^A \| =\|Z \wedge \left( \sum\limits_{j = 1 }^d z_j Y_j + Z^A - \sum\limits_{j = 1 }^d z_j Y_j \right) \| $ and then bound it from below:
\begin{align}
\|Z \wedge Z^A \| = \| Z \wedge \sum\limits_{j = 1 }^d z_j Y_j + Z \wedge \Delta \| \geq \| Z \wedge \sum\limits_{j = 1 }^d z_j Y_j \| - \| Z \wedge \Delta \|. \label{8minoration_1_angleZ}
\end{align}
On the one hand, for all $j_0 \in \llbracket 1,d \rrbracket$ and $i \in \llbracket 0, qd-1 \rrbracket$, recalling the definitions of $Y_j$ and $\sigma_{i,j}$ given in section \ref{section_construction} and considering the $j_0$-th and $(d + i)$-th rows of the matrix $\begin{pmatrix}
 Z & \left| \: \sum\limits_{j= 1 }^d z_j Y_j \right.
\end{pmatrix}$, we have $\| Z \wedge \sum\limits_{j= 1 }^d z_j Y_j \| = \| \sum\limits_{j = 1 }^d z_j Z_N^j \wedge \sum\limits_{j = 1 }^d z_j Y_j \| \geq \left| \det \begin{pmatrix}
 z_{j_0} & z_{j_0} \\
 \sum\limits_{j= 1 }^d z_j \sigma_{i,j}^N & \sum\limits_{j = 1 }^d z_j \sigma_{i,j} 
 \end{pmatrix} \right|$ and thus :
\begin{align*}
\| Z \wedge \sum\limits_{j= 1 }^d z_j Y_j \| \geq |z_{j_0}| \left| \sum\limits_{j = 1 }^d z_j (\sigma_{i,j} - \sigma_{i,j}^N) \right| = |z_{j_0}| \left| \sum\limits_{j = 1 }^d z_j \sum\limits_{k = N+1 }^{+\infty} \frac{u_k^{(i,j)}}{\theta^{\floor{\alpha^k}}} \right|. 
\end{align*}
We choose $j_0$ such that $|z_{j_0}| = \max\limits_{j=1}^d{|z_j|} \neq 0$; such a $j_0$ exists because $Z \neq 0$. 
In particular, we have $|z_{j_0}| = \theta^{\floor{\alpha^N}} |v_{j_0}| \geq \theta^{\floor{\alpha^N}}$ since $v_{j_0} \in \Z$.
We choose $i$ such that $u_{N+1}^{(i,j_0)} \neq 0$, that is $i = \phi_{j_0}(N+1)$ using the notations from section \ref{section_construction}. Then, we have $u_{N+1}^{(i,j_0)} \geq 2$ and $u_{N+1}^{(i,j)} = 0$ for all $j \neq j_0$, hence:
\begin{align*}
 \| Z \wedge \sum\limits_{j= 1 }^d z_j Y_j \| &\geq |z_{j_0}| \left| \sum\limits_{j = 1 }^d z_j \sum\limits_{k = N+1 }^{+\infty} \frac{u_k^{(i,j)}}{\theta^{\floor{\alpha^k}}} \right| \\ 
 &\geq |z_{j_0}| \left| z_{j_0} \frac{u_{N+1}^{(i,j_0)}}{\theta^{\floor{\alpha^{N+1}}}} + \sum\limits_{j = 1 }^d z_j \sum\limits_{k = N+2 }^{+\infty} \frac{u_k^{(i,j)}}{\theta^{\floor{\alpha^k}}} \right| \\
 &\geq |z_{j_0}| \left(|z_{j_0}| \frac{2}{\theta^{\floor{\alpha^{N+1}}}} - d|z_{j_0}| \sum\limits_{k = N+2 }^{+\infty} \frac{u_k^{(i,j)}}{\theta^{\floor{\alpha^k}}} \right) \\
 &\geq |z_{j_0}|^2 \frac{1}{\theta^{\floor{\alpha^{N+1}}}}
\end{align*}
since if $N$ is large enough, we have $d \sum\limits_{k = N+2 }^{+\infty} \frac{u_k^{(i,j)}}{\theta^{\floor{\alpha^k}}} \leq \frac{1}{\theta^{\floor{\alpha^{N+1}}}}$.
We deduce that:
\begin{align}\label{8minorer_prod_ZYj}
 \| Z \wedge \sum\limits_{j= 1 }^d z_j Y_j \| &\geq \frac{\theta^{2\floor{\alpha^{N}}}}{\theta^{\floor{\alpha^{N+1}}}}.
\end{align}
On the other hand, according to (\ref{8major_delta}), we have:
\begin{align}\label{8majorer_prod_Zomega}
 \| Z \wedge \Delta \| \leq \|Z\| \|\Delta \| \leq c_{\ref{8cons_maj_Delta_par_omega}} \omega \|Z \|.
\end{align} 
Combining (\ref{8minoration_1_angleZ}) with (\ref{8minorer_prod_ZYj}) and (\ref{8majorer_prod_Zomega}), we find $ \|Z \wedge Z^A \| \geq \frac{\theta^{2\floor{\alpha^{N}}}}{\theta^{\floor{\alpha^{N+1}}}} - c_{\ref{8cons_maj_Delta_par_omega}} \omega \|Z\|.$
We have $\omega = \| Z^A - Z \| = \| Z \| \omega(Z^A,Z)$ and thus, since $\|Z^A \| \leq \|Z \|$, we get:
\begin{align*}
 \omega = \| Z \| \frac{\| Z^A \wedge Z \|}{\| Z \| \cdot \|Z^A \|} 
 \geq \frac{\| Z^A \wedge Z \|}{\| Z \| } \geq \frac{\theta^{2\floor{\alpha^{N}}}}{\|Z\|\theta^{\floor{\alpha^{N+1}}}} - c_{\ref{8cons_maj_Delta_par_omega}} \omega .
\end{align*}
Finally, we have
 $\omega \geq \frac{c_{\ref{8cons_minor_omega}}}{\|Z \|\theta^{\floor{\alpha ^{N+1}} -2 \floor{\alpha^N}}}$
with $\cons \label{8cons_minor_omega} = (1 +c_{\ref{8cons_maj_Delta_par_omega}})^{-1} >0 $.
We also have $\omega(Z, Z^A)=\frac{\omega}{\|Z\|} \geq \frac{c_{\ref{8cons_minor_omega}}}{\|Z \|^2\theta^{\floor{\alpha ^{N+1}} - 2 \floor{\alpha^N}}},$
where $c_{\ref{8cons_minor_omega}} > 0$ is independent of $N$ and $Z$. 
Recalling that $Z \in C$, and thus $ \omega(Z, Z^A) = \psi_1(\Span(Z),A) \leq \psi_e(C,A)$, by (\ref{8hyp_absurd}) we have:
\begin{align}\label{8majorationderniereZ}
 \frac{c_{\ref{8cons_minor_omega}}}{\|Z \|^2\theta^{\floor{\alpha ^{N+1}} - 2 \floor{\alpha^N}}} \leq H(C)^{-\frac{\alpha}{e} - \varepsilon}.
\end{align}
Moreover, we have chosen $Z$ such that $\|Z\| \leq c_{\ref{8cons_enonce_lem_minko}}H(C)^{{1}/{e}}$. Hence, inequality (\ref{8majorationderniereZ}) becomes\\
$ \frac{c_{\ref{8cons_minor_omega}}}{c_{\ref{8cons_enonce_lem_minko}}^{\alpha+ e\varepsilon }\|Z \|^2\theta^{\floor{\alpha ^{N+1}} - 2 \floor{\alpha^N}}} \leq \|Z\|^{-{\alpha} - e\varepsilon}.$
In particular, we have:
\begin{align}\label{8derniere_contradiction}
 \frac{c_{\ref{8cons_derniere_contradiction}}}{\|Z \|^2\theta^{\floor{\alpha ^{N+1}} - 2 \floor{\alpha^N}}} \leq \|Z\|^{-{\alpha} - e\varepsilon} 
\end{align}
with $ \cons \label{8cons_derniere_contradiction} = c_{\ref{8cons_minor_omega}}c_{\ref{8cons_enonce_lem_minko}}^{-\alpha-e\varepsilon } >0$.
On the other hand, by the construction of $Z_N^j$, we have:
 $\forall j \in \llbracket 1,d \rrbracket, \quad \|Z\| \geq |z_j|.$
Indeed, for $i \in \llbracket 1,d \rrbracket$, the $i\tir$th coordinate of $Z_N^j$ is equal to $1$ if $i =j$ and $0$ otherwise. In particular, we have $\|Z \| \geq |z_{j_0}| \geq \theta^{\floor{\alpha ^N}}$ since $z_{j_0} \neq 0$. Combining this with (\ref{8derniere_contradiction}), we find $ c_{\ref{8cons_derniere_contradiction}} \leq \theta^{-\floor{\alpha ^N}(\alpha + e\varepsilon - 2) + \floor{\alpha ^{N+1}} - 2 \floor{\alpha^N}}$
 with, as a reminder, $c_{\ref{8cons_derniere_contradiction}} >0 $ being a constant independent of $N$. We have:
\begin{align*}
-\floor{\alpha ^N}(\alpha + e\varepsilon - 2) + \floor{\alpha ^{N+1}} - 2 \floor{\alpha^N} = \floor{\alpha ^{N+1}} -\floor{\alpha ^N}(\alpha + e\varepsilon) 
\underset{N \to + \infty}{\longrightarrow} - \infty.
\end{align*}
If $H(C)$ is large, then $N$ is also large by (\ref{8choixN}). Therefore, as $H(C)$ tends to $+\infty$, we find $c_{\ref{8cons_derniere_contradiction}} = 0$, which is contradictory and completes the proof of the lemma.

\end{proof}

Lemma \ref{8lem_minoration_exposant_e<d} then yields for $e \in \llbracket 1,d-1 \rrbracket $,
 $\mu_n(A|C)_e \leq \frac{\alpha}{e}.$
Thus, we have proven Theorem \ref{8theo_dernier_angle} in the case where $ e < d$.

\bigskip

\textbf{Acknowledgements:} I warmly thank Stéphane Fischler for his reading and comments of this paper.

\bibliographystyle{smfplain.bst}
\bibliography{bibliographie.bib}

Gaétan Guillot, Université Paris-Saclay, CNRS, Laboratoire de mathématiques d’Orsay, 91405 Orsay, France.
\\ {Email adress:} guillotgaetan1@gmail.com 
\bigskip

MSC: 11J13, 11H25
\end{document}